\newtheorem{theorem}{Theorem}[section]
\newtheorem{lemma}[theorem]{Lemma}
\newtheorem{prop}[theorem]{Proposition}
\theoremstyle{definition}
\newtheorem{definition}[theorem]{Definition}
\theoremstyle{remark}
\newtheorem*{remark}{Remark}
\numberwithin{equation}{section}
\let\cal\mathcal
\let\bb\mathbb
\newcommand{\DF}{\mathcal{E}}
\def\octagon[#1](#2:#3)(#4){%
  % Synopsis
  % \octagon[draw options](lowerleftcornerx:lowerleftcornery)(scale)
  \draw [#1] (#2 + 0.70711*#4, #3) -- (#2 + 1.70711*#4, #3) -- (#2 + 2.41421*#4, #3 + 0.70711*#4) -- (#2 + 2.41421*#4, #3 + 1.70711*#4) -- (#2 + 1.70711*#4, #3 + 2.41421*#4) -- (#2 + 0.70711*#4, #3 + 2.41421*#4) -- (#2, #3 + 1.70711*#4) -- (#2, #3 + 0.70711*#4) -- (#2 + 0.70711*#4, #3); }
\newcommand{\radvertex}[2]{\draw [fill] (#1) circle [radius = #2];}
\title{Resistance Scaling on $4N$-Carpets }
\author[Canner]{Claire Canner}
\address{Claire Canner\\Rochester Institute of Technology}
\email{clairemcanner@gmail.com}
\author[Hayes]{Christopher Hayes}
\address{Christopher Hayes\\Department of Mathematics\\University of Connecticut\\Storrs, CT 06269-1009\\U.S.A.}
\email{christopher.k.hayes@uconn.edu}
\author{Shinyu Huang}
\address{Shinyu Huang\\Williams College}
\email{wsh1@williams.edu}
\author{Michael Orwin}
\address{Michael Orwin\\ Kalamazoo College}
\email{orwinmc@gmail.com}
\author{Luke~G. Rogers.}
\address{Luke~G. Rogers\\Department of Mathematics\\University of Connecticut\\Storrs, CT 06269-1009\\U.S.A.}
\email{luke.rogers@uconn.edu}
\keywords{Resistance, Fractal, Fractal carpet, Dirichlet form, Walk dimension, Spectral dimension}
\subjclass{Primary: 28A80, 31C25, 31E05. Secondary: 31C15, 60J65}
\thanks{Work supported by NSF DMS REU 1659643}
\begin{document}
\begin{abstract}
The $4N$ carpets are a class of infinitely ramified self-similar fractals with a large group of symmetries.   For a $4N$-carpet $F$, let $\{F_n\}_{n \geq 0}$ be the natural decreasing sequence of compact pre-fractal approximations with $\cap_nF_n=F$. On each $F_n$, let $\cal E(u, v) = \int_{F_N} \nabla u \cdot \nabla v \, dx$ be the classical Dirichlet form and $u_n$ be the unique harmonic function on $F_n$ satisfying a mixed boundary value problem corresponding to assigning a constant potential between two specific subsets of the boundary.  Using a method introduced by Barlow and Bass~\cite{BB1}, we prove a resistance estimate of the following form: there is $\rho=\rho(N) > 1$ such that $\cal E(u_n, u_n)\rho^{n}$ is bounded above and below by constants independent of $n$. Such estimates have implications for the existence and scaling properties of Brownian motion on $F$.
\end{abstract}

\maketitle

\section{Introduction}

The $4N$ carpets are  a class of self-similar fractals related to the classical Sierpi\'nski Carpets.  They are defined by a finite set of similitudes with a single contraction ratio, are highly symmetric, and are post-critically infinite.  Two examples, the octacarpet ($N=2$) and dodecacarpet ($N=3$) are shown in Figure~\ref{fig:oct-dodeca}. We do not consider the case $N=1$ which is simply a square.  

\begin{figure}
\centering
  \includegraphics[width=0.3\linewidth]{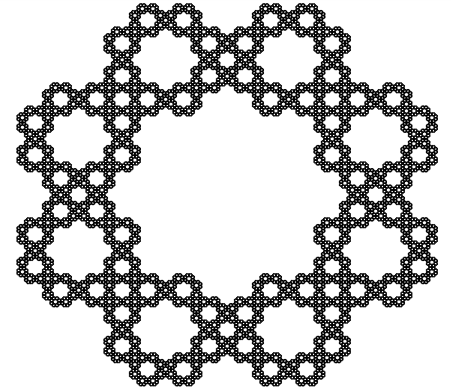} \ \ \ \ \ \    \includegraphics[width=0.3\linewidth]{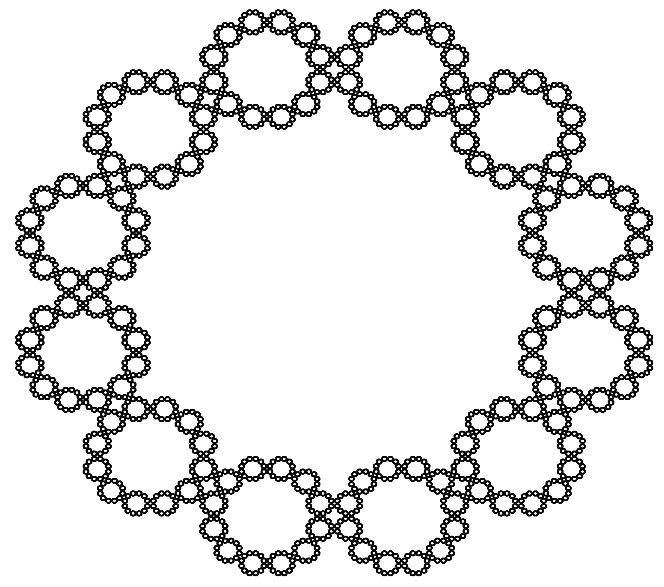}
\caption{The octacarpet $(N=2)$ and dodecacarpet $(N=3)$ are $4N$-Carpets.}
\label{fig:oct-dodeca}
\end{figure}

The construction of $4N$ carpets is as follows; illustrations for $N=2$ are in Figure~\ref{fig:octconstr}.  Fix $N\geq2$, let $\Lambda(N)=\{0,\dotsc, 4N-1\}$ and $C_j(N)=\exp\frac{(2j-1)i\pi}{4N}\in\mathbb{C}$. Let $F_0$ denote the convex hull of $\{C_j(N),j\in\Lambda(N)\}$.  Consider contractions $\phi_j(x) =r(x-C_j)+C_j$ where the ratio $r=r(N)=(1+\cot(\pi/4N))^{-1}$ is chosen so $\phi_j(F_0)\cap\phi_k(F_0)$ is a line segment.  For a set $A$ define $\Phi(A)=\cup_{j=0}^{4N-1}\phi_j(A)$ and let $\Phi^n$ denote the $n$-fold composition. $\Phi$ is a contraction on the space of non-empty compact sets in $\bb C$ with the Hausdorff metric (\cite{Kigami}, pg. 11). Then let  $F_n=\Phi^n(F_0)$ and $F=\cap_n F_n$ be the unique non-empty compact set such that $\Phi(F)=F$ (see~\cite{Hutch81}).  We call $F$ the $4N$-carpet. Since one may verify the Moran open set condition is valid for the interior of $F_0$, \cite[Theorem~5.3(2)]{Hutch81} implies its Hausdorff dimension is $d_f=-\log4N/\log r(N)=\log 4N/\log (1+\cot(\pi/4N))$.

\begin{figure}
\centering
\includegraphics[width=.3\linewidth]{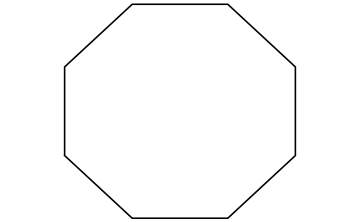} \ \ 
\includegraphics[width=.3\linewidth]{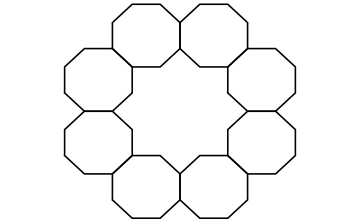} \ \ 
 \includegraphics[width=.3\linewidth]{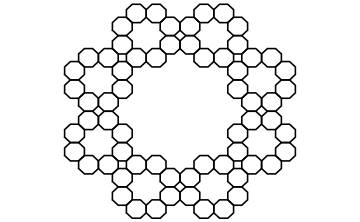}
\caption{The pre-carpets $F_0$, $F_1$ and $F_2$ for the octacarpet ($N=2$).}\label{fig:octconstr}
\end{figure}

This paper is concerned with a physically-motivated problem connected to the resistance of the $4N$ carpet, and is closely related to well-known results of Barlow and Bass~\cite{BB1}.  To state it we need some further notation.  Writing subindices modulo $4N$, let $L_j$ be the line segment from $C_j$ to $C_{j+1}$ (these are shown  for the case $N=2$ in the left diagram in Figure~\ref{fig:currentconstruction}).  Then define
\begin{align}\label{eqn:AnBn}
	 A_n=F_n\cap\Bigl(\cup_{k=0}^{N-1} L_{4k}\Bigr) && B_n=F_n\cap\Bigl(\cup_{k=0}^{N-1} L_{4k+2}\Bigr)
 	\end{align}
These sets are shown for the case of the dodecacarpet ($N=3$) in Figure~\ref{fig:AmBm}.

Supposing $F_n$ to be constructed from a thin, electrically conductive sheet let $R_n=R_n(N)$ be the effective resistance (as defined in~\eqref{eq:domainresist} below) when the edges of $A_n$ are short-circuited at potential $0$ and those of $B_n$ are short-circuited at potential $1$.

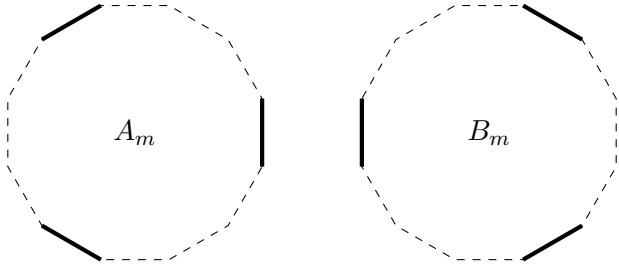
\begin{figure}
	\centering
	\begin{tikzpicture}[scale=1.75, rotate = 330]
	
	\draw[dashed]  (0.9659258, 0.25881904)--(0.70710677, 0.70710677)--(0.25881904, 0.9659258)--(-0.25881904, 0.9659258)--(-0.70710677, 0.70710677)--(-0.9659258, 0.25881904)--(-0.9659258, -0.25881904)--(-0.70710677, -0.70710677)--(-0.25881904, -0.9659258)--(0.25881904, -0.9659258)--(0.70710677, -0.70710677)--(0.9659258, -0.25881904)--(0.9659258, 0.25881904);
	
	\draw [ultra thick] (0.9659258, 0.25881904)--(0.70710677, 0.70710677);
	\draw [ultra thick] (-0.70710677, 0.70710677)--(-0.9659258, 0.25881904); 
	\draw [ultra thick] (-0.25881904, -0.9659258)--(0.25881904, -0.9659258);
	\node at (0, 0) {$A_m$};	
\end{tikzpicture}\hspace{0.5in}\begin{tikzpicture}[scale=1.75, rotate = 150]

\draw[dashed]  (0.9659258, 0.25881904)--(0.70710677, 0.70710677)--(0.25881904, 0.9659258)--(-0.25881904, 0.9659258)--(-0.70710677, 0.70710677)--(-0.9659258, 0.25881904)--(-0.9659258, -0.25881904)--(-0.70710677, -0.70710677)--(-0.25881904, -0.9659258)--(0.25881904, -0.9659258)--(0.70710677, -0.70710677)--(0.9659258, -0.25881904)--(0.9659258, 0.25881904);

\draw [ultra thick] (0.9659258, 0.25881904)--(0.70710677, 0.70710677);
\draw [ultra thick] (-0.70710677, 0.70710677)--(-0.9659258, 0.25881904); 
\draw [ultra thick] (-0.25881904, -0.9659258)--(0.25881904, -0.9659258);
\node at (0, 0) {$B_m$};	
\end{tikzpicture}
	
	\caption{The thick lines indicate $A_m$ and $B_m$ for the dodecacarpet ($N=3)$). }\label{fig:AmBm}
\end{figure}
 
Bounds for $R_n$ have a well-known connection to crossing time estimates for Brownian Motion (see, for example, \cite[Theorem~2.7]{BarlowDiffusionsonFractals}). In the case of the Sierpi\'nski Carpet such estimates play a significant role in the Barlow-Bass approach to establishing properties of the Brownian motion constructed in~\cite{BBExistence}. Specifically, these estimates are used to establish the behavior of the resulting heat kernel $p_t(x,y)$ under space rescaling and hence prove existence of the limit $\lim_{t\to0^+} \frac{\ln p_t(x,x)}{\ln t}$, which is used to define the spectral dimension. See~\cite{BB1,BBTransDensities} for details and~\cite{BBS} for numerical estimates of the spectral dimension via estimation of the resistance scaling. 

There have been considerable developments regarding the Dirichlet form on the  Sierpi\'nski carpet, among which we note the proof of uniqueness~\cite{BBKT}, more general results on the geometry of the spectral dimension~\cite{KajinoSA}, and a proof of existence of the form by a non-probabilistic method~\cite{GrigYang}. The original results of Barlow and Bass~\cite{BB1} on scaling of the resistance for the Sierpi\'nski carpet were extended to more general types of carpets in~\cite{BB99,McGillivray}, and further improved using a different approach in~\cite{KajinoElementaryWD}.  Resistance estimates for the Strichartz hexacarpet are in~\cite{KPBT19}.
 
  With regard to the $4N$ carpets considered here, there are few results in the literature.  For the case $N=2$, which is called either the octacarpet or octagasket, there are some results regarding features one might expect the spectrum of a Laplacian to have, provided that one exists: \cite{StrichartzOuterApprox} contains numerical data and results from Strichartz's ``outer approximation'' method, and~\cite{StrichartzPeanoCurve} has results from a method involving approximation of the set by a Peano curve.    
The results most closely connected to the present work are in the PhD thesis of Ulysses Andrews~\cite{Andrews}, where the Barlow-Bass method is used to prove the existence of a local regular Dirichlet form on $4N$ carpets under several assumptions, one of which is a resistance estimate that follows readily from Theorem~\ref{thm:main} below.
 
Our main result is the following theorem.

\begin{theorem}\label{thm:main}
For fixed $N\geq2$ there is a constant $\rho=\rho(N)$ such that: 

$$\frac9{44N} R_0\rho^n \leq R_n \leq \frac{44N}9 R_0\rho^n$$

\end{theorem}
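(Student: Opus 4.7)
The plan is to follow the Barlow-Bass short/cut methodology, reducing the continuum problem on $F_{n+m}$ to a discrete network on the $(4N)^n$ level-$n$ cells and then invoking a Fekete-type argument. Setting $\rho_n := R_n/R_0$, the central task is to prove approximate multiplicativity:
$$
a\,\rho_n\rho_m \;\le\; \rho_{n+m} \;\le\; b\,\rho_n\rho_m
\quad\text{for all } n,m\ge 0,
$$
with constants $a,b>0$ depending only on $N$. Two-sided bounds of this form, combined with a subadditivity/superadditivity argument applied to $\log\rho_n + \log b$ and $\log\rho_n + \log a$, imply that $\rho:=\lim_n \rho_n^{1/n}$ exists and that $\rho_n/\rho^n$ is pinned between constants comparable to $1/a$ and $1/b$, which yields the theorem with explicit constants traceable to $a$ and $b$.

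For the upper bound in the multiplicativity lemma I would use Rayleigh's monotonicity principle for shorting. The recursion $F_{n+m}=\bigcup_{|w|=n}\phi_w(F_m)$ presents $F_{n+m}$ as the union of $(4N)^n$ scaled copies of $F_m$ glued along line segments. Shorting each shared gluing segment to a single equipotential reduces the variational problem to a resistor network of $F_n$-shape whose ``cell'' impedance is controlled by $R_m$. By the dihedral symmetry of the carpet, each symmetric cell acts as an isotropic two-arc resistor, so the reduced network is comparable to the resistor network underlying the $F_n$-problem itself, yielding $R_{n+m}\le b\,R_nR_m/R_0$.

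The lower bound is obtained dually: cutting the shared gluing segments breaks $F_{n+m}$ into isolated copies of $F_m$ with Dirichlet conditions matched to the macroscopic $A_{n+m}$--$B_{n+m}$ data, which raises the effective resistance. Estimating this disconnected problem by a level-$n$ network, again via the carpet's symmetries, gives $R_{n+m}\ge a\,R_nR_m/R_0$. The explicit constants $9/(44N)$ and $44N/9$ in the theorem arise from the accounting of this step: the factor $4N$ counts the boundary arcs and cells ringing the carpet, while the $9$ encodes the specific single-cell geometric constant that relates the $A$-to-$B$ resistance of one cell to the natural symmetric quantity used in the network reduction.

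The main obstacle will be this short/cut reduction. For the Sierpi\'nski carpet the four-side geometry makes the reduced network essentially planar and easy to analyze; here, $A_n$ and $B_n$ each consist of $N$ non-adjacent arcs of $F_n$, and the equivalence classes of boundary points that may be shorted or cut must respect the full dihedral symmetry group of the $4N$-carpet. Verifying that the reduced discrete networks faithfully model both $R_{n+m}$ and an $F_n$-scale resistance problem, and doing so with the explicit constants above rather than merely up to an unspecified factor, requires careful combinatorial bookkeeping of the boundary potentials at each scale. Once the two-sided multiplicativity is in hand, the Fekete-type iteration producing the geometric scaling is routine.
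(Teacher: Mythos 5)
Your overall architecture --- first prove two-sided approximate multiplicativity $a\,R_nR_m/R_0\le R_{n+m}\le b\,R_nR_m/R_0$ and then run the Fekete sub/superadditivity argument on $\log(aR_n)$ and $\log(bR_n)$ --- is exactly the paper's reduction (Theorem~\ref{thm:mainest} feeding Theorem~\ref{thm:main}), and that last step is indeed routine. The gap is in how you propose to obtain the multiplicativity. First, the monotonicity directions are reversed: shorting the gluing segments can only \emph{decrease} effective resistance, so it can only produce a \emph{lower} bound for $R_{n+m}$, while cutting can only \emph{increase} it and so could only produce an \emph{upper} bound; your paragraph assigns them the other way around, and the sentence ``cutting \dots raises the effective resistance \dots gives $R_{n+m}\ge a\,R_nR_m/R_0$'' is internally inconsistent. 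Second, and fatally, the cutting half cannot work on a $4N$-carpet at all: $A_{n+m}$ and $B_{n+m}$ lie on non-adjacent sides $L_{4k}$ and $L_{4k+2}$, so no cell of level $\ge 1$ meets both, and severing the shared gluing segments disconnects every admissible path from $A_{n+m}$ to $B_{n+m}$; the cut configuration has infinite resistance and the bound is vacuous. This is precisely the obstruction, for infinitely ramified fractals, that forces the Barlow--Bass construction: by Thomson's principle (Theorem~\ref{thm:resistfromcurrent}) the upper bound on $R_{m+n}$ must come from exhibiting an explicit feasible \emph{current}, not from cutting.

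Even the shorting (lower-bound) half is not mere bookkeeping: after shorting, each cell is a multi-terminal element, and comparing the reduced network simultaneously to ``$R_m$ per cell'' and to an $F_n$-scale problem requires a genuine discrete-to-continuum comparison. The paper supplies exactly this. It introduces the graphs $G_m$ and $D_m$ with $R_m^G=2R_m^D$ (Lemma~\ref{lem:RGRD}), and --- the real technical core --- uses the rotation and reflection symmetries to split the optimal current and potential on $F_n$ into sector pieces $V^j,W^j$ and $v^j,w^j$, proving orthogonality and then building (Proposition~\ref{prop:Fncurr}) a current on $F_n$ with arbitrary prescribed fluxes through the three joining sides at energy at most $\tfrac{11}{9}N^2R_n\sum I_j^2$, and (Proposition~\ref{prop:Fnpot}) a potential with prescribed corner values at energy at most $\tfrac2N R_n^{-1}\sum\bigl(u(C_j)-u(0)\bigr)^2$. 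Pasting these cell by cell according to the optimal current on $G_m$ and the optimal potential on $D_m$ yields a feasible current and a feasible potential on $F_{m+n}$, hence both bounds; the constants $9/(44N)$ and $44N/9$ come from these two energy estimates together with the factor $2$ of Lemma~\ref{lem:RGRD}, not from counting boundary arcs. Without an analogue of Propositions~\ref{prop:Fncurr} and~\ref{prop:Fnpot} (and a gluing argument such as Lemma~\ref{lem:potandcurglue} to certify the pasted objects are admissible), your proposal does not close.
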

\begin{proof}
The majority of the work is to establish (in Theorem~\ref{thm:mainest} below) that there are constants $c,C$ so $cR_nR_m\leq R_{n+m}\leq C R_nR_m$. Then $S_n=\log cR_n$ is superadditive and $S_n'=\log CR_n$ is subadditive, so Fekete's lemma implies $\lim_{n\rightarrow\infty}\frac1n S_n =  \sup_n\frac1nS_n$ and $\lim_{n\rightarrow\infty}\frac1nS_n'=\inf_n\frac1nS_n'$.  However $\lim_{n\rightarrow\infty}\frac1n(S_n-S_n') = 0$, so defining  $\log\rho$ to be the common limit we conclude $\frac1nS_n\leq \log\rho\leq\frac1nS_n'$ and thus $cR_n\leq \rho^n\leq CR_n$.  

%The constants $c$ and $C$ in the stated bound are those from Theorem~\ref{thm:mainest}.
\end{proof}

\section{Resistance, flows and currents}
We recall some necessary notions regarding Dirichlet forms on graphs and on Lipschitz domains. Our treatment generally  follows~\cite{BB1}, which in turn refers to~\cite{DoyleSnell} for the graph case.

\subsection{Graphs}
On a finite set of points $G$ suppose we have $g:G\times G\to\mathbb{R}$ satisfying for all $x,y\in G$ that $g(x,y)=g(y,x)$, $g(x,y)\geq0$ and $g(x,x)=0$. We call $g$ a conductance. It defines a Dirichlet form by
\begin{equation*}
	\DF_G(u,u) = \frac12 \sum_{x\in G}\sum_{y\in G} g(x,y) \bigl( u(x)-u(y)\bigr)^2.
	\end{equation*}
For disjoint subsets $A$, $B$ from $G$ the effective resistance between them is $R_G(A,B)$ defined by
\begin{equation}\label{eq:R_G}
	R_G(A,B)^{-1}=\inf \{ \DF_G(u,u): u|_A=0,\ u|_B=1\}.
	\end{equation}
The set of functions in \eqref{eq:R_G} are called feasible potentials. Viewing $G$ as the vertex set of a graph with an edge from $x$ to $y$ when $g(x, y) > 0$ we note that if $G$ is connected then the infimum is attained at a unique potential $\tilde{u}_G$.

A current from $A$ to $B$ is a function $I$ on the edges of the conductance graph, meaning $I:\{x,y:g(x,y)>0\}\to \mathbb{R}$,
with properties: $I(x,y)=-I(y,x)$ for all $x,y$ and $\sum_{y\in G} I(x,y)=0$ if $x\not\in A\cup B$.  It is called a feasible current if it has unit flux, meaning:
$$\sum_{x\in B}\sum_{y\in G} I(x,y) = - \sum_{x\in A}\sum_{y\in G} I(x,y) = 1$$

Note that the first equality is a consequence of the definition of a current. The energy of the current is defined by
\begin{equation*}
	E_G(I,I) =\frac12\sum_{x\in G}\sum_{x\in G} g(x,y)^{-1} I(x,y)^2.
	\end{equation*}

\begin{theorem}[\protect{\cite[Section~1.3.5]{DoyleSnell}}]\label{thm:ThomsponPrinciple}
$$R_G(A,B) = \inf \{E_G(I,I): I\text{ is a feasible current}\}.$$
\end{theorem}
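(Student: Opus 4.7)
The plan is to prove the two inequalities $R_G(A,B) \le \inf E_G(I,I)$ and $R_G(A,B) \ge \inf E_G(I,I)$ separately, the first by a Cauchy--Schwarz duality argument pairing feasible potentials against feasible currents, and the second by constructing an explicit feasible current from the harmonic potential $\tilde u_G$.

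For the lower bound on $E_G(I,I)$, I would take any feasible potential $u$ and any feasible current $I$ and analyse the bilinear pairing $P(u,I)=\frac12\sum_{x,y} I(x,y)\bigl(u(x)-u(y)\bigr)$. Using $I(x,y)=-I(y,x)$, this rearranges to $\sum_x u(x)\sum_y I(x,y)$. The inner sum vanishes for $x\notin A\cup B$ because $I$ is a current; on $A$ it is weighted by $u=0$, and on $B$ it is weighted by $u=1$, where the unit-flux condition gives $\sum_{x\in B}\sum_y I(x,y)=1$. Hence $P(u,I)=1$ for \emph{every} feasible pair. Now factor each summand as $\bigl(g(x,y)^{1/2}(u(x)-u(y))\bigr)\cdot\bigl(g(x,y)^{-1/2}I(x,y)\bigr)$ and apply Cauchy--Schwarz to obtain
\[
1 = P(u,I)^2 \le \DF_G(u,u)\, E_G(I,I),
\]
which, after taking the infimum over $u$ (so $\DF_G(u,u)\to R_G(A,B)^{-1}$) and then over $I$, yields $R_G(A,B)\le \inf E_G(I,I)$.

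For the matching upper bound I would exhibit a current attaining this value. Connectedness of $G$ ensures the minimiser $\tilde u_G$ in \eqref{eq:R_G} exists, and a standard variational argument shows $\tilde u_G$ is harmonic off $A\cup B$, i.e.\ $\sum_y g(x,y)(\tilde u_G(x)-\tilde u_G(y))=0$ for $x\notin A\cup B$. Define
\[
I^*(x,y) = R_G(A,B)\, g(x,y)\bigl(\tilde u_G(y)-\tilde u_G(x)\bigr).
\]
Antisymmetry is immediate; harmonicity gives the interior Kirchhoff condition; and the unit-flux property follows from $\DF_G(\tilde u_G,\tilde u_G)=R_G(A,B)^{-1}$ together with the identity $\DF_G(\tilde u_G,\tilde u_G)=\sum_{x\in B}\sum_y g(x,y)(\tilde u_G(x)-\tilde u_G(y))$, which itself is a summation-by-parts using interior harmonicity and the boundary values $\tilde u_G|_A=0$, $\tilde u_G|_B=1$. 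A direct computation then gives
\[
E_G(I^*,I^*) = R_G(A,B)^2\,\DF_G(\tilde u_G,\tilde u_G) = R_G(A,B),
\]
completing the proof.

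The only subtle step is the summation-by-parts that identifies $\DF_G(\tilde u_G,\tilde u_G)$ with the total boundary flux, since one must be careful to use the harmonicity of $\tilde u_G$ only at interior vertices and to track the antisymmetry correctly. Everything else is essentially bookkeeping plus Cauchy--Schwarz, so I would not expect any real difficulty beyond that.
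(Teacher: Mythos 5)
Your proof is correct, and it differs somewhat from the route the paper (following Doyle--Snell) has in mind. The paper only sketches the argument: it identifies the optimal current as $\tilde{I}_G=R_G(A,B)\nabla\tilde{u}_G$ and defers to the reference, where the classical proof of the lower bound writes an arbitrary feasible current as $\tilde{I}_G$ plus a sourceless circulation and shows the cross term in the energy vanishes, so the energy decomposes orthogonally and is minimized by $\tilde{I}_G$. You instead obtain the lower bound by the duality pairing $P(u,I)=1$ for \emph{every} feasible potential--current pair together with Cauchy--Schwarz, which gives $E_G(I,I)\geq \DF_G(u,u)^{-1}$ and hence $\inf_I E_G(I,I)\geq R_G(A,B)$ without ever invoking the minimizer; your upper-bound half (constructing a current from $\tilde{u}_G$, using interior harmonicity and summation by parts) coincides with the paper's sketch. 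The Cauchy--Schwarz route is arguably cleaner in that it exhibits the potential and current problems as dual to each other and uses the same pairing identity that the orthogonality argument hides inside the vanishing cross term; the decomposition route gives slightly more (uniqueness of the minimizing current). One small point: with your definition $I^*(x,y)=R_G(A,B)\,g(x,y)\bigl(\tilde u_G(y)-\tilde u_G(x)\bigr)$ and the paper's convention that a feasible current has $\sum_{x\in B}\sum_y I(x,y)=+1$, your summation-by-parts identity gives flux $-1$ through $B$ (the same orientation slip appears in the paper's own remark); either take $I^*(x,y)=R_G(A,B)\,g(x,y)\bigl(\tilde u_G(x)-\tilde u_G(y)\bigr)$ or note that $E_G(-I^*,-I^*)=E_G(I^*,I^*)$, so the conclusion is unaffected.
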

This well-known result, often called Thomson's Principle, is proven by showing that for the optimal potential $\tilde{u}_G$ one may define a current by $\nabla u_G(x,y)=(\tilde{u}_G(y)-\tilde{u}_G(x))g(x,y)$, this current has flux $R_G(A,B)^{-1}$ and the optimal current which attains the infimum in the theorem is $\tilde{I}_G=R_G(A,B)\nabla \tilde{u}_G$.  We note that the theorem in the reference is only for the case where $A$ and $B$ are singleton sets, but that the argument requires minimal changes to cover the more general case, and indeed the general case is often treated by ``shorting'' each set to a point.

\subsection{Lipschitz domains}\label{ssec:Lipdom}

We shall need corresponding results on each of our prefractal sets $F_n$, for the sets $A_n$ and $B_n$ defined in~\eqref{eqn:AnBn}.  To this end we must  define a space of potentials and of currents for which suitable integrability criteria are valid; several ways to do this are possible, and it can sometimes be difficult to check all details for the approaches in the literature.

Let $\Omega\subset\mathbb{C}$ be a Lipschitz domain, suppose $A$, $B$ are disjoint closed subsets of $\partial\Omega$ and write $\sigma$ for the surface measure on $\partial\Omega$ and $\nu$ for the interior unit normal. Let $H^1(\Omega)$ denote the Sobolev space with one derivative in $L^2$. Note that since $\Omega$ is a Sobolev extension domain (see \cite[Chapter~6]{Steindiffprops}) the space $C^1(\bar{\Omega})$ is dense in $H^1(\Omega)$.   Also, the trace of $H^1(\Omega)$ to the Lipschitz boundary is the fractional Sobolev space $H^{1/2}(\partial\Omega,d\sigma)$ and $H^1(\Omega)$ convergence implies convergence in $H^{1/2}(\partial\Omega)$ (see, for example~\cite[Theorem~1 of Chapter VII]{JonssonWallin}).  We then define a feasible potential for the pair $(A,B)$ to be $u\in H^1(\Omega)$ which satisfies $u|_A=0$ and $u|_B=1$ in the sense of $H^{1/2}$. It is easy to check that feasible potentials exist, and we define the effective resistance from $A$ to $B$ by
\begin{equation}\label{eq:domainresist}
	R_\Omega(A,B)^{-1} = \inf \{\DF_\Omega(u,u):\ \text{$u$ is a feasible potential.} \}
	\end{equation}

Our space of currents is defined to be the subspace of $L^2(\Omega,\mathbb{R}^n)$ functions with vanishing weak divergence. We need a Gauss-Green theorem to determine a sense in which the boundary values exist; this is standard (even in greater generality) but included for the convenience of the reader.

\begin{lemma}[A Gauss-Green theorem]\label{thm:GaussGreen}
For $u\in H^2(\Omega)$ and $I\in L^2(\Omega,\mathbb{R}^n)$ with $\nabla I=0$ in the weak sense,
\begin{equation*}
\int_\Omega (\nabla u)\cdot I = -\int_{\partial\Omega} u I\cdot d\nu,
\end{equation*}
where the boundary values $I\cdot d\nu$ exist as an element of $H^{-1/2}(\partial\Omega)$, the dual of $H^{1/2}(\partial\Omega)$.
\end{lemma}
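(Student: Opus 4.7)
\medskip

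\noindent\textbf{Proof plan.} The aim is to make sense of $I\cdot \nu$ on the boundary by duality and then read the Gauss--Green identity directly off the definition. Concretely, I would define a linear functional on $H^{1/2}(\partial\Omega)$ by
\begin{equation*}
\langle I\cdot\nu,\,\phi\rangle := -\int_\Omega \nabla \tilde\phi\cdot I,
\end{equation*}
where $\tilde\phi\in H^1(\Omega)$ is any function with trace $\phi$. To turn this into a working definition I must verify three things: that it does not depend on the choice of extension $\tilde\phi$, that it is a bounded functional on $H^{1/2}(\partial\Omega)$, and that it agrees with the classical boundary integral when things are smooth.

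For independence of the extension, suppose $\tilde\phi_1$ and $\tilde\phi_2$ have the same trace, so that $w=\tilde\phi_1-\tilde\phi_2\in H^1_0(\Omega)$. Choose $\psi_k\in C_c^\infty(\Omega)$ with $\psi_k\to w$ in $H^1(\Omega)$. The hypothesis that $\nabla\cdot I=0$ in the weak sense means exactly that $\int_\Omega \nabla\psi\cdot I=0$ for every $\psi\in C_c^\infty(\Omega)$, and $I\in L^2$ lets us pass to the limit to get $\int_\Omega \nabla w\cdot I=0$. This is the main step and the one that genuinely uses the weak divergence-free hypothesis; everything else is bookkeeping. For boundedness, Cauchy--Schwarz gives $|\langle I\cdot\nu,\phi\rangle|\le \|I\|_{L^2}\|\nabla\tilde\phi\|_{L^2}$, and since $\Omega$ is Lipschitz there is a bounded extension operator $E\colon H^{1/2}(\partial\Omega)\to H^1(\Omega)$ (this is why the Lipschitz hypothesis is listed in Section~\ref{ssec:Lipdom}); taking $\tilde\phi=E\phi$ yields
\begin{equation*}
|\langle I\cdot\nu,\,\phi\rangle|\le C\,\|I\|_{L^2(\Omega)}\,\|\phi\|_{H^{1/2}(\partial\Omega)}.
\end{equation*}
Thus $I\cdot\nu\in H^{-1/2}(\partial\Omega)$.

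Finally, for $u\in H^2(\Omega)$ the trace $u|_{\partial\Omega}$ lies in $H^{3/2}(\partial\Omega)\subset H^{1/2}(\partial\Omega)$, so $u$ itself is a legitimate choice of extension of its own trace. Plugging $\tilde\phi=u$ into the definition gives
\begin{equation*}
\int_{\partial\Omega} u\,I\cdot d\nu \;=\; \langle I\cdot\nu,\,u|_{\partial\Omega}\rangle \;=\; -\int_\Omega \nabla u\cdot I,
\end{equation*}
which is the claimed identity. As a sanity check on the sign, for $u\in C^2(\overline\Omega)$ and $I\in C^1(\overline\Omega,\mathbb{R}^n)$ with $\nabla\cdot I=0$, the classical divergence theorem applied to $uI$ together with the convention that $\nu$ is the \emph{interior} normal produces the same minus sign, and this classical case agrees with the definition above by density of $C^1(\overline\Omega)$ in $H^1(\Omega)$.

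The only real obstacle is the extension-independence step, and that only requires invoking density of $C_c^\infty(\Omega)$ in $H^1_0(\Omega)$ together with the precise definition of ``$\nabla\cdot I=0$ weakly.'' Everything else (existence of the bounded trace extension operator, embedding $H^{3/2}\hookrightarrow H^{1/2}$) is part of the standard Lipschitz-domain toolkit already cited at the start of Section~\ref{ssec:Lipdom}.
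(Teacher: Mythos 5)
Your argument is correct, and it takes a genuinely different route from the paper. The paper starts from the classical Gauss--Green identity for $u'\in C^1(\bar\Omega)$, $I'\in C^1(\bar\Omega,\mathbb{R}^n)$, and passes to the limit, approximating $u$ in $H^2$ and $I$ in the $H(\mathrm{div})$ norm $\|I-I'\|_{L^2}+\|\nabla\cdot I-\nabla\cdot I'\|_{L^2}$; the existence of $I\cdot d\nu$ in $H^{-1/2}(\partial\Omega)$ is then read off from the convergence of the boundary terms. You instead define the normal trace directly by duality, $\langle I\cdot\nu,\phi\rangle:=-\int_\Omega\nabla\tilde\phi\cdot I$, and the Gauss--Green identity becomes a tautology once well-definedness and boundedness are checked. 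What your approach buys: you avoid the ``standard but non-trivial'' density of smooth fields in $H(\mathrm{div})$ that the paper invokes, you get the $H^{-1/2}$ bound explicitly from Cauchy--Schwarz plus the bounded right inverse of the trace map, and your proof in fact only needs $u\in H^1(\Omega)$, not $H^2$. What the paper's approach buys is that $I\cdot d\nu$ appears as a limit of genuine surface integrals, which makes its interpretation as a boundary value more immediate. Two small points to tighten: your appeal to $H^{3/2}(\partial\Omega)$ is both unnecessary and slightly delicate on a Lipschitz boundary (trace theory above regularity $1$ is subtle there); all you need is that $u\in H^1(\Omega)$ is an admissible extension of its own $H^{1/2}$ trace. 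And the extension-independence step tacitly uses that on a Lipschitz domain the kernel of the trace operator is exactly $H^1_0(\Omega)$; that is standard, but it is worth naming since it is where the Lipschitz hypothesis enters alongside the extension operator.
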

\begin{proof}
Using a classical version of the Gauss-Green theorem (see~\cite[Theorem~1 of Section~5.8]{EvansGariepy} for a proof applicable to the situation of a Lipschitz boundary)  for $u'\in C^1(\bar{\Omega})$ and $I'\in C^1(\bar{\Omega},\mathbb{R}^n)$
\begin{equation}\label{eq:GGstep}
-\int_{\partial\Omega} u' I'\cdot d\nu
=\int_\Omega \nabla\cdot(u' I')
=\int_\Omega (\nabla u')\cdot I' +\int_\Omega u' \nabla\cdot I'
\end{equation}

We first observe that in~\eqref{eq:GGstep} we may  approximate $u\in H^2(\Omega)$ by $u'\in C^1(\bar{\Omega})$ in $H^2(\Omega)$ norm. On the left we get convergence of the boundary values in $H^{1/2}$, and on the right we have convergence of $\nabla u'$ to $\nabla u$ in $L^2(\Omega,\mathbb{R}^n)$ and of $u'$ to $u$ in $L^2(\Omega)$.

Now the vanishing weak divergence of $I$ determines, in particular, that $\nabla\cdot I\in L^2(\Omega)$. We need the standard but non-trivial fact that one can approximate by $I'$ so that $\|I-I'\|_{L^2}+\|\nabla \cdot I-\nabla\cdot I'\|_{L^2}<\epsilon$. From this it is apparent that  the right side of~\eqref{eq:GGstep} converges to $\int (\nabla u)\cdot I$.

Since the right side converges, the left must also. The limit of $u$ is in $H^{1/2}(\partial\Omega)$, so $I\cdot d\nu$ exists as an element of the dual $H^{-1/2}(\partial\Omega)$.
\end{proof}

We may now define a feasible current for the pair $(A,B)$ as $I\in L^2(\Omega)$ with $\nabla\cdot I=0$ and  $I_{\partial\Omega\setminus(A\cup B)}=0$ as an element of $H^{-1/2}$, and the flux integrals
\begin{equation}\label{eq:fluxintegralseqopp}
\int_B J\cdot \nu d\sigma= - \int_A J\cdot \nu d\sigma =1
\end{equation}
where we note that the equality~\eqref{eq:fluxintegralseqopp} follows from Lemma~\ref{thm:GaussGreen}.

Our work here depends crucially on the following result, which is a special case of~\cite[Theorem~2.1]{RBrown}.  The reader may recognize that Lax-Milgram provides a solution to the stated Dirichlet problem, so we emphasize that the main content of the theorem is that the boundary gradient is in $L^2(\partial\Omega)$.  It should be noted that this result is not valid for arbitrary mixed boundary value problems on Lipschitz domains; in particular, in~\cite{RBrown} it is required that the pieces of the boundary on which the Dirichlet and Neumann conditions hold meet at an angle less than $\pi$. This condition is true for the sets $A_n,B_n,F_n$.  
Finally, the reader may observe that since our domains are polygonal we could have obtained the desired result by classical techniques such as those of Grisvard~\cite[Section~4.3.1]{Grisvard}.  

\begin{theorem}\label{thm:BVPsoln}
For the choices of domain $\Omega$ and sets $A$, $B$ considered in this paper, there is a unique $\tilde{u}_\Omega\in H^1(\Omega)$ with $\nabla \tilde{u}_\Omega\in L^2(d\sigma)$ which solves the mixed boundary value problem
\begin{equation*}
	\begin{cases} \Delta \tilde{u}_\Omega=0 \text{ in $\Omega$}\\
	\tilde{u}_\Omega|_A =0,\ \tilde{u}_\Omega|_B=1 \\
	\frac{\partial \tilde{u}_\Omega}{\partial \nu} =0 \text{ a.e. } d\sigma \text{ on $\partial\Omega\setminus(A\cup B)$.}
	\end{cases}
	\end{equation*}
\end{theorem}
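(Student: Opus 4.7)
The plan is to verify the hypotheses of \cite[Theorem~2.1]{RBrown} for the triple $(F_n,A_n,B_n)$, since the authors have already flagged this as the substantive reference; uniqueness can then be extracted by a Lax--Milgram / variational argument. First I would obtain a weak solution by the direct method: the set of feasible potentials is a nonempty closed affine subspace of $H^1(F_n)$, and because $A_n$ has positive $\sigma$-measure a Poincar\'e--Friedrichs inequality on $F_n$ makes $\cal{E}_{F_n}$ coercive on $\{v\in H^1(F_n):v|_{A_n}=0\}$. This yields a unique minimizer $\tilde{u}_{F_n}$ of $\cal{E}_{F_n}(u,u)$ over feasible potentials. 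Taking arbitrary $H^1$ variations vanishing on $A_n\cup B_n$ and applying Lemma~\ref{thm:GaussGreen} identifies $\tilde{u}_{F_n}$ as harmonic in $F_n$ with vanishing normal derivative on $\partial F_n\setminus(A_n\cup B_n)$ in the weak sense as an element of $H^{-1/2}$.

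The nontrivial remaining content of the theorem is to upgrade this weak Neumann condition to $\nabla\tilde{u}_{F_n}\in L^2(d\sigma)$ on the boundary (so in particular $\partial\tilde{u}_{F_n}/\partial\nu=0$ holds $\sigma$-a.e.~on $\partial F_n\setminus(A_n\cup B_n)$). This is exactly the conclusion of Brown's theorem. Its hypotheses require that $F_n$ be a bounded Lipschitz domain, that $A_n,B_n$ decompose $\partial F_n$ (together with the Neumann part) into finitely many Lipschitz arcs, and that the Dirichlet and Neumann portions meet only at angles strictly less than $\pi$. The first two properties are immediate from the polygonal, self-similar construction: $F_n=\Phi^n(F_0)$ is a finite union of rotated, translated copies of $rF_0$, and $A_n$ and $B_n$ inherit from the fixed edges $L_{4k},L_{4k+2}$ the structure of a finite disjoint union of subsegments of $\partial F_n$.

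The step I expect to be the main obstacle is checking the angle condition. Every junction between a Dirichlet and a Neumann portion of $\partial F_n$ lies at an image $\phi_w(C_j)$ of a vertex of $F_0$ under some cell map of level at most $n$. At each such point the local interior angle of $F_n$ is the interior angle of the regular $4N$-gon, namely $(4N-2)\pi/(4N)=\pi-\pi/(2N)$, which is strictly less than $\pi$ for $N\geq2$. The potentially worrisome case of a re-entrant (reflex) corner of $F_n$ meeting a Dirichlet--Neumann transition must be ruled out by inspection: because $A_n$ and $B_n$ are placed on the \emph{non-adjacent} outer edges $L_{4k}$ and $L_{4k+2}$ of $F_0$, every Dirichlet--Neumann transition on $\partial F_n$ occurs at a convex corner, where the bound $\pi-\pi/(2N)$ applies. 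Once this geometric inspection is in hand, \cite[Theorem~2.1]{RBrown} delivers the stated solvability with $\nabla\tilde{u}_{F_n}\in L^2(d\sigma)$; uniqueness is inherited from the variational characterization above.

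As a backup, since every $F_n$ is polygonal, one could entirely bypass \cite{RBrown} and invoke Grisvard's classical regularity theory for mixed boundary value problems on polygonal domains (\cite[Section~4.3.1]{Grisvard}) to obtain the same conclusion; the angle check above is precisely what is needed to apply that theory as well.
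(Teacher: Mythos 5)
Your proposal matches the paper's treatment: the paper offers no detailed proof of Theorem~\ref{thm:BVPsoln}, but obtains it exactly as you do, by invoking \cite[Theorem~2.1]{RBrown} (with the standard Lax--Milgram/variational argument supplying existence and uniqueness of the weak solution), asserting that the Dirichlet and Neumann portions of $\partial F_n$ meet at angles less than $\pi$, and noting Grisvard \cite[Section~4.3.1]{Grisvard} as a polygonal-domain alternative. Your explicit check that every Dirichlet--Neumann junction sits at a convex cell corner of interior angle $\pi-\pi/(2N)$ is a correct filling-in of the angle condition the paper merely asserts.
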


Once this is known, the proofs of~\cite[Proposition~2.2 and Theorem~2.3]{BB1} may be duplicated in our setting, using Lemma~\ref{thm:GaussGreen} above in place of~\cite[Lemma~2.1]{BB1}, to prove the following analogue of the previously stated result for graphs.  It is perhaps worth remarking that the argument uses that $\nabla\tilde{u}_\Omega$ is a current, as this explains why we need the boundary gradient to be in $L^2(\partial\Omega,d\sigma)$ (or at least in $H^{-1/2}(\partial\Omega)$) as well as illustrating the connection between the Dirichlet problem and the requirement that currents have vanishing divergence. We also note that standard results about harmonic functions ensure $\tilde{u}_\Omega$ has a representative that is continuous at points of $A$; this will be relevant later.

\begin{theorem}\label{thm:resistfromcurrent}
The function $\tilde{u}_\Omega$ from Theorem~\ref{thm:BVPsoln} is the unique minimizer of~\eqref{eq:domainresist} so satisfies $R_\Omega(A,B)^{-1}=\DF_\Omega(\tilde{u}_\Omega,\tilde{u}_\Omega)$.  Moreover $\tilde{J}_\Omega=R_\Omega(A,B)\nabla \tilde{u}_\Omega$ is the unique minimizer for\/
 $\inf\{E_\Omega(J,J): J \text{ is a feasible current}\}$ and thus $E_\Omega(\tilde{J}_\Omega,\tilde{J}_\Omega)=R_\Omega(A,B)$.
\end{theorem}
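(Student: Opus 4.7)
Plan: I would prove the theorem in two independent halves, each using the standard ``cross-term vanishes'' argument for quadratic minimization. Part one identifies $\tilde u_\Omega$ as the unique minimizer in \eqref{eq:domainresist}; part two verifies that $\tilde J_\Omega = R_\Omega(A,B)\,\nabla\tilde u_\Omega$ is feasible and is the unique minimizer of the current energy. Both parts rest on Lemma~\ref{thm:GaussGreen}, used to convert an interior gradient pairing into a boundary pairing that is killed by the mixed boundary data.

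For the potential, let $v$ be any feasible potential and set $w = v - \tilde u_\Omega \in H^1(\Omega)$, so that $w|_A = w|_B = 0$ as $H^{1/2}$ traces. Then
\[
\DF_\Omega(v,v) = \DF_\Omega(\tilde u_\Omega,\tilde u_\Omega) + 2\int_\Omega \nabla\tilde u_\Omega\cdot\nabla w\,dx + \DF_\Omega(w,w).
\]
Harmonicity of $\tilde u_\Omega$ makes $\nabla\tilde u_\Omega$ a divergence-free $L^2$ field, so Lemma~\ref{thm:GaussGreen} rewrites the cross term as the boundary pairing $-\int_{\partial\Omega} w\,\nabla\tilde u_\Omega\cdot d\nu$. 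This vanishes because $w = 0$ on $A\cup B$ and, by Theorem~\ref{thm:BVPsoln}, $\nabla\tilde u_\Omega\cdot\nu = 0$ on the complement. Hence $\DF_\Omega(v,v) \geq \DF_\Omega(\tilde u_\Omega,\tilde u_\Omega)$; equality forces $\nabla w \equiv 0$, and $w|_A = 0$ with $\Omega$ connected then gives $v = \tilde u_\Omega$. This yields both $R_\Omega(A,B)^{-1} = \DF_\Omega(\tilde u_\Omega,\tilde u_\Omega)$ and uniqueness.

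For the current, I first verify that $\tilde J_\Omega$ is feasible: it is $L^2$, weakly divergence-free by harmonicity, and has vanishing normal trace on $\partial\Omega\setminus(A\cup B)$. Its flux is computed by applying Lemma~\ref{thm:GaussGreen} to $u = \tilde u_\Omega$ and $I = \nabla\tilde u_\Omega$, giving $\DF_\Omega(\tilde u_\Omega,\tilde u_\Omega) = -\int_{\partial\Omega}\tilde u_\Omega\,\nabla\tilde u_\Omega\cdot d\nu = -\int_B \nabla\tilde u_\Omega\cdot d\nu$, which after multiplication by $R_\Omega(A,B)$ is the required unit flux through $B$ (with the matching opposite flux through $A$ following from $\nabla\cdot\tilde J_\Omega = 0$ and Lemma~\ref{thm:GaussGreen}, as noted in \eqref{eq:fluxintegralseqopp}). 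Now for any other feasible current $J$, set $K = J - \tilde J_\Omega$; it is divergence-free, has vanishing normal trace off $A\cup B$, and has zero net flux through each of $A$ and $B$. Expanding
\[
E_\Omega(J,J) = E_\Omega(\tilde J_\Omega,\tilde J_\Omega) + 2\int_\Omega \tilde J_\Omega\cdot K\,dx + E_\Omega(K,K)
\]
and applying Lemma~\ref{thm:GaussGreen} once more with $u = \tilde u_\Omega$ and $I = K$ converts the cross term to $-R_\Omega(A,B)\int_{\partial\Omega}\tilde u_\Omega\,K\cdot d\nu$. This vanishes since $\tilde u_\Omega$ is constant on each of $A$ and $B$ (with $K$ having zero flux through each) and $K\cdot\nu = 0$ elsewhere. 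Thus $E_\Omega(J,J) \geq E_\Omega(\tilde J_\Omega,\tilde J_\Omega)$ with equality only if $K = 0$, and the computation $E_\Omega(\tilde J_\Omega,\tilde J_\Omega) = R_\Omega(A,B)^2\,\DF_\Omega(\tilde u_\Omega,\tilde u_\Omega) = R_\Omega(A,B)$ completes the proof.

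The main technical point, and the reason the authors appeal to Lemma~\ref{thm:GaussGreen} rather than to the scalar $H^2$ formulation alone, is that the cross-term identities pair an $H^1$ function (either the test function $w$ or the potential $\tilde u_\Omega$ itself) against a divergence-free $L^2$ field whose normal trace is \emph{genuinely} in $L^2(\partial\Omega,d\sigma)$ by virtue of Theorem~\ref{thm:BVPsoln}. The extension of Lemma~\ref{thm:GaussGreen} from $H^2$ to this $H^1$-against-$L^2(d\sigma)$ setting is routine: approximate the $H^1$ factor in $H^1$-norm by $C^1(\bar\Omega)$ functions and pass to the limit using $H^{1/2}$ convergence of the traces, exactly as in the proof of Lemma~\ref{thm:GaussGreen}. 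After this extension, the remainder of the argument is essentially a transcription of the proofs of \cite[Proposition~2.2 and Theorem~2.3]{BB1}.
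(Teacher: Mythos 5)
Your proposal is correct and takes essentially the same route as the paper, which proves this theorem simply by invoking the arguments of \cite[Proposition~2.2 and Theorem~2.3]{BB1} with Lemma~\ref{thm:GaussGreen} in place of their Lemma~2.1 --- precisely the two cross-term/variational computations you wrote out, including the key observation that $\nabla\tilde{u}_\Omega$ is itself a current whose boundary values lie in $L^2(\partial\Omega,d\sigma)$ by Theorem~\ref{thm:BVPsoln}, which is what legitimizes splitting the boundary pairing over $A$, $B$ and their complement. Your explicit density extension of Lemma~\ref{thm:GaussGreen} to an $H^1$ factor is the same routine step the paper leaves implicit, so apart from the sign conventions for the flux (which are already ambiguous in the paper's own statement of \eqref{eq:fluxintegralseqopp}) there is nothing genuinely different here.
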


We close this section with an observation that will be used to glue potentials and currents in the construction in Section~\ref{sec:resistests}.

\begin{lemma}\label{lem:potandcurglue}
Suppose $\Omega\subset\mathbb{C}$ is a Lipschitz domain symmetric under reflection in a line $L$ and write $\Omega_{\pm}$ for the intersection with the half planes on either side of $L$.
\begin{itemize}
\item[(i)] Let $u_{\pm}\in H^1(\Omega_\pm)$ respectively.  Setting $u=u_\pm$ on $\Omega_\pm$ defines a function in  $H^1(\Omega)$ if and only if $u_\pm$ are equal a.e.\ on $\Omega\cap L$. 
\item[(ii)] Let $I_\pm\in L^2(\Omega_{\pm})$ satisfy $\nabla\cdot I_{\pm}=0$ on $\Omega_\pm$ respectively. Then $I=I_\pm$ on $\Omega_{\pm}$ satisfies $\nabla\cdot I=0$ on $\Omega$ if and only if $\nabla\cdot I_+(z)=-\nabla\cdot I_-(z)$ in the weak sense on $\Omega\cap\mathbb{R}$.
\end{itemize}
\end{lemma}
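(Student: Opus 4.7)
Both parts are local statements that reduce to computing the boundary contributions that appear when integration by parts is performed on each half $\Omega_\pm$ separately. The plan is to test against $\varphi\in C_c^\infty(\Omega)$, split the integral at $L$, and read off the compatibility condition from the surface integrals over $\Omega\cap L$. I will fix coordinates so that $L$ is the real axis and write $\nu^+=(0,1)$ for the unit normal to $L$ pointing into $\Omega_+$, so that $\nu^-=-\nu^+$ is the one pointing into $\Omega_-$.

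For (i), let $g\colon\Omega\to\bb C^2$ be defined by $g=\nabla u_\pm$ on $\Omega_\pm$; this is automatically in $L^2(\Omega)$ since each $\nabla u_\pm\in L^2(\Omega_\pm)$. For any $\varphi\in C_c^\infty(\Omega)$ and $i\in\{1,2\}$, splitting the integral and applying the classical Gauss--Green formula on each Lipschitz half-domain gives
\begin{equation*}
\int_\Omega u\,\partial_i\varphi
= \int_{\Omega_+} u_+\partial_i\varphi + \int_{\Omega_-} u_-\partial_i\varphi
= -\int_\Omega g_i\,\varphi + \int_{L\cap\Omega}\bigl(\mathrm{tr}_+ u_+ - \mathrm{tr}_- u_-\bigr)\varphi\,\nu^+_i\,d\sigma,
\end{equation*}
where $\mathrm{tr}_\pm$ denotes the $H^{1/2}$ trace of $u_\pm$ from $\Omega_\pm$; existence of these traces on $\Omega\cap L$ is the standard trace theorem for Lipschitz domains (as invoked in Section~\ref{ssec:Lipdom}). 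Thus $u\in H^1(\Omega)$ with weak gradient $g$ precisely when the boundary integral vanishes against every $\varphi\in C_c^\infty(\Omega)$, which is equivalent to $\mathrm{tr}_+u_+=\mathrm{tr}_-u_-$ $\sigma$-a.e.\ on $L\cap\Omega$, i.e.\ $u_+=u_-$ a.e.\ on $\Omega\cap L$ in the trace sense.

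For (ii), I repeat the same manoeuvre using Lemma~\ref{thm:GaussGreen}. For $\varphi\in C_c^\infty(\Omega)$,
\begin{equation*}
\int_\Omega I\cdot\nabla\varphi
= \int_{\Omega_+} I_+\cdot\nabla\varphi + \int_{\Omega_-} I_-\cdot\nabla\varphi
= -\int_{\partial\Omega_+}\varphi\,I_+\cdot d\nu^+
  -\int_{\partial\Omega_-}\varphi\,I_-\cdot d\nu^-,
\end{equation*}
where we have used $\nabla\cdot I_\pm=0$ on $\Omega_\pm$ and the normal traces $I_\pm\cdot\nu^\pm$ exist as elements of $H^{-1/2}$. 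Since $\varphi$ is compactly supported in $\Omega$, only the pieces of $\partial\Omega_\pm$ lying in $\Omega\cap L$ contribute, and on $L$ we have $\nu^-=-\nu^+$. Hence the right-hand side becomes the $H^{-1/2}$--$H^{1/2}$ pairing of $\varphi|_L$ with $(I_+\cdot\nu^+)+(I_-\cdot\nu^-)$; vanishing for all such $\varphi$ is exactly the compatibility condition in the statement, which says that the outward normal components of $I_\pm$ on $L$ are negatives of each other, or equivalently that the common normal traces match. This is precisely when $\nabla\cdot I=0$ weakly on all of $\Omega$.

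The only genuinely delicate point is the rigorous interpretation of the boundary traces — the $H^{1/2}$ trace of an $H^1$ function in part (i) and the $H^{-1/2}$ normal trace of a divergence-free $L^2$ vector field in part (ii). Both are already supplied by the framework set up in Subsection~\ref{ssec:Lipdom} and by Lemma~\ref{thm:GaussGreen}, so no new analytic input is required; the rest is algebraic manipulation of surface integrals.
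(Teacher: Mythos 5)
Your argument is correct, but it is not the route the paper takes. For (i) the paper avoids trace theory altogether: it invokes the characterization of $H^1$ by absolute continuity on almost every line segment parallel to the coordinate axes (Ziemer, Theorem~2.1.4), notes that the only segments of $\Omega$ not contained in one of $\Omega_\pm$ are those crossing $L$ perpendicularly, and reads the matching condition off as agreement of the one-dimensional representatives at the crossing points. For (ii) the paper does not introduce normal traces on $\Omega_\pm$ either; it uses a cutoff $g$ supported near $L$ to reduce the weak divergence condition $\int_\Omega I\cdot\nabla f=0$ to test functions $f$ supported in an arbitrarily small neighborhood of $L$, and identifies the resulting condition with the stated cancellation of weak divergences across $L$. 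Your version instead integrates by parts on each half and reads the compatibility from the boundary terms: equality of the $H^{1/2}$ traces in (i), cancellation of the $H^{-1/2}$ normal traces in (ii). That is a clean and standard identification (and your reading of the slightly informal condition ``$\nabla\cdot I_+=-\nabla\cdot I_-$ on $\Omega\cap L$'' as cancellation of normal traces is the intended one), but note two points. First, your route implicitly requires $\Omega_\pm$ to be Lipschitz domains so that the trace theorem and the Gauss--Green formula of Lemma~\ref{thm:GaussGreen} apply to them; this is true for the polygonal pre-carpets used here, but it is an extra hypothesis that the paper's ACL/cutoff arguments do not need, which is presumably why they were chosen. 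Second, there is a harmless sign slip in your display for (i): with $\nu^+$ pointing into $\Omega_+$, the outward normal of $\Omega_+$ along $L$ is $-\nu^+$, so the boundary term should carry $\mathrm{tr}_-u_--\mathrm{tr}_+u_+$; this does not affect the conclusion.
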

\begin{proof}
We first rotate and translate so that $L=\mathbb{R}$, and $\Omega_\pm$ are the intersections with the upper and lower half planes. Evidently all of the relevant notions are invariant under this Euclidean motion.

For (i) we use the standard characterization that if $u\in L^2$ then $u\in H^1$ if and only if it has a representative that is absolutely continuous on almost all line segments in the domain that are parallel to the axes, and the classical derivatives on these line segments are in $L^2$ (see~\cite[Theorem~2.1.4]{Ziemer}).  Observe that all such segments in either of $\Omega_\pm$ are in $\Omega$, and the only segments in $\Omega$ that are not in one of $\Omega_\pm$ are those that are perpendicular to and cross the real axis. Then the absolute continuity with $L^2$ derivatives on each line segment is valid if and only if the representatives from $\Omega_\pm$ have the same value at the intersection of the line segment with the real axis.

For (ii) it is apparent that $I\in L^2(\Omega)$, so the relevant question is whether $\nabla\cdot I=0$. Certainly $\nabla\cdot I=0$ on $\Omega$ implies the same on $\Omega_\pm$.  For the converse, suppose $\nabla\cdot I=0$ on $\Omega_\pm$, so $\int \nabla f_\pm\cdot I=0$ for any $f_\pm\in C^1_0(\Omega_\pm)$.  We have $\nabla\cdot I=0$ if and only if for all $f\in C^1_0(\Omega)$
\begin{equation}\label{eqn:cutoff}
0=\int_\Omega I(z)\cdot \nabla f(z) 
= \int_{\Omega_+} I_+(z) \cdot \nabla f(z)+  \int_{\Omega_-} I_-(z) \cdot \nabla f(z).
\end{equation}
but if $g$ is a $C^1$ cutoff in a small neighborhood of $\mathbb{R}$ then $\int_\Omega I\cdot \nabla ((1-g)f)=0$ because $(1-g)f$ is a sum of  functions in $C^1_0(\Omega_\pm)$, so we only need~\eqref{eqn:cutoff} for $f$ supported in an arbitrarily small neighborhood of $\mathbb{R}$ and hence the condition is equivalent to the equality $\nabla\cdot I_+=-\nabla\cdot I_-$ in the weak sense on $\Omega\cap\mathbb{R}$.
\end{proof}

\section{Resistance Estimates}\label{sec:resistests}
Suppose Theorem~\ref{thm:BVPsoln} is applicable to the Lipschitz domain $\Omega$ and disjoint subsets $A,B \subset\partial\Omega$.  In light of~\eqref{eq:domainresist} we can bound the resistance from below by $\DF_\Omega(u,u)^{-1}$ for $u$ a feasible potential, and by the characterization in Theorem~\ref{thm:resistfromcurrent} we can bound the resistance from above by $E_\Omega(J,J)$ for $J$ a feasible current. To get good resistance estimates one must ensure the potential and current give comparable bounds.

We do this for the pre-carpet sets $F_{m+n}$ following the method of Barlow and Bass in~\cite{BB1}.  First we define graphs $G_m$ and $D_m$, which correspond to scale~$m$ approximations of a current and potential (respectively) on $F_m$, and for which the resistances are comparable.  Next we establish the key technical step, which involves using the symmetries of the $4N$ gasket  to construct a current with prescribed fluxes through certain sides $L_j\cap F_n$ from the optimal current on $F_n$ (see Proposition~\ref{prop:Fncurr}), and to construct a potential with prescribed  data at the endpoints of these sides from the optimal potential on $F_n$  (see Proposition~\ref{prop:Fnpot}).  Combining these results we establish the resistance bounds in Theorem~\ref{thm:mainest} by showing that the optimal current on $G_m$ can be used to define a  current on $F_{m+n}$ with comparable energy, and the optimal potential on $D_m$ can be used to define a potential on $F_{m+n}$ with comparable energy.

The two symmetries of $F$ and the pre-carpets $F_n$ that play an essential role are the rotation $\theta(z)=ze^{i\pi/2N}$ and complex conjugation.  They preserve $F$ and all $F_n$.

\subsection{Graph approximations}\label{ssec:graphs}

\begin{figure}
	\centering
	\begin{tikzpicture}[scale=2]
		\draw (0.9238795242202148, 0.3826834305423125)--(0.7653668629074022, 0.5411960918551251)--(0.5411961019689742, 0.5411960918551251)--(0.38268344065616167, 0.3826834305423125)--(0.38268344065616167, 0.15851266960388447)--(0.5411961019689742, 8.291071940114136e-09)--(0.7653668629074022, 8.291071940114136e-09)--(0.9238795242202148, 0.15851266960388447)--(0.9238795242202148, 0.3826834305423125);
		\draw (0.3826834305423126, 0.9238795242202148)--(0.5411960918551251, 0.7653668629074022)--(0.5411960918551251, 0.5411961019689742)--(0.3826834305423125, 0.38268344065616167)--(0.15851266960388452, 0.38268344065616167)--(8.291071995625288e-09, 0.5411961019689742)--(8.291071995625288e-09, 0.7653668629074022)--(0.15851266960388452, 0.9238795242202148)--(0.3826834305423126, 0.9238795242202148);
		\draw (-0.38268343054231246, 0.9238795242202148)--(-0.541196091855125, 0.7653668629074022)--(-0.541196091855125, 0.5411961019689742)--(-0.38268343054231246, 0.38268344065616167)--(-0.15851266960388447, 0.38268344065616167)--(-8.291071940114136e-09, 0.5411961019689742)--(-8.291071940114136e-09, 0.7653668629074022)--(-0.1585126696038844, 0.9238795242202148)--(-0.38268343054231246, 0.9238795242202148);
		\draw(-0.9238795242202148, 0.38268343054231263)--(-0.7653668629074022, 0.5411960918551251)--(-0.5411961019689742, 0.5411960918551251)--(-0.38268344065616167, 0.3826834305423126)--(-0.38268344065616167, 0.15851266960388452)--(-0.5411961019689743, 8.291072051136439e-09)--(-0.7653668629074023, 8.291072051136439e-09)--(-0.9238795242202148, 0.15851266960388458)--(-0.9238795242202148, 0.38268343054231263);
		\draw(-0.9238795242202148, -0.3826834305423124)--(-0.7653668629074024, -0.541196091855125)--(-0.5411961019689744, -0.541196091855125)--(-0.3826834406561618, -0.38268343054231246)--(-0.3826834406561618, -0.1585126696038844)--(-0.5411961019689743, -8.291071884602985e-09)--(-0.7653668629074023, -8.291071884602985e-09)--(-0.9238795242202148, -0.15851266960388435)--(-0.9238795242202148, -0.3826834305423124);
		\draw(-0.3826834277913151, -0.9238795253597152)--(-0.15851267235488267, -0.9238795253597152)--(-7.151571779218102e-09, -0.7653668601564043)--(-7.151571779218102e-09, -0.5411961047199718)--(-0.15851267235488267, -0.38268343951666095)--(-0.38268342779131514, -0.38268343951666106)--(-0.541196092994626, -0.5411961047199718)--(-0.541196092994626, -0.7653668601564043)--(-0.3826834277913151, -0.9238795253597152);
		\draw(0.3826834277913149, -0.9238795253597153)--(0.15851267235488245, -0.9238795253597153)--(7.151571557173497e-09, -0.7653668601564044)--(7.151571501662346e-09, -0.541196104719972)--(0.1585126723548824, -0.38268343951666106)--(0.38268342779131487, -0.38268343951666106)--(0.5411960929946258, -0.541196104719972)--(0.5411960929946258, -0.7653668601564043)--(0.3826834277913149, -0.9238795253597153);
		\draw (0.9238795253597152, -0.3826834277913152)--(0.9238795253597152, -0.15851267235488278)--(0.7653668601564044, -7.151571834729253e-09)--(0.5411961047199719, -7.151571834729253e-09)--(0.38268343951666106, -0.15851267235488264)--(0.38268343951666095, -0.3826834277913151)--(0.5411961047199718, -0.541196092994626)--(0.7653668601564043, -0.541196092994626)--(0.9238795253597152, -0.3826834277913152);
		\draw [ultra thick] (0.5411961019689742, -0.5411960918551251)--(0.38268344065616167, -0.3826834305423125); 
		\draw [ultra thick] (0.9238795242202148, -0.15851266960388447)--(0.9238795242202148, -0.3826834305423125); 
		\draw [ultra thick] (0.5411961019689742, 8.291071940114136e-09)--(0.7653668629074022, 8.291071940114136e-09); 
		\draw [dashed] (0.65, 0) -- (0.65, -0.27) -- (0.46, -0.46);
		\draw [dashed] (0.65, -0.27) -- (0.923, -0.27);
		\node [left] at  (0.9238795242202148, 0.3826834305423125) {$C_1$};
		\node [below right] at (0.9238795253597152, -0.3826834277913152) {$C_{0}$};
		\node [below right] at (0.3826834305423126, -0.9238795242202148) {$C_{-1}$};
		\radvertex{ 0.9238795242202148, 0.3826834305423125}{0.02}\radvertex{ 0.9238795253597152, -0.3826834277913152}{0.02}\radvertex{0.3826834305423126, -0.9238795242202148 }{0.02}
		\draw (1.1, -0.38) -- (1.2, -0.38) -- (1.2, 0.38) -- (1.1, 0.38);
		\node [right] at (1.2, 0) {$L_0$};
		\draw (0.38, 1.1) -- (0.38, 1.2) -- (-0.38, 1.2) -- (-0.38, 1.1);
		\node [above] at (0, 1.2) {$L_N$};
		%\draw [ultra thick] (-0.3826834305423126, -0.9238795242202148) -- (-0.9238795253597152, -0.3826834277913152);
		\draw (-.45, -0.98) -- (-0.52, -1.05) -- (-1.05, -0.52) -- (-0.98, -0.45);
		\node [below left] at (-0.77, -0.77) {$L_{3N - 1}$};
		\draw [dotted]  (0.9238795242202148, 0) -- (0,0) -- (0,0.9238795242202148);
		\draw[dotted] (0,0) -- (-0.6532814765755152,-0.6532814765755152);
	\end{tikzpicture}
\caption{The map $\tilde\psi_0$ takes $L_N$ and $L_{3N-1}$ to the sides of the cell $\tilde\psi_0(F_0)$ that intersect other cells of the same size, while $\tilde\psi_0(L_0)\subset L_0$ (thick lines).  The graph $G_0$ (dotted lines) has vertices at the center of the diagram and the centers of the lines $L_0$, $L_N$ and $L_{3N-1}$; $\tilde\psi_0(G_0)$ is shown with dashed lines.}\label{GraphDefHelp}
\end{figure}
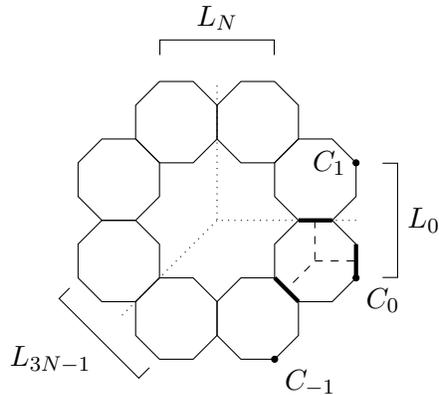

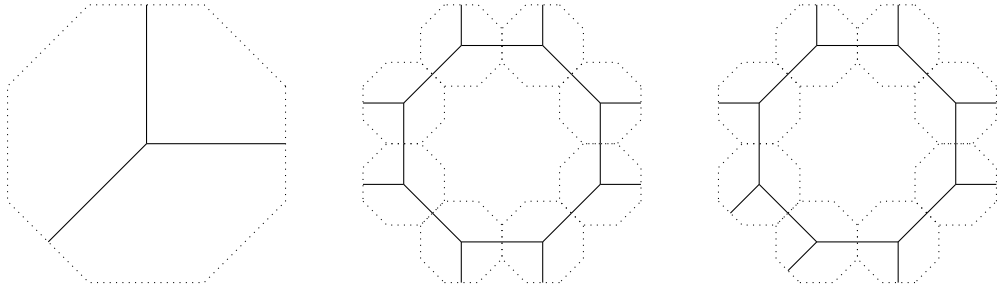
\begin{figure} 
\centering
\begin{tikzpicture}[scale=2]\draw (0.9238795, 0.0)--(0.0, 0.0);
\draw (5.551115e-17, 0.9238795)--(0.0, 0.0);
\draw (-0.6532815, -0.6532815)--(0.0, 0.0);\draw[dotted] (0.9238795, 0.38268343)--(0.38268343, 0.9238795)--(-0.38268343, 0.9238795)--(-0.9238795, 0.38268343)--(-0.9238795, -0.38268343)--(-0.38268343, -0.9238795)--(0.38268343, -0.9238795)--(0.9238795, -0.38268343)--(0.9238795, 0.38268343); \end{tikzpicture}\hspace{0.4in}\begin{tikzpicture}[scale=2]\draw (0.9238795242202148, -0.2705980500730985)--(0.6532814824381883, -0.2705980500730985);
\draw (0.6532814824381883, -8.291071940114136e-09)--(0.6532814824381883, -0.2705980500730985);
\draw (0.4619397582192208, -0.46193977429206595)--(0.6532814824381883, -0.2705980500730985);
\draw (0.9238795242202148, 0.2705980500730985)--(0.6532814824381883, 0.2705980500730985);
\draw (0.6532814824381883, 8.291071940114136e-09)--(0.6532814824381883, 0.2705980500730985);
\draw (0.4619397582192208, 0.46193977429206595)--(0.6532814824381883, 0.2705980500730985);
\draw (0.27059805007309856, 0.9238795242202148)--(0.27059805007309856, 0.6532814824381883);
\draw (8.291071995625288e-09, 0.6532814824381883)--(0.27059805007309856, 0.6532814824381883);
\draw (0.461939774292066, 0.4619397582192208)--(0.27059805007309856, 0.6532814824381883);
\draw (-0.27059805007309845, 0.9238795242202148)--(-0.27059805007309845, 0.6532814824381883);
\draw (-8.291071940114136e-09, 0.6532814824381883)--(-0.27059805007309845, 0.6532814824381883);
\draw (-0.4619397742920659, 0.4619397582192208)--(-0.27059805007309845, 0.6532814824381883);
\draw (-0.9238795242202148, 0.2705980500730986)--(-0.6532814824381883, 0.27059805007309856);
\draw (-0.6532814824381883, 8.291072051136439e-09)--(-0.6532814824381883, 0.27059805007309856);
\draw (-0.46193975821922073, 0.461939774292066)--(-0.6532814824381883, 0.27059805007309856);
\draw (-0.9238795242202148, -0.2705980500730984)--(-0.6532814824381883, -0.27059805007309845);
\draw (-0.6532814824381883, -8.291071884602985e-09)--(-0.6532814824381883, -0.27059805007309845);
\draw (-0.4619397582192209, -0.4619397742920659)--(-0.6532814824381883, -0.27059805007309845);
\draw (-0.27059805007309895, -0.9238795242202146)--(-0.2705980500730989, -0.6532814824381881);
\draw (-8.291072384203346e-09, -0.6532814824381882)--(-0.2705980500730989, -0.6532814824381881);
\draw (-0.46193977429206634, -0.4619397582192206)--(-0.2705980500730989, -0.6532814824381881);
\draw (0.2705980500730986, -0.9238795242202147)--(0.2705980500730987, -0.6532814824381882);
\draw (8.29107210664759e-09, -0.6532814824381881)--(0.2705980500730987, -0.6532814824381882);
\draw (0.4619397742920662, -0.46193975821922073)--(0.2705980500730987, -0.6532814824381882);\draw[dotted] (0.9238795242202148, 0.3826834305423125)--(0.7653668629074022, 0.5411960918551251)--(0.5411961019689742, 0.5411960918551251)--(0.38268344065616167, 0.3826834305423125)--(0.38268344065616167, 0.15851266960388447)--(0.5411961019689742, 8.291071940114136e-09)--(0.7653668629074022, 8.291071940114136e-09)--(0.9238795242202148, 0.15851266960388447)--(0.9238795242202148, 0.3826834305423125);
\draw[dotted] (0.3826834305423126, 0.9238795242202148)--(0.5411960918551251, 0.7653668629074022)--(0.5411960918551251, 0.5411961019689742)--(0.3826834305423125, 0.38268344065616167)--(0.15851266960388452, 0.38268344065616167)--(8.291071995625288e-09, 0.5411961019689742)--(8.291071995625288e-09, 0.7653668629074022)--(0.15851266960388452, 0.9238795242202148)--(0.3826834305423126, 0.9238795242202148);
\draw[dotted] (-0.38268343054231246, 0.9238795242202148)--(-0.541196091855125, 0.7653668629074022)--(-0.541196091855125, 0.5411961019689742)--(-0.38268343054231246, 0.38268344065616167)--(-0.15851266960388447, 0.38268344065616167)--(-8.291071940114136e-09, 0.5411961019689742)--(-8.291071940114136e-09, 0.7653668629074022)--(-0.1585126696038844, 0.9238795242202148)--(-0.38268343054231246, 0.9238795242202148);
\draw[dotted] (-0.9238795242202148, 0.38268343054231263)--(-0.7653668629074022, 0.5411960918551251)--(-0.5411961019689742, 0.5411960918551251)--(-0.38268344065616167, 0.3826834305423126)--(-0.38268344065616167, 0.15851266960388452)--(-0.5411961019689743, 8.291072051136439e-09)--(-0.7653668629074023, 8.291072051136439e-09)--(-0.9238795242202148, 0.15851266960388458)--(-0.9238795242202148, 0.38268343054231263);
\draw[dotted] (-0.9238795242202148, -0.3826834305423124)--(-0.7653668629074024, -0.541196091855125)--(-0.5411961019689744, -0.541196091855125)--(-0.3826834406561618, -0.38268343054231246)--(-0.3826834406561618, -0.1585126696038844)--(-0.5411961019689743, -8.291071884602985e-09)--(-0.7653668629074023, -8.291071884602985e-09)--(-0.9238795242202148, -0.15851266960388435)--(-0.9238795242202148, -0.3826834305423124);
\draw[dotted] (-0.3826834277913151, -0.9238795253597152)--(-0.15851267235488267, -0.9238795253597152)--(-7.151571779218102e-09, -0.7653668601564043)--(-7.151571779218102e-09, -0.5411961047199718)--(-0.15851267235488267, -0.38268343951666095)--(-0.38268342779131514, -0.38268343951666106)--(-0.541196092994626, -0.5411961047199718)--(-0.541196092994626, -0.7653668601564043)--(-0.3826834277913151, -0.9238795253597152);
\draw[dotted] (0.3826834277913149, -0.9238795253597153)--(0.15851267235488245, -0.9238795253597153)--(7.151571557173497e-09, -0.7653668601564044)--(7.151571501662346e-09, -0.541196104719972)--(0.1585126723548824, -0.38268343951666106)--(0.38268342779131487, -0.38268343951666106)--(0.5411960929946258, -0.541196104719972)--(0.5411960929946258, -0.7653668601564043)--(0.3826834277913149, -0.9238795253597153);
\draw[dotted] (0.9238795253597152, -0.3826834277913152)--(0.9238795253597152, -0.15851267235488278)--(0.7653668601564044, -7.151571834729253e-09)--(0.5411961047199719, -7.151571834729253e-09)--(0.38268343951666106, -0.15851267235488264)--(0.38268343951666095, -0.3826834277913151)--(0.5411961047199718, -0.541196092994626)--(0.7653668601564043, -0.541196092994626)--(0.9238795253597152, -0.3826834277913152); \end{tikzpicture}\hspace{0.4in}\begin{tikzpicture}[scale=2]\draw (0.9238795242202148, -0.2705980500730985)--(0.6532814824381883, -0.2705980500730985);
\draw (0.6532814824381883, -8.291071940114136e-09)--(0.6532814824381883, -0.2705980500730985);
\draw (0.4619397582192208, -0.46193977429206595)--(0.6532814824381883, -0.2705980500730985);
\draw (0.9238795242202148, 0.2705980500730985)--(0.6532814824381883, 0.2705980500730985);
\draw (0.6532814824381883, 8.291071940114136e-09)--(0.6532814824381883, 0.2705980500730985);
\draw (0.4619397582192208, 0.46193977429206595)--(0.6532814824381883, 0.2705980500730985);
\draw (0.27059805007309856, 0.9238795242202148)--(0.27059805007309856, 0.6532814824381883);
\draw (8.291071995625288e-09, 0.6532814824381883)--(0.27059805007309856, 0.6532814824381883);
\draw (0.461939774292066, 0.4619397582192208)--(0.27059805007309856, 0.6532814824381883);
\draw (-0.27059805007309845, 0.9238795242202148)--(-0.27059805007309845, 0.6532814824381883);
\draw (-8.291071940114136e-09, 0.6532814824381883)--(-0.27059805007309845, 0.6532814824381883);
\draw (-0.4619397742920659, 0.4619397582192208)--(-0.27059805007309845, 0.6532814824381883);
\draw (-0.9238795242202148, 0.2705980500730986)--(-0.6532814824381883, 0.27059805007309856);
\draw (-0.6532814824381883, 8.291072051136439e-09)--(-0.6532814824381883, 0.27059805007309856);
\draw (-0.46193975821922073, 0.461939774292066)--(-0.6532814824381883, 0.27059805007309856);
\draw (-0.8446231927580601, -0.4619397603929701)--(-0.6532814824381883, -0.27059805007309845);
\draw (-0.4619397721183167, -0.46193976039297013)--(-0.6532814824381883, -0.27059805007309845);
\draw (-0.6532814824381883, 1.1365217877923328e-08)--(-0.6532814824381883, -0.27059805007309845);
\draw (-0.4619397603929706, -0.8446231927580597)--(-0.2705980500730989, -0.6532814824381881);
\draw (-0.4619397603929705, -0.4619397721183164)--(-0.2705980500730989, -0.6532814824381881);
\draw (1.1365217378322967e-08, -0.6532814824381882)--(-0.2705980500730989, -0.6532814824381881);
\draw (0.2705980500730986, -0.9238795242202147)--(0.2705980500730987, -0.6532814824381882);
\draw (8.29107210664759e-09, -0.6532814824381881)--(0.2705980500730987, -0.6532814824381882);
\draw (0.4619397742920662, -0.46193975821922073)--(0.2705980500730987, -0.6532814824381882);\draw[dotted] (0.9238795242202148, 0.3826834305423125)--(0.7653668629074022, 0.5411960918551251)--(0.5411961019689742, 0.5411960918551251)--(0.38268344065616167, 0.3826834305423125)--(0.38268344065616167, 0.15851266960388447)--(0.5411961019689742, 8.291071940114136e-09)--(0.7653668629074022, 8.291071940114136e-09)--(0.9238795242202148, 0.15851266960388447)--(0.9238795242202148, 0.3826834305423125);
\draw[dotted] (0.3826834305423126, 0.9238795242202148)--(0.5411960918551251, 0.7653668629074022)--(0.5411960918551251, 0.5411961019689742)--(0.3826834305423125, 0.38268344065616167)--(0.15851266960388452, 0.38268344065616167)--(8.291071995625288e-09, 0.5411961019689742)--(8.291071995625288e-09, 0.7653668629074022)--(0.15851266960388452, 0.9238795242202148)--(0.3826834305423126, 0.9238795242202148);
\draw[dotted] (-0.38268343054231246, 0.9238795242202148)--(-0.541196091855125, 0.7653668629074022)--(-0.541196091855125, 0.5411961019689742)--(-0.38268343054231246, 0.38268344065616167)--(-0.15851266960388447, 0.38268344065616167)--(-8.291071940114136e-09, 0.5411961019689742)--(-8.291071940114136e-09, 0.7653668629074022)--(-0.1585126696038844, 0.9238795242202148)--(-0.38268343054231246, 0.9238795242202148);
\draw[dotted] (-0.9238795242202148, 0.38268343054231263)--(-0.7653668629074022, 0.5411960918551251)--(-0.5411961019689742, 0.5411960918551251)--(-0.38268344065616167, 0.3826834305423126)--(-0.38268344065616167, 0.15851266960388452)--(-0.5411961019689743, 8.291072051136439e-09)--(-0.7653668629074023, 8.291072051136439e-09)--(-0.9238795242202148, 0.15851266960388458)--(-0.9238795242202148, 0.38268343054231263);
\draw[dotted] (-0.9238795242202148, -0.3826834305423124)--(-0.7653668629074024, -0.541196091855125)--(-0.5411961019689744, -0.541196091855125)--(-0.3826834406561618, -0.38268343054231246)--(-0.3826834406561618, -0.1585126696038844)--(-0.5411961019689743, -8.291071884602985e-09)--(-0.7653668629074023, -8.291071884602985e-09)--(-0.9238795242202148, -0.15851266960388435)--(-0.9238795242202148, -0.3826834305423124);
\draw[dotted] (-0.3826834277913151, -0.9238795253597152)--(-0.15851267235488267, -0.9238795253597152)--(-7.151571779218102e-09, -0.7653668601564043)--(-7.151571779218102e-09, -0.5411961047199718)--(-0.15851267235488267, -0.38268343951666095)--(-0.38268342779131514, -0.38268343951666106)--(-0.541196092994626, -0.5411961047199718)--(-0.541196092994626, -0.7653668601564043)--(-0.3826834277913151, -0.9238795253597152);
\draw[dotted] (0.3826834277913149, -0.9238795253597153)--(0.15851267235488245, -0.9238795253597153)--(7.151571557173497e-09, -0.7653668601564044)--(7.151571501662346e-09, -0.541196104719972)--(0.1585126723548824, -0.38268343951666106)--(0.38268342779131487, -0.38268343951666106)--(0.5411960929946258, -0.541196104719972)--(0.5411960929946258, -0.7653668601564043)--(0.3826834277913149, -0.9238795253597153);
\draw[dotted] (0.9238795253597152, -0.3826834277913152)--(0.9238795253597152, -0.15851267235488278)--(0.7653668601564044, -7.151571834729253e-09)--(0.5411961047199719, -7.151571834729253e-09)--(0.38268343951666106, -0.15851267235488264)--(0.38268343951666095, -0.3826834277913151)--(0.5411961047199718, -0.541196092994626)--(0.7653668601564043, -0.541196092994626)--(0.9238795253597152, -0.3826834277913152); \end{tikzpicture}
\caption{From left to right, for $N = 2$: $G_0$, $\cup_i \psi_i(G_0)$, and $\cup_i\tilde \psi_i(G_0)$.} \label{psifig} 

\end{figure}

For a fixed $m$ the pre-carpet $F_m$ is a union of cells, each of which is a scaled translated copy of the convex set $F_0$. Our immediate goal is to define graphs that reflect the the adjacency structure of the cells and have boundary on $A_m$ and $B_m$. 
We define maps $\psi_w$, and $\Psi_m$ as follows.

\begin{definition} Define maps 
\begin{equation*}
	\psi_j(z) = \begin{cases}
	\phi_j\circ \theta^j( z) & \text{if $j\equiv 0\mod 2$},\\
	\phi_j\circ \theta^{j-1}(\bar z) &  \text{if $j\equiv 1\mod 2$}.
	\end{cases}
\end{equation*}
If $N$ is even, let:
\begin{equation*}
\tilde \psi_j(z) 
	= \begin{cases} 
		\phi_{3N-1}\circ \theta^{3N-1}(z) & j = 3N - 1, \\
		\phi_{3N} \circ \theta^{3N - 1}(\bar z) & j = 3N, \\ 		
		\psi_j(z) & j \in \Lambda \setminus \{3N, 3N-1\}, \\
		\end{cases}
\end{equation*}
and if $N$ is odd, let: 
\begin{equation*}
\tilde \psi_j(z)
	= \begin{cases}  \phi_{N} \circ \theta^{N}(z) & j = N, \\ 
		\phi_{N+1}\circ \theta^{N}(\bar z) & j = N+1, \\
		\psi_j(z) & j \in \Lambda \setminus \{N, N+1\}. \\ 
		\end{cases}
\end{equation*}
Finally, given a word $w=w_1w_2\dotsm w_m$, set $\psi_w =\psi_{w_1}\circ\tilde\psi_{w_2}\circ\dotsm\circ\tilde\psi_{w_m}$ and define a map $\Psi_m=\cup_{|w|=m}\psi_w:F_0\to F_m$.
\end{definition}

\begin{remark} 
Both $\psi_j$ and $\tilde\psi_j$ take $F_0$ to the unique cell of $F_1$ that contains $C_j$, however we first rotate and/or reflect $F_0$ so as to adjust the location of the image of some specific sides $L_k$.   The reason for doing so comes from the adjacency structure of cells in $F_m$ and the location of our desired graph boundary on $A_m\cup B_m$.

Specifically, the choice of rotations and reflections for the $\tilde\psi_j$ ensures that $\tilde\psi_j(L_N)$ and $\tilde\psi_j(L_{3N-1})$ are the sides of the cell $\tilde\psi_j(F_0)$ that intersect neighboring cells, and that if this cell intersects $L_0$, $L_N$ or $L_{3N-1}$ then it does so along the side $\tilde\psi_j(L_0)$, see Figure~\ref{GraphDefHelp}.  In consequence, if we take any connected graph having boundary at either the center or the endpoints of the sides $L_0$, $L_N$ and $L_{3N-1}$ then the union of the images under $\tilde\psi_j$ for $j=0,\dotsc,4N-1$ is also a connected graph with boundary at the same points. This latter is illustrated in Figure~\ref{GraphDefHelp} for the case of the graph $G_0$ of Definition~\ref{defn:G0} using dotted and dashed lines.  The procedure can then be iterated, so the map $\cup_{|w|=m}\tilde\psi_{w_1}\circ\dotsm\circ\tilde\psi_{w_m}$ will also produce a graph of the same type.

The reason for choosing slightly different rotations and reflections for the maps $\psi_j$ is that our final goal is to obtain graphs that reflect the adjacency structure of $F_m$ and have boundary on $A_m\cup B_m$. Using only the $\tilde\psi_j$ would give the correct adjacency structure but  (as discussed above) boundary on the sides $L_0$, $L_N$ and $L_{N-1}$.  To fix this we use  the maps $\psi_j$, each of which maps $L_N$ and $L_{3N-1}$ to the sides where the cell $\psi_j(F_0)$ meets its neighbors, but also has $\psi_j(L_0)\subset A_1\cup B_1$.  Applying a single copy of $\psi_j$ at the end of each of the compositions defining $\Psi_m$ ensures the boundary is in $A_m\cup B_m$.  The effect when starting with the graph $G_0$ from Definition~\ref{defn:G0} is in Figure~\ref{fig:Ggraphs}. Figure~\ref{fig:Dgraphs} shows both the $D_0$ graphs of Definition~\ref{defn:D0} (on the left), which have boundary on $L_0$, $L_N$ and $L_{3N-1}$, and the $D_1$ and $D_2$ graphs (center and right) which have boundary on $A_1\cup B_1$ and $A_2\cup B_2$ respectively.
\end{remark}

We can now define the graphs $G_m$ that we will use to approximate currents on $F_m$.  They correspond to ignoring the internal structure of cells and recording only the (net) flux through the intersections of pairs of cells. Figure~\ref{GraphDefHelp} shows the edges of $G_0$ as dotted lines.  Figure~\ref{fig:Ggraphs} shows graphs $G_2$ on the octacarpet and dodecacarpet.

\begin{definition}\label{defn:G0}
The graph $G_0$ has vertices at $0$ (the center of $F_0$) and at   $\frac12(C_j+C_{j+1})$ for $j=0,N,3N-1$, which are the midpoints of the sides $L_0\cap F_0$, $L_N\cap F_0$ and $L_{3N-1}\cap F_0$.  It has one edge from $0$ to each of the other three vertices. The graphs $G_m$ are defined via $G_m = \Psi_m(G_0)$.
\end{definition}

Let $\tilde I_m^G$ be the optimal current from $A_m$ to $B_m$ on $G_m$, where we recall that these sets were defined in~\eqref{eqn:AnBn}. By symmetry, the total flux through each $L_i \cap A_m$ is $-1/N$, and the total flux through each $L_i \cap B_m$ is $1/N$. Since there are $N$ sides in $A_m$ and also in $B_m$, this gives unit total flux from $A_m$ to $B_m$.  The corresponding optimal potential is denoted $\tilde{u}_m^G$ and is $0$ on $A_m$ and $1$ on $B_m$.   We write $R_m^G$ for the resistance of $G_m$ defined as in~\eqref{eq:R_G}.  Also note that $\tilde{I}_m^G\circ\psi_w$ is a current on $G_0$ for each word $w$ of length $m$.   

\begin{figure}
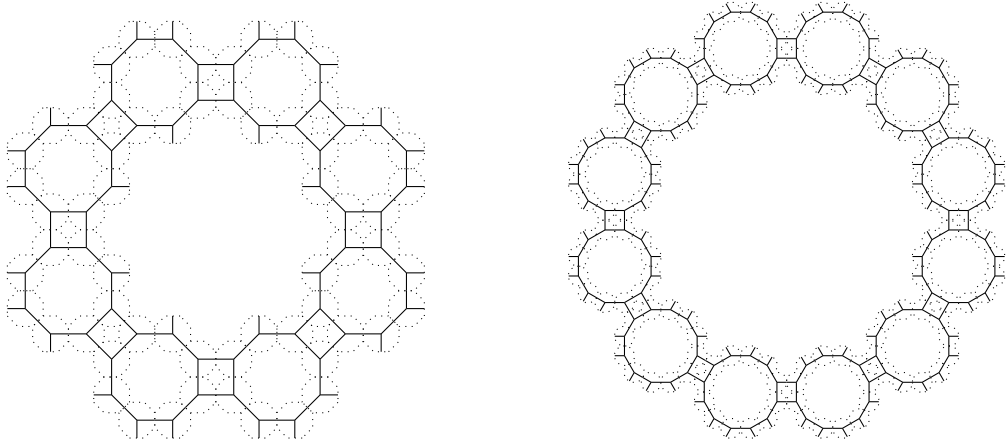

\centering
\begin{tikzpicture}[scale=3]\input{fullocta}\input{octaS2}\end{tikzpicture}\hspace{0.75in}\begin{tikzpicture}[scale=3]\input{fulldodeca}\input{dodecaS2}
\end{tikzpicture}
\caption{The graphs $G_2$ for the octacarpet ($N=2$) and the dodecacarpet ($N = 3$). }
\label{fig:Ggraphs}
\end{figure}

The graphs $D_m$ that we use to approximate potentials on $F_m$ have vertices at each endpoint of a side common to two cells. Figure~\ref{fig:Dgraphs} shows the first few $D_n$ for $N=2$ and $N=3$. 

\begin{definition}\label{defn:D0}
The graph $D_0$ has vertices $\{0,C_0,C_1,C_N,C_{N+1},C_{3N-1},C_{3N}\}$ and edges from $0$ to each of the other six vertices.The graphs $D_m$ are defined by $\Psi_m(D_0)$. Figure~\ref{fig:Dgraphs} shows these graphs for $n = 0, 1, 2$ on the octacarpet and dodecacarpet. 
\end{definition}

\begin{figure}
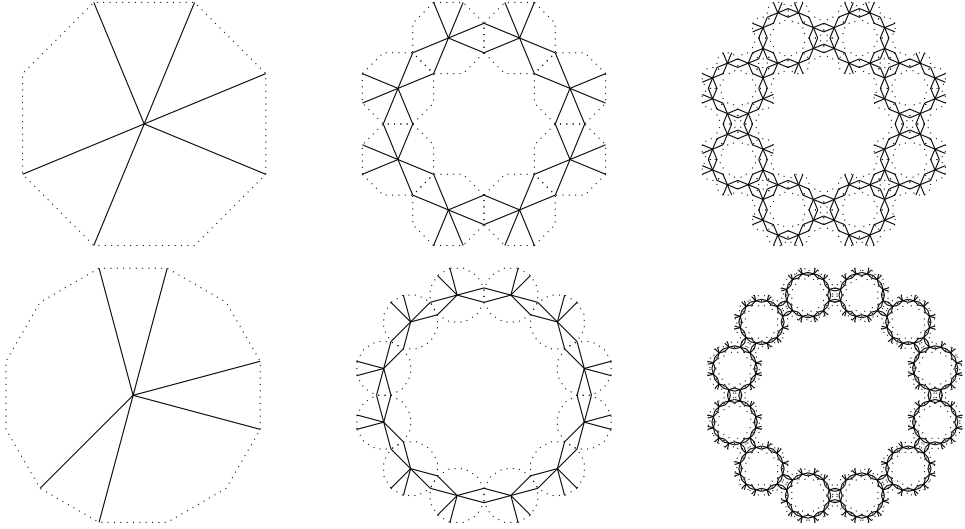

\centering
\begin{tikzpicture}[scale=1.75, rotate = 315]\draw (0.9238795, 0.38268343)--(0.0, 0.0);
\draw (0.38268343, 0.9238795)--(0.0, 0.0);
\draw (-0.38268343, 0.9238795)--(0.0, 0.0);
\draw (-0.9238795, 0.38268343)--(0.0, 0.0);
\draw (-0.38268343, -0.9238795)--(0.0, 0.0);
\draw (0.38268343, -0.9238795)--(0.0, 0.0);\draw[dotted] (0.9238795, 0.38268343)--(0.38268343, 0.9238795)--(-0.38268343, 0.9238795)--(-0.9238795, 0.38268343)--(-0.9238795, -0.38268343)--(-0.38268343, -0.9238795)--(0.38268343, -0.9238795)--(0.9238795, -0.38268343)--(0.9238795, 0.38268343);\end{tikzpicture}\hspace{0.5in}\begin{tikzpicture}[scale=1.75, rotate =45]\draw (0.9238795242202148, 0.3826834305423125)--(0.6532814824381883, 0.2705980500730985);
\draw (0.7653668629074022, 0.5411960918551251)--(0.6532814824381883, 0.2705980500730985);
\draw (0.5411961019689742, 0.5411960918551251)--(0.6532814824381883, 0.2705980500730985);
\draw (0.38268344065616167, 0.3826834305423125)--(0.6532814824381883, 0.2705980500730985);
\draw (0.5411961019689742, 8.291071940114136e-09)--(0.6532814824381883, 0.2705980500730985);
\draw (0.7653668629074022, 8.291071940114136e-09)--(0.6532814824381883, 0.2705980500730985);
\draw (0.3826834305423126, 0.9238795242202148)--(0.27059805007309856, 0.6532814824381883);
\draw (0.5411960918551251, 0.7653668629074022)--(0.27059805007309856, 0.6532814824381883);
\draw (0.5411960918551251, 0.5411961019689742)--(0.27059805007309856, 0.6532814824381883);
\draw (0.3826834305423125, 0.38268344065616167)--(0.27059805007309856, 0.6532814824381883);
\draw (8.291071995625288e-09, 0.5411961019689742)--(0.27059805007309856, 0.6532814824381883);
\draw (8.291071995625288e-09, 0.7653668629074022)--(0.27059805007309856, 0.6532814824381883);
\draw (-0.38268343054231246, 0.9238795242202148)--(-0.27059805007309845, 0.6532814824381883);
\draw (-0.541196091855125, 0.7653668629074022)--(-0.27059805007309845, 0.6532814824381883);
\draw (-0.541196091855125, 0.5411961019689742)--(-0.27059805007309845, 0.6532814824381883);
\draw (-0.38268343054231246, 0.38268344065616167)--(-0.27059805007309845, 0.6532814824381883);
\draw (-8.291071940114136e-09, 0.5411961019689742)--(-0.27059805007309845, 0.6532814824381883);
\draw (-8.291071940114136e-09, 0.7653668629074022)--(-0.27059805007309845, 0.6532814824381883);
\draw (-0.9238795242202148, 0.38268343054231263)--(-0.6532814824381883, 0.27059805007309856);
\draw (-0.7653668629074022, 0.5411960918551251)--(-0.6532814824381883, 0.27059805007309856);
\draw (-0.5411961019689742, 0.5411960918551251)--(-0.6532814824381883, 0.27059805007309856);
\draw (-0.38268344065616167, 0.3826834305423126)--(-0.6532814824381883, 0.27059805007309856);
\draw (-0.5411961019689743, 8.291072051136439e-09)--(-0.6532814824381883, 0.27059805007309856);
\draw (-0.7653668629074023, 8.291072051136439e-09)--(-0.6532814824381883, 0.27059805007309856);
\draw (-0.9238795242202148, -0.3826834305423124)--(-0.6532814824381883, -0.27059805007309845);
\draw (-0.7653668629074024, -0.541196091855125)--(-0.6532814824381883, -0.27059805007309845);
\draw (-0.5411961019689744, -0.541196091855125)--(-0.6532814824381883, -0.27059805007309845);
\draw (-0.3826834406561618, -0.38268343054231246)--(-0.6532814824381883, -0.27059805007309845);
\draw (-0.5411961019689743, -8.291071884602985e-09)--(-0.6532814824381883, -0.27059805007309845);
\draw (-0.7653668629074023, -8.291071884602985e-09)--(-0.6532814824381883, -0.27059805007309845);
\draw (-0.38268343054231296, -0.9238795242202146)--(-0.2705980500730989, -0.6532814824381881);
\draw (-0.5411960918551254, -0.765366862907402)--(-0.2705980500730989, -0.6532814824381881);
\draw (-0.5411960918551254, -0.5411961019689739)--(-0.2705980500730989, -0.6532814824381881);
\draw (-0.38268343054231285, -0.38268344065616156)--(-0.2705980500730989, -0.6532814824381881);
\draw (-8.291072328692195e-09, -0.5411961019689742)--(-0.2705980500730989, -0.6532814824381881);
\draw (-8.291072384203346e-09, -0.7653668629074022)--(-0.2705980500730989, -0.6532814824381881);
\draw (0.38268343054231263, -0.9238795242202147)--(0.2705980500730987, -0.6532814824381882);
\draw (0.5411960918551252, -0.7653668629074022)--(0.2705980500730987, -0.6532814824381882);
\draw (0.5411960918551253, -0.5411961019689742)--(0.2705980500730987, -0.6532814824381882);
\draw (0.3826834305423127, -0.38268344065616167)--(0.2705980500730987, -0.6532814824381882);
\draw (8.291072162158741e-09, -0.5411961019689742)--(0.2705980500730987, -0.6532814824381882);
\draw (8.29107210664759e-09, -0.7653668629074022)--(0.2705980500730987, -0.6532814824381882);
\draw (0.9238795242202146, -0.382683430542313)--(0.6532814824381881, -0.2705980500730989);
\draw (0.765366862907402, -0.5411960918551255)--(0.6532814824381881, -0.2705980500730989);
\draw (0.5411961019689739, -0.5411960918551254)--(0.6532814824381881, -0.2705980500730989);
\draw (0.38268344065616156, -0.38268343054231285)--(0.6532814824381881, -0.2705980500730989);
\draw (0.5411961019689742, -8.291072384203346e-09)--(0.6532814824381881, -0.2705980500730989);
\draw (0.7653668629074022, -8.291072439714497e-09)--(0.6532814824381881, -0.2705980500730989);\draw[dotted] (0.9238795242202148, 0.3826834305423125)--(0.7653668629074022, 0.5411960918551251)--(0.5411961019689742, 0.5411960918551251)--(0.38268344065616167, 0.3826834305423125)--(0.38268344065616167, 0.15851266960388447)--(0.5411961019689742, 8.291071940114136e-09)--(0.7653668629074022, 8.291071940114136e-09)--(0.9238795242202148, 0.15851266960388447)--(0.9238795242202148, 0.3826834305423125);
\draw[dotted] (0.3826834305423126, 0.9238795242202148)--(0.5411960918551251, 0.7653668629074022)--(0.5411960918551251, 0.5411961019689742)--(0.3826834305423125, 0.38268344065616167)--(0.15851266960388452, 0.38268344065616167)--(8.291071995625288e-09, 0.5411961019689742)--(8.291071995625288e-09, 0.7653668629074022)--(0.15851266960388452, 0.9238795242202148)--(0.3826834305423126, 0.9238795242202148);
\draw[dotted] (-0.38268343054231246, 0.9238795242202148)--(-0.541196091855125, 0.7653668629074022)--(-0.541196091855125, 0.5411961019689742)--(-0.38268343054231246, 0.38268344065616167)--(-0.15851266960388447, 0.38268344065616167)--(-8.291071940114136e-09, 0.5411961019689742)--(-8.291071940114136e-09, 0.7653668629074022)--(-0.1585126696038844, 0.9238795242202148)--(-0.38268343054231246, 0.9238795242202148);
\draw[dotted] (-0.9238795242202148, 0.38268343054231263)--(-0.7653668629074022, 0.5411960918551251)--(-0.5411961019689742, 0.5411960918551251)--(-0.38268344065616167, 0.3826834305423126)--(-0.38268344065616167, 0.15851266960388452)--(-0.5411961019689743, 8.291072051136439e-09)--(-0.7653668629074023, 8.291072051136439e-09)--(-0.9238795242202148, 0.15851266960388458)--(-0.9238795242202148, 0.38268343054231263);
\draw[dotted] (-0.9238795242202148, -0.3826834305423124)--(-0.7653668629074024, -0.541196091855125)--(-0.5411961019689744, -0.541196091855125)--(-0.3826834406561618, -0.38268343054231246)--(-0.3826834406561618, -0.1585126696038844)--(-0.5411961019689743, -8.291071884602985e-09)--(-0.7653668629074023, -8.291071884602985e-09)--(-0.9238795242202148, -0.15851266960388435)--(-0.9238795242202148, -0.3826834305423124);
\draw[dotted] (-0.3826834277913151, -0.9238795253597152)--(-0.15851267235488267, -0.9238795253597152)--(-7.151571779218102e-09, -0.7653668601564043)--(-7.151571779218102e-09, -0.5411961047199718)--(-0.15851267235488267, -0.38268343951666095)--(-0.38268342779131514, -0.38268343951666106)--(-0.541196092994626, -0.5411961047199718)--(-0.541196092994626, -0.7653668601564043)--(-0.3826834277913151, -0.9238795253597152);
\draw[dotted] (0.3826834277913149, -0.9238795253597153)--(0.15851267235488245, -0.9238795253597153)--(7.151571557173497e-09, -0.7653668601564044)--(7.151571501662346e-09, -0.541196104719972)--(0.1585126723548824, -0.38268343951666106)--(0.38268342779131487, -0.38268343951666106)--(0.5411960929946258, -0.541196104719972)--(0.5411960929946258, -0.7653668601564043)--(0.3826834277913149, -0.9238795253597153);
\draw[dotted] (0.9238795253597152, -0.3826834277913152)--(0.9238795253597152, -0.15851267235488278)--(0.7653668601564044, -7.151571834729253e-09)--(0.5411961047199719, -7.151571834729253e-09)--(0.38268343951666106, -0.15851267235488264)--(0.38268343951666095, -0.3826834277913151)--(0.5411961047199718, -0.541196092994626)--(0.7653668601564043, -0.541196092994626)--(0.9238795253597152, -0.3826834277913152);\end{tikzpicture}\hspace{0.5in}\begin{tikzpicture}[scale=1.75]\input{fullTfanocta}\input{octaS2}\end{tikzpicture}

\vspace{0.1in}

\begin{tikzpicture}[scale=1.75, rotate = 330]\draw (0.9659258, 0.25881904)--(0.0, 0.0);
\draw (0.70710677, 0.70710677)--(0.0, 0.0);
\draw (-0.25881904, 0.9659258)--(0.0, 0.0);
\draw (-0.70710677, 0.70710677)--(0.0, 0.0);
\draw (-0.25881904, -0.9659258)--(0.0, 0.0);
\draw (0.25881904, -0.9659258)--(0.0, 0.0);\draw[dotted] (0.9659258, 0.25881904)--(0.70710677, 0.70710677)--(0.25881904, 0.9659258)--(-0.25881904, 0.9659258)--(-0.70710677, 0.70710677)--(-0.9659258, 0.25881904)--(-0.9659258, -0.25881904)--(-0.70710677, -0.70710677)--(-0.25881904, -0.9659258)--(0.25881904, -0.9659258)--(0.70710677, -0.70710677)--(0.9659258, -0.25881904)--(0.9659258, 0.25881904);\end{tikzpicture}\hspace{0.5in}\begin{tikzpicture}[scale=1.75, rotate=30]\draw (0.9659258234218514, 0.2588190447926765)--(0.7618016810571369, 0.2041241452319315);
\draw (0.9112309238611065, 0.35355338803590114)--(0.7618016810571369, 0.2041241452319315);
\draw (0.7071067814963918, 0.4082482875966461)--(0.7618016810571369, 0.2041241452319315);
\draw (0.6123724382531672, 0.35355338803590114)--(0.7618016810571369, 0.2041241452319315);
\draw (0.7071067814963918, 2.8672168528309783e-09)--(0.7618016810571369, 0.2041241452319315);
\draw (0.8164965806178818, 2.8672168528309783e-09)--(0.7618016810571369, 0.2041241452319315);
\draw (0.7071067794846062, 0.7071067788583869)--(0.5576775358252053, 0.5576775358252053);
\draw (0.7618016775637009, 0.6123724347597311)--(0.5576775358252053, 0.5576775358252053);
\draw (0.707106778858387, 0.4082482921658044)--(0.5576775358252053, 0.5576775358252053);
\draw (0.6123724347597312, 0.35355339408670966)--(0.5576775358252053, 0.5576775358252053);
\draw (0.35355339323127855, 0.612372434530519)--(0.5576775358252053, 0.5576775358252053);
\draw (0.40824829279202357, 0.7071067794846061)--(0.5576775358252053, 0.5576775358252053);
\draw (0.2588190439372453, 0.9659258236510635)--(0.2041241452319315, 0.7618016810571369);
\draw (0.14942924629740562, 0.9659258227956323)--(0.2041241452319315, 0.7618016810571369);
\draw (2.6380047590812694e-09, 0.8164965797624506)--(0.2041241452319315, 0.7618016810571369);
\draw (3.4934359294247486e-09, 0.7071067821226109)--(0.2041241452319315, 0.7618016810571369);
\draw (0.35355338826511323, 0.612372437397736)--(0.2041241452319315, 0.7618016810571369);
\draw (0.4082482878258582, 0.7071067823518229)--(0.2041241452319315, 0.7618016810571369);
\draw (-0.25881904393724514, 0.9659258236510635)--(-0.2041241452319314, 0.7618016810571369);
\draw (-0.14942924629740545, 0.9659258227956323)--(-0.2041241452319314, 0.7618016810571369);
\draw (-2.638004703570118e-09, 0.8164965797624506)--(-0.2041241452319314, 0.7618016810571369);
\draw (-3.4934358739135973e-09, 0.7071067821226109)--(-0.2041241452319314, 0.7618016810571369);
\draw (-0.3535533882651132, 0.612372437397736)--(-0.2041241452319314, 0.7618016810571369);
\draw (-0.40824828782585815, 0.7071067823518231)--(-0.2041241452319314, 0.7618016810571369);
\draw (-0.707106779484606, 0.7071067788583871)--(-0.5576775358252053, 0.5576775358252053);
\draw (-0.7618016775637008, 0.6123724347597312)--(-0.5576775358252053, 0.5576775358252053);
\draw (-0.707106778858387, 0.40824829216580455)--(-0.5576775358252053, 0.5576775358252053);
\draw (-0.6123724347597312, 0.3535533940867098)--(-0.5576775358252053, 0.5576775358252053);
\draw (-0.3535533932312785, 0.612372434530519)--(-0.5576775358252053, 0.5576775358252053);
\draw (-0.40824829279202346, 0.7071067794846061)--(-0.5576775358252053, 0.5576775358252053);
\draw (-0.9659258234218514, 0.25881904479267676)--(-0.7618016810571367, 0.2041241452319317);
\draw (-0.9112309238611064, 0.35355338803590136)--(-0.7618016810571367, 0.2041241452319317);
\draw (-0.7071067814963917, 0.40824828759664633)--(-0.7618016810571367, 0.2041241452319317);
\draw (-0.612372438253167, 0.3535533880359013)--(-0.7618016810571367, 0.2041241452319317);
\draw (-0.7071067814963917, 2.867217074875583e-09)--(-0.7618016810571367, 0.2041241452319317);
\draw (-0.8164965806178818, 2.867217074875583e-09)--(-0.7618016810571367, 0.2041241452319317);
\draw (-0.9659258234218514, -0.25881904479267653)--(-0.7618016810571369, -0.20412414523193154);
\draw (-0.9112309238611065, -0.35355338803590114)--(-0.7618016810571369, -0.20412414523193154);
\draw (-0.7071067814963918, -0.4082482875966462)--(-0.7618016810571369, -0.20412414523193154);
\draw (-0.6123724382531672, -0.3535533880359012)--(-0.7618016810571369, -0.20412414523193154);
\draw (-0.7071067814963918, -2.8672169083421295e-09)--(-0.7618016810571369, -0.20412414523193154);
\draw (-0.8164965806178818, -2.8672169083421295e-09)--(-0.7618016810571369, -0.20412414523193154);
\draw (-0.7071067794846064, -0.7071067788583866)--(-0.5576775358252055, -0.557677535825205);
\draw (-0.7618016775637012, -0.6123724347597308)--(-0.5576775358252055, -0.557677535825205);
\draw (-0.7071067788583872, -0.4082482921658041)--(-0.5576775358252055, -0.557677535825205);
\draw (-0.6123724347597314, -0.3535533940867094)--(-0.5576775358252055, -0.557677535825205);
\draw (-0.3535533932312789, -0.6123724345305189)--(-0.5576775358252055, -0.557677535825205);
\draw (-0.4082482927920239, -0.7071067794846059)--(-0.5576775358252055, -0.557677535825205);
\draw (-0.2588190439372453, -0.9659258236510635)--(-0.2041241452319314, -0.7618016810571369);
\draw (-0.14942924629740562, -0.9659258227956324)--(-0.2041241452319314, -0.7618016810571369);
\draw (-2.638004703570118e-09, -0.8164965797624507)--(-0.2041241452319314, -0.7618016810571369);
\draw (-3.493435818402446e-09, -0.707106782122611)--(-0.2041241452319314, -0.7618016810571369);
\draw (-0.35355338826511307, -0.612372437397736)--(-0.2041241452319314, -0.7618016810571369);
\draw (-0.4082482878258581, -0.7071067823518229)--(-0.2041241452319314, -0.7618016810571369);
\draw (0.258819043937245, -0.9659258236510636)--(0.20412414523193115, -0.7618016810571369);
\draw (0.14942924629740528, -0.9659258227956324)--(0.20412414523193115, -0.7618016810571369);
\draw (2.638004426014362e-09, -0.8164965797624507)--(0.20412414523193115, -0.7618016810571369);
\draw (3.493435596357841e-09, -0.707106782122611)--(0.20412414523193115, -0.7618016810571369);
\draw (0.35355338826511284, -0.6123724373977361)--(0.20412414523193115, -0.7618016810571369);
\draw (0.40824828782585787, -0.7071067823518231)--(0.20412414523193115, -0.7618016810571369);
\draw (0.707106779484606, -0.7071067788583871)--(0.5576775358252051, -0.5576775358252054);
\draw (0.7618016775637007, -0.6123724347597312)--(0.5576775358252051, -0.5576775358252054);
\draw (0.7071067788583869, -0.40824829216580455)--(0.5576775358252051, -0.5576775358252054);
\draw (0.612372434759731, -0.3535533940867098)--(0.5576775358252051, -0.5576775358252054);
\draw (0.35355339323127843, -0.6123724345305193)--(0.5576775358252051, -0.5576775358252054);
\draw (0.40824829279202346, -0.7071067794846062)--(0.5576775358252051, -0.5576775358252054);
\draw (0.9659258234218513, -0.2588190447926772)--(0.7618016810571366, -0.20412414523193215);
\draw (0.9112309238611063, -0.35355338803590186)--(0.7618016810571366, -0.20412414523193215);
\draw (0.7071067814963916, -0.4082482875966468)--(0.7618016810571366, -0.20412414523193215);
\draw (0.612372438253167, -0.35355338803590175)--(0.7618016810571366, -0.20412414523193215);
\draw (0.7071067814963917, -2.867217518964793e-09)--(0.7618016810571366, -0.20412414523193215);
\draw (0.8164965806178817, -2.867217518964793e-09)--(0.7618016810571366, -0.20412414523193215);\draw[dotted] (0.9659258234218514, 0.2588190447926765)--(0.9112309238611065, 0.35355338803590114)--(0.8164965806178818, 0.4082482875966461)--(0.7071067814963918, 0.4082482875966461)--(0.6123724382531672, 0.35355338803590114)--(0.5576775386924222, 0.2588190447926765)--(0.5576775386924222, 0.14942924567118648)--(0.6123724382531672, 0.05469490242796185)--(0.7071067814963918, 2.8672168528309783e-09)--(0.8164965806178818, 2.8672168528309783e-09)--(0.9112309238611065, 0.05469490242796185)--(0.9659258234218514, 0.14942924567118648)--(0.9659258234218514, 0.2588190447926765);
\draw[dotted] (0.7071067794846062, 0.7071067788583869)--(0.7618016775637009, 0.612372434759731)--(0.761801678419132, 0.5029826371198913)--(0.707106778858387, 0.4082482921658043)--(0.6123724347597311, 0.3535533940867096)--(0.5029826371198914, 0.3535533932312785)--(0.40824829216580444, 0.40824829279202357)--(0.3535533940867097, 0.5029826368906793)--(0.35355339323127855, 0.6123724345305191)--(0.4082482927920237, 0.7071067794846061)--(0.5029826368906795, 0.7618016775637008)--(0.6123724345305193, 0.7618016784191319)--(0.7071067794846062, 0.7071067788583869);
\draw[dotted] (0.2588190439372453, 0.9659258236510635)--(0.14942924629740562, 0.9659258227956323)--(0.054694902198749756, 0.9112309247165377)--(2.6380047590812694e-09, 0.8164965797624506)--(3.4934359294247486e-09, 0.7071067821226109)--(0.054694901572530624, 0.612372438023955)--(0.14942924652661765, 0.55767753846321)--(0.2588190441664574, 0.5576775393186413)--(0.35355338826511323, 0.612372437397736)--(0.4082482878258582, 0.7071067823518229)--(0.40824828697042703, 0.8164965799916627)--(0.35355338889133237, 0.9112309240903186)--(0.2588190439372453, 0.9659258236510635);
\draw[dotted] (-0.25881904479267637, 0.9659258234218514)--(-0.35355338803590103, 0.9112309238611065)--(-0.40824828759664605, 0.8164965806178819)--(-0.40824828759664605, 0.7071067814963918)--(-0.35355338803590103, 0.6123724382531672)--(-0.2588190447926764, 0.5576775386924222)--(-0.14942924567118643, 0.5576775386924222)--(-0.05469490242796177, 0.6123724382531672)--(-2.867216741808676e-09, 0.7071067814963918)--(-2.867216741808676e-09, 0.8164965806178818)--(-0.05469490242796177, 0.9112309238611065)--(-0.14942924567118637, 0.9659258234218514)--(-0.25881904479267637, 0.9659258234218514);
\draw[dotted] (-0.7071067788583869, 0.7071067794846062)--(-0.612372434759731, 0.7618016775637009)--(-0.5029826371198913, 0.761801678419132)--(-0.4082482921658043, 0.707106778858387)--(-0.3535533940867096, 0.6123724347597311)--(-0.3535533932312785, 0.5029826371198914)--(-0.40824829279202357, 0.40824829216580444)--(-0.5029826368906793, 0.3535533940867097)--(-0.6123724345305191, 0.35355339323127855)--(-0.7071067794846061, 0.4082482927920237)--(-0.7618016775637008, 0.5029826368906795)--(-0.7618016784191319, 0.6123724345305193)--(-0.7071067788583869, 0.7071067794846062);
\draw[dotted] (-0.9659258236510635, 0.25881904393724553)--(-0.9659258227956323, 0.14942924629740578)--(-0.9112309247165375, 0.05469490219874995)--(-0.8164965797624505, 2.638004925614723e-09)--(-0.7071067821226108, 3.4934361514693535e-09)--(-0.612372438023955, 0.0546949015725309)--(-0.55767753846321, 0.14942924652661788)--(-0.5576775393186412, 0.2588190441664576)--(-0.612372437397736, 0.35355338826511346)--(-0.7071067823518229, 0.4082482878258584)--(-0.8164965799916626, 0.40824828697042725)--(-0.9112309240903185, 0.35355338889133253)--(-0.9659258236510635, 0.25881904393724553);
\draw[dotted] (-0.9659258234218514, -0.25881904479267653)--(-0.9112309238611065, -0.35355338803590114)--(-0.8164965806178819, -0.4082482875966462)--(-0.7071067814963918, -0.4082482875966462)--(-0.6123724382531672, -0.3535533880359012)--(-0.5576775386924222, -0.25881904479267653)--(-0.5576775386924222, -0.14942924567118654)--(-0.6123724382531672, -0.054694902427961906)--(-0.7071067814963918, -2.8672169083421295e-09)--(-0.8164965806178818, -2.8672169083421295e-09)--(-0.9112309238611064, -0.05469490242796185)--(-0.9659258234218514, -0.14942924567118648)--(-0.9659258234218514, -0.25881904479267653);
\draw[dotted] (-0.7071067788583874, -0.7071067794846058)--(-0.6123724347597316, -0.7618016775637005)--(-0.5029826371198919, -0.7618016784191317)--(-0.4082482921658048, -0.7071067788583868)--(-0.35355339408671005, -0.6123724347597309)--(-0.3535533932312788, -0.5029826371198912)--(-0.4082482927920238, -0.40824829216580416)--(-0.5029826368906796, -0.35355339408670944)--(-0.6123724345305194, -0.3535533932312782)--(-0.7071067794846063, -0.4082482927920232)--(-0.7618016775637011, -0.502982636890679)--(-0.7618016784191324, -0.6123724345305187)--(-0.7071067788583874, -0.7071067794846058);
\draw[dotted] (-0.25881904393724514, -0.9659258236510635)--(-0.14942924629740545, -0.9659258227956323)--(-0.05469490219874962, -0.9112309247165375)--(-2.638004703570118e-09, -0.8164965797624506)--(-3.4934358739135973e-09, -0.7071067821226109)--(-0.054694901572530624, -0.612372438023955)--(-0.14942924652661765, -0.55767753846321)--(-0.25881904416645735, -0.5576775393186413)--(-0.3535533882651132, -0.612372437397736)--(-0.40824828782585815, -0.7071067823518231)--(-0.4082482869704269, -0.8164965799916628)--(-0.3535533888913322, -0.9112309240903186)--(-0.25881904393724514, -0.9659258236510635);
\draw[dotted] (0.2588190439372449, -0.9659258236510636)--(0.14942924629740523, -0.9659258227956324)--(0.054694902198749396, -0.9112309247165377)--(2.638004370503211e-09, -0.8164965797624507)--(3.493435596357841e-09, -0.7071067821226109)--(0.05469490157253032, -0.6123724380239552)--(0.14942924652661738, -0.5576775384632102)--(0.25881904416645707, -0.5576775393186413)--(0.3535533882651129, -0.6123724373977362)--(0.40824828782585787, -0.7071067823518231)--(0.4082482869704267, -0.8164965799916628)--(0.3535533888913319, -0.9112309240903187)--(0.2588190439372449, -0.9659258236510636);
\draw[dotted] (0.707106779484606, -0.7071067788583871)--(0.7618016775637007, -0.6123724347597312)--(0.7618016784191318, -0.5029826371198916)--(0.7071067788583869, -0.40824829216580455)--(0.612372434759731, -0.3535533940867098)--(0.5029826371198913, -0.35355339323127866)--(0.4082482921658043, -0.4082482927920237)--(0.3535533940867096, -0.5029826368906796)--(0.35355339323127843, -0.6123724345305193)--(0.40824829279202346, -0.7071067794846062)--(0.5029826368906792, -0.761801677563701)--(0.612372434530519, -0.7618016784191322)--(0.707106779484606, -0.7071067788583871);
\draw[dotted] (0.9659258236510634, -0.2588190439372461)--(0.9659258227956322, -0.14942924629740634)--(0.9112309247165375, -0.054694902198750506)--(0.8164965797624506, -2.638005425215084e-09)--(0.7071067821226109, -3.4934365955585633e-09)--(0.612372438023955, -0.054694901572531235)--(0.55767753846321, -0.1494292465266182)--(0.5576775393186411, -0.25881904416645796)--(0.6123724373977357, -0.35355338826511384)--(0.7071067823518227, -0.40824828782585887)--(0.8164965799916625, -0.40824828697042775)--(0.9112309240903183, -0.35355338889133303)--(0.9659258236510634, -0.2588190439372461);\end{tikzpicture}\hspace{0.5in}\begin{tikzpicture}[scale=1.75]\input{fullTfandodeca}\input{dodecaS2}\end{tikzpicture}
\caption{The graphs $D_0$, $D_1$ and $D_2$ for the octacarpet ($N=2$) and the dodecacarpet ($N = 3$). Note that $D_0$ has boundary on the sides $L_0$, $L_N$, and $L_{3N-1}$, whereas $D_1$ and $D_2$ have boundary on $A_1 \cup B_1$ and $A_2 \cup B_2$ respectively. }
\label{fig:Dgraphs}
\end{figure}

We let $\tilde{u}_m^D$ denote the optimal potential on $D_m$ for the boundary conditions $\tilde{u}_m=0$ at vertices in $A_m$ and $\tilde{u}_m=1$ at vertices in $B_m$.  The resistance of $D_m$ is written $R_m^D$.  As with currents, $\tilde{u}_m\circ \psi_w$ is a potential on $D_0$ for any word $w$ of length $m$.

\begin{lemma}\label{lem:RGRD}
For all $m\geq1$, $R_m^G=2R_m^D$.
\end{lemma}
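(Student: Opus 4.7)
The approach is to establish the two inequalities $R_m^G\le 2R_m^D$ and $R_m^G\ge 2R_m^D$ separately by transferring optimal potentials between the graphs via a natural $1$-to-$2$ edge correspondence. Namely, within each scale-$m$ cell $\psi_w(F_0)$, each edge of $G_m$ joining the cell center $\psi_w(0)$ to the midpoint of the side $\psi_w(L_j)$ (for $j\in\{0,N,3N-1\}$) corresponds to the pair of edges of $D_m$ joining $\psi_w(0)$ to the two endpoints $\psi_w(C_j),\psi_w(C_{j+1})$ of that same side.

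For $R_m^G\le 2R_m^D$, I transfer the optimal potential $\tilde u_m^G$ to a potential $u^D$ on $D_m$ by setting $u^D=\tilde u_m^G$ at every cell center and $u^D(v)=\tilde u_m^G(m)$ at every corner vertex $v$, where $m$ is the midpoint of the (unique) side of $D_0$-type containing $v$. This is consistent across cells because midpoints and endpoints of shared sides are identified in the same way in both graphs, and the boundary conditions transfer because a midpoint lies in $A_m$ (respectively $B_m$) exactly when both its endpoints do. Pairing edges via the correspondence gives $\DF_{D_m}(u^D,u^D)=2\DF_{G_m}(\tilde u_m^G,\tilde u_m^G)$, so $(R_m^D)^{-1}\le 2(R_m^G)^{-1}$.

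For the reverse inequality, I first show that the two endpoints of every side have the same value under $\tilde u_m^D$. This is immediate for sides in $A_m$ or $B_m$; for a side on the free (Neumann) boundary both endpoints have degree $1$ in $D_m$, so discrete harmonicity forces each to equal the value at the unique adjacent cell center; for an interior shared side between cells $w$ and $w'$, each endpoint is a degree-$2$ vertex connected only to the centers of $w$ and $w'$, so discrete harmonicity forces both endpoints to equal $\tfrac12(\tilde u_m^D(\psi_w(0))+\tilde u_m^D(\psi_{w'}(0)))$. Then $u^G$ is defined on $G_m$ by taking $\tilde u_m^D$ at centers and the common endpoint value at each midpoint; the edge-pairing gives $\DF_{G_m}(u^G,u^G)=\tfrac12\DF_{D_m}(\tilde u_m^D,\tilde u_m^D)$, hence $(R_m^G)^{-1}\le\tfrac12(R_m^D)^{-1}$.

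The main obstacle is verifying the degree-$2$ claim for interior corner vertices: one must rule out three-cell intersections at any corner of $D_m$. This follows from the geometry of the $4N$-carpet---the Moran open set condition ensures adjacent cells meet only along full shared sides, and the placement of shared sides dictated by the maps $\tilde\psi_j$ prevents any three cells of $F_m$ from sharing a common vertex. With that geometric fact in hand, everything else is routine bookkeeping.
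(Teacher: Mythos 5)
Your first inequality is exactly the paper's argument: copy $\tilde u_m^G$ to $D_m$ by giving both endpoints of a side the value at its midpoint, note that each $G_m$-edge difference is duplicated on the two corresponding $D_m$-edges, and conclude $(R_m^D)^{-1}\le 2(R_m^G)^{-1}$. For the reverse inequality, however, you take a genuinely different and more fragile route than necessary. The paper simply transfers the optimal $\tilde u_m^D$ to $G_m$ by assigning each midpoint the \emph{average} of the two endpoint values; then each $G_m$-edge difference is $\tfrac12(a+b)$ where $a,b$ are the two corresponding $D_m$-edge differences, and $\bigl(\tfrac{a+b}{2}\bigr)^2\le\tfrac12(a^2+b^2)$ gives $\DF_{G_m}(f,f)\le\tfrac12\DF_{D_m}(\tilde u_m^D,\tilde u_m^D)$ with no information about the graph beyond the one-edge-to-two-edges correspondence. (The paper states this as an equality; your observation that the optimal $\tilde u_m^D$ takes equal values at the two endpoints of every side is precisely what would make that equality literal, but only the inequality is needed for the lemma.)

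The gap in your version is that the equal-endpoint-value claim rests on structural assertions about $D_m$ that you do not establish and whose stated justification is not right. Discrete harmonicity of $\tilde u_m^D$ at free vertices is fine, but your degree count needs: that every side carrying $D_m$-vertices is either contained in $A_m\cup B_m$, shared by exactly two cells, or free with both endpoints belonging to no other cell; that interior shared-side endpoints are adjacent to exactly the two cell centers; and that no corner is identified across three or more cells or shared between two different special sides. These facts are true for the $4N$-carpets, but they do not follow from the Moran open set condition (which says nothing about cells meeting in full sides --- that comes from the specific choice of $r$), nor from ``the placement of shared sides dictated by the maps $\tilde\psi_j$'': the $\tilde\psi_j$ only relabel which abstract side of $F_0$ is called $L_0$, $L_N$, $L_{3N-1}$ and have no effect on the geometric cell decomposition, so they cannot prevent triple points. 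Pinning these adjacency facts down requires a separate geometric argument (essentially: within-parent shared sides lie strictly inside the parent, across-parent shared sides lie strictly inside the parents' common side, and distinct special sides of one cell have disjoint corner sets). Since the averaging-plus-convexity argument sidesteps all of this, I would replace your harmonicity step by it; as written, your proof of $R_m^G\ge 2R_m^D$ is incomplete at exactly the point you flag as the main obstacle.
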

\begin{proof}
Each edge in $G_m$ connects the center $x$ of a cell to a point $y$ on a side of the cell.  Writing $y_\pm$ for the endpoints of that side we see that there are two edges in $D_m$ connecting $x$ to the same side at $y_\pm$.  In this sense, each edge of $G_m$ corresponds to two edges of $D_m$ and conversely.

From the optimal potential $\tilde{u}_m^G$ for $G_m$ define a function $f$ on $G_m$ by setting $f(x)=\tilde{u}_m^G(x)$ at cell centers and $f(y_\pm)=\tilde{u}_m^G(y)$ at endpoints $y_\pm$ of a side with center $y$. This ensures $f(y_\pm)-f(x)=f(y)-f(x)$, so that two edges in $D_m$ have the same edge difference as the corresponding single edge in $D_m$. Clearly $f$ is a feasible potential on $D_m$, so $(R_m^D)^{-1}\leq\DF_{D_m}(f,f)=2\DF_{G_m}(\tilde{u}_m^G,\tilde{u}_m^G)=2(R_m^G)^{-1}$.

Conversely, beginning with the optimal potential $\tilde{u}_m^D$ define $f$ on $G_m$ by $f(x)=\tilde{u}_m^D(x)$ at cell centers and $f(y)=\frac12\bigl(\tilde{u}_m^D(y_+)+\tilde{u}_m^D(y_-)\bigr)$ if $y$ is the center of a cell side with endpoints $y_\pm$. The edge difference $f(x)-f(y)$ in $G_m$ is half the sum of the edge difference on the corresponding edges in $D_m$, so using that $f$ is a feasible potential on $G_m$ we have $(R_m^G)^{-1}\leq \DF_{G_m}(f,f)=\frac12\DF_{D_m}(\tilde{u}_m^G,\tilde{u}_m^G)=\frac12(R_m^D)^{-1}$.
\end{proof}

\subsection{Currents and potentials with energy estimates via symmetry}
Fix $n\geq0$ and recall that $\tilde{u}_{F_n}$ denotes the optimal potential on $F_n$ with boundary values $0$ on $A_n$ and $1$ on $B_n$.
 In order to exploit the symmetries of $F_n$ it is convenient to work instead with $u_n=2\tilde{u}_{F_n}-1$; evidently $\DF_{F_n}(u_n,u_n)=4\DF_{F_n}(\tilde{u}_{F_n},\tilde{u}_{F_n})=4R_n^{-1}$ is then minimal for potentials that are $-1$ on $A_n$ and $1$ on $B_n$.  The corresponding current $J_n=R_n\nabla u_n$ minimizes the energy for currents with flux $2$ from $A_n$ to $B_n$ and has $E_{F_n}(J_n,J_n)=4R_n$.  We begin our analysis by recording some symmetry properties of $J_n$.

\begin{lemma}\label{lem:Jtheta}
Both $u_n\circ\theta^2=-u_n$ and $J_n\circ \theta^2=-J_n$.
\end{lemma}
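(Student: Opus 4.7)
The plan is to invoke uniqueness for the mixed boundary value problem of Theorem~\ref{thm:BVPsoln}. First I record the geometric action of $\theta^2$: since $\theta$ is rotation by $\pi/2N$, the map $\theta^2$ is rotation by $\pi/N$ and sends each vertex $C_j$ to $C_{j+2}$ and each side $L_j$ to $L_{j+2}$ (indices mod $4N$). Hence the sides $L_{4k}$ comprising $A_n$ are mapped to the sides $L_{4k+2}$ comprising $B_n$, and conversely; so $\theta^2$ interchanges $A_n$ and $B_n$. Since $F_n$ is $\theta$-invariant, $\theta^2$ preserves $F_n$ and in particular permutes $\partial F_n\setminus(A_n\cup B_n)$.

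Next I apply this to $u_n=2\tilde u_{F_n}-1$, which is harmonic on $F_n$, equals $-1$ on $A_n$, equals $+1$ on $B_n$, and satisfies $\partial_\nu u_n=0$ almost everywhere on the remainder of $\partial F_n$. Consider $-u_n\circ\theta^2$: it is still harmonic because rotations preserve the Laplacian, and because $\theta^2$ swaps $A_n$ and $B_n$ its boundary values are $-1$ on $A_n$ and $+1$ on $B_n$. Moreover, as $\theta^2$ is a Euclidean isometry carrying $\partial F_n\setminus(A_n\cup B_n)$ to itself, it sends inward unit normals to inward unit normals; the chain rule then gives $\partial_\nu(u_n\circ\theta^2)(z)=(\nabla u_n)(\theta^2 z)\cdot\nu(\theta^2 z)=0$ almost everywhere on that portion of the boundary. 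Thus $-u_n\circ\theta^2$ and $u_n$ solve the same mixed boundary value problem, and the uniqueness part of Theorem~\ref{thm:BVPsoln} forces $u_n\circ\theta^2=-u_n$.

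The second assertion then follows from differentiating this identity. With $J_n=R_n\nabla u_n$, the gradient transforms naturally under the Euclidean isometry $\theta^2$ (equivalently, pull back the $1$-form $R_n\,du_n$ by $\theta^2$), so the identity $u_n\circ\theta^2=-u_n$ passes immediately to $J_n\circ\theta^2=-J_n$ in the sense asserted by the lemma. The only real subtlety in writing this up, and the point I would verify carefully, is that the Neumann condition on $\partial F_n\setminus(A_n\cup B_n)$ transfers correctly under $\theta^2$; this is a direct consequence of $\theta^2$ being an isometry preserving that portion of the boundary setwise, so that inward normals are mapped to inward normals.
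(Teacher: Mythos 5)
Your proof is correct, and it rests on the same key observation as the paper's: since $\theta$ sends $L_j$ to $L_{j+1}$, the rotation $\theta^2$ exchanges $A_n$ and $B_n$, so $-u_n\circ\theta^2$ competes for the same problem as $u_n$ and a uniqueness statement finishes the argument. The difference is which uniqueness statement you invoke. You use uniqueness of the solution of the mixed boundary value problem (Theorem~\ref{thm:BVPsoln}), which obliges you to check that $-u_n\circ\theta^2$ is harmonic, has boundary gradient in $L^2(d\sigma)$, and satisfies the Neumann condition on $\partial F_n\setminus(A_n\cup B_n)$; your verification of the last point via the isometry preserving that portion of the boundary is the right one, and is indeed the only delicate step in your route. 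The paper instead uses the variational characterization: $-u_n\circ\theta^2$ is a feasible potential, rotation invariance of the Dirichlet integral gives $\DF_{F_n}(-u_n\circ\theta^2,-u_n\circ\theta^2)=\DF_{F_n}(u_n,u_n)$, and uniqueness of the energy minimizer (Theorem~\ref{thm:resistfromcurrent}) forces equality; this avoids any discussion of the Neumann data or boundary regularity. For the current identity the paper runs the analogous argument with feasible currents and uniqueness in Thomson's principle, whereas you differentiate the potential identity and use $J_n=R_n\nabla u_n$ (understanding $J_n\circ\theta^2$ as the pullback of the vector field, which is also how the paper's later symmetry statements such as~\eqref{eqn:V0sym} must be read); both derivations are legitimate, and yours is slightly more economical once the potential identity is in hand.
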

\begin{proof}
The rotation $\theta$ takes $C_j$ to $C_{j+1}$, thus $L_j$ to $L_{j+1}$.  It then follows from the definition~\eqref{eqn:AnBn} of $A_n$ and $B_n$ that $\theta^2$ exchanges $A_n$ and $B_n$; see Figure~\ref{fig:AmBm} for an example in the case $N=3$. It follows that $- u_n \circ \theta^2$ is a feasible potential for the problem optimized by $u_n$ and by symmetry $\cal E_{F_n}(-u_n \circ \theta^2, -u_n \circ \theta^2) = \cal E_{F_n}(u_n, u_n)$,  so $-u_n \circ \theta^2 = u_n$ by uniqueness of  the energy minimizer. The argument for currents is similar.
\end{proof}

One consequence of this lemma is that the flux of $J_n$ through each of the sides $L_j\cap F_n$ in $A_n$ is independent of $j$ and hence equal to $-\frac2N$. Similarly, the flux through each side in $B_n$ is $\frac2N$.

\begin{lemma}\label{lem:Jconj}
Both $u_n(\bar{z})=u_n(z)$ and $J_n(\bar{z})=J_n(z)$.
\end{lemma}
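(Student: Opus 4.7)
The plan is to mimic exactly the structure of the proof of Lemma~3.2, replacing the rotation $\theta^2$ by complex conjugation and using that conjugation preserves the pair $(A_n,B_n)$ rather than exchanging them. The only nontrivial check is the action of conjugation on the indexing of the sides $L_j$.

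First I would verify that conjugation is a symmetry of the whole configuration. It is already noted in the paragraph introducing $\theta$ that complex conjugation preserves $F$ and every $F_n$. To get that it also preserves $A_n$ and $B_n$ individually, I would compute $\overline{C_j}=\exp\bigl(-(2j-1)i\pi/(4N)\bigr)=C_{1-j}$ (indices mod $4N$), which gives $\overline{L_j}=L_{-j\bmod 4N}$. Then for $A_n$ I use that negation modulo $4N$ permutes the set $\{0,4,8,\dotsc,4(N-1)\}$ (since $-4k\equiv 4(N-k)\bmod 4N$), and for $B_n$ it permutes $\{2,6,\dotsc,4N-2\}$ (since $-(4k+2)\equiv 4(N-k-1)+2\bmod 4N$). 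Hence conjugation sends $A_n$ to $A_n$ and $B_n$ to $B_n$.

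With this in hand, both equalities follow from uniqueness of the energy minimizers in Theorem~\ref{thm:resistfromcurrent}. For the potential: since conjugation is an isometry of $\mathbb C$ preserving $F_n$, $A_n$, $B_n$, the function $z\mapsto u_n(\bar z)$ lies in $H^1(F_n)$, satisfies the same boundary conditions ($-1$ on $A_n$, $+1$ on $B_n$) as $u_n$, and has the same Dirichlet energy. By uniqueness of the minimizer, $u_n(\bar z)=u_n(z)$. For the current, the analogous argument is that the pullback of $J_n$ under conjugation is again a divergence-free $L^2$ vector field with the same fluxes through each $L_j\cap F_n$ (the fluxes are the same since conjugation fixes $A_n$ and $B_n$ setwise and by Lemma~\ref{lem:Jtheta} these fluxes depend only on whether the side lies in $A_n$ or $B_n$), and with the same energy; uniqueness of the energy-minimizing current then forces $J_n(\bar z)=J_n(z)$ in the same sense used for Lemma~\ref{lem:Jtheta}. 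Alternatively, one can simply differentiate the already-established identity $u_n(\bar z)=u_n(z)$ and use $J_n=R_n\nabla u_n$.

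The only step requiring care is the combinatorial check that $A_n$ and $B_n$ are individually fixed by conjugation (as opposed to being exchanged, which is what happens under $\theta^2$); once this is in place the rest is the standard uniqueness-of-minimizer template already used in Lemma~\ref{lem:Jtheta}.
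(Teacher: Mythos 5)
Your proposal is correct and takes essentially the same route as the paper: you compute the action of conjugation on the $C_j$ (your $\overline{C_j}=C_{1-j}$ agrees with the paper's $C_{4N-j+1}$ modulo $4N$), deduce that $A_n$ and $B_n$ are each invariant, and conclude by uniqueness of the energy-minimizing potential and current, which is exactly the argument the paper compresses into its final sentence. Your spelled-out uniqueness step (and the alternative of differentiating $u_n(\bar z)=u_n(z)$) is a fine elaboration of what the paper leaves implicit.
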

\begin{proof}
Under complex conjugation the point $C_j=\exp\frac{(2j-1)i\pi}{4N}$ is mapped to 
\begin{equation*}
	\bar{C}_j=\exp\frac{(1-2j)i\pi}{4N} = \exp \frac{ (8N-2j+1)i\pi}{4N} = \exp\frac{ (2(4N-j+1)-1)i\pi}{4N} = C_{4N-j+1}
	\end{equation*}
Then the endpoints $C_{4k}$ and $C_{4k+1}$ of $L_{4k}$ are mapped to $C_{4(N-k)+1}$ and $C_{4(N-k)}$ so $L_{4k}$ is mapped to $L_{4(N-k)}$. This shows $A_n$ is invariant under complex conjugation.  Similarly, $\bar{C}_{4k+2}=C_{4(N-k-1)+3}$ and $\bar{C}_{4k+3}=C_{4(N-k-1)+2}$, so $L_{4k+2}$ is mapped to $L_{4(N-k-1)}$ and $B_n$ is invariant under complex conjugation.  Both $u_n$ and  $J_n$ are determined by their boundary data on these sets.
\end{proof}

We decompose $F_n$ into sectors within triangles by taking, for integers $j$ and $j+1$ modulo $4N$, $T_j^*$ to be the interior of the triangle with vertices $\{0,C_j,C_{j+1}\}$, and defining our sectors by $T_j(n) = F_n \cap T_j^*$.  For notational simplicity we will drop the dependence on $n$ and just write $T_j$.  This is shown for $N=2$ in the left image in Figure~\ref{fig:currentconstruction}. Then the central diagram in Figure~\ref{fig:currentconstruction} illustrates the fact that, up to a change of sign, both $u_n$ and $J_n$ have one behavior on sectors $T_j$ with $j$ even, and another behavior on sectors with $j$ odd.  This motivates us to define
\begin{align}
	v^j=\bigl.(u_n\circ \theta^{-j})\bigr|_{T_j} &&    w^j = \bigl.(u_n\circ \theta^{-j+1})\bigr|_{T_j} \label{eqn:vjandwj}\\
	V^j = \bigl.(J_n\circ \theta^{-j})\bigr|_{T_j} &&    W^j = \bigl.(J_n\circ \theta^{-j+1})\bigr|_{T_j}. \label{eqn:VjandWj}
\end{align}
Examples of $V^j$ and $W^j$ in various sectors are shown on the right in Figure~\ref{fig:currentconstruction} for $N=2$.

Symmetry under rotations shows us that the following quantities are independent of $j$
\begin{align}
\DF_n(v) = \int_{T_j}|v^j|^2 && \DF_n(w) = \int_{T_j}|w^j|^2 \label{eqn:EvEw}\\
E_n(V) = \int_{T_j}|V^j|^2 && E_n(W) = \int_{T_j}|W^j|^2 \label{eqn:EVEW}
\end{align}
and therefore that 
\begin{align}
	4R_n^{-1}&= \DF_{F_n}(u_n,u_n) = 2N(\DF_n(v)+\DF_n(w))\label{eqn:Rnbyvw}\\
	4R_n &=E_{F_n}(J_n,J_n) = 2N\bigl( E_n(V)+E_n(W)\bigr).\label{eqn:RnbyVW}
	\end{align}

\begin{figure}
\centering
\begin{tikzpicture}[scale=2, rotate=-22.5]\input{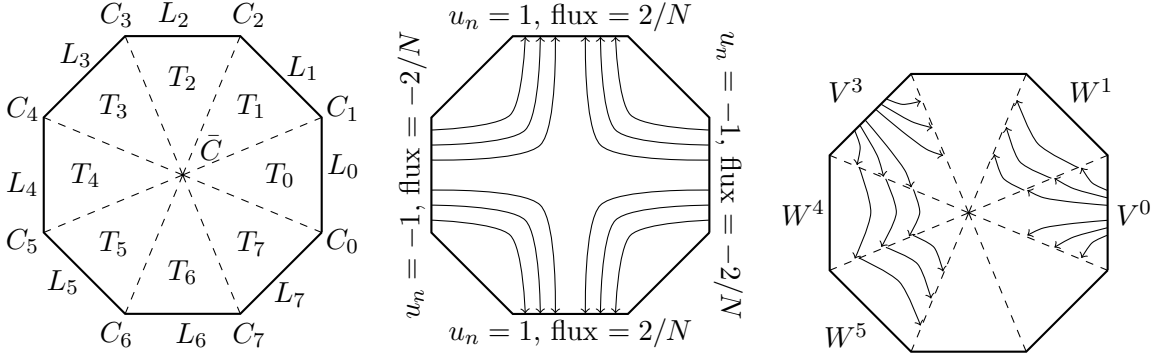}\end{tikzpicture}\hspace{0.1in}\begin{tikzpicture}[scale=2]\draw [thick, rotate = -22.5] (1, 0) -- (0.707, 0.707) -- (0, 1) -- (-0.707, 0.707) -- (-1, 0) -- (-0.707, -0.707) -- (0, -1) -- (0.707, -0.707) -- (1, 0);

\draw [->] plot [smooth] coordinates{ (0.92,0.1) (0.2, 0.2) (0.1, 0.92) }; 
\draw [->] plot [smooth] coordinates{ (0.92,0.2) (0.3, 0.3) (0.2, 0.92) }; 
\draw [->] plot [smooth] coordinates{ (0.92,0.3) (0.4, 0.4) (0.3, 0.92) }; 

\draw [->] plot [smooth] coordinates{ (-0.92,0.1) (-0.2, 0.2) (-0.1, 0.92) }; 
\draw [->] plot [smooth] coordinates{ (-0.92,0.2) (-0.3, 0.3) (-0.2, 0.92) }; 
\draw [->] plot [smooth] coordinates{ (-0.92,0.3) (-0.4, 0.4) (-0.3, 0.92) }; 

\draw [->] plot [smooth] coordinates{ (0.92,-0.1) (0.2, -0.2) (0.1, -0.92) }; 
\draw [->] plot [smooth] coordinates{ (0.92,-0.2) (0.3, -0.3) (0.2, -0.92) }; 
\draw [->] plot [smooth] coordinates{ (0.92,-0.3) (0.4, -0.4) (0.3, -0.92) }; 

\draw [->] plot [smooth] coordinates{ (-0.92,-0.1) (-0.2, -0.2) (-0.1, -0.92) }; 
\draw [->] plot [smooth] coordinates{ (-0.92,-0.2) (-0.3, -0.3) (-0.2, -0.92) }; 
\draw [->] plot [smooth] coordinates{ (-0.92,-0.3) (-0.4, -0.4) (-0.3, -0.92) }; 

\node at (0, 1.05) {$u_n = 1$, flux $= 2/N$};
\node at (0, -1.05) {$u_n = 1$, flux $= 2/N$};

\node [rotate = -90] at (1.05, 0) {$u_n = -1$, flux $= -2/N$};

\node [rotate = 90] at (-1.05, 0) {$u_n = -1$, flux $= -2/N$};\end{tikzpicture}\hspace{0.1in}\begin{tikzpicture}[scale=2]\draw [thick, rotate = -22.5] (1, 0) -- (0.707, 0.707) -- (0, 1) -- (-0.707, 0.707) -- (-1, 0) -- (-0.707, -0.707) -- (0, -1) -- (0.707, -0.707) -- (1, 0);

\draw [dashed, rotate = -22.5] (0, 0) -- (1, 0);
\draw  [dashed, rotate = -22.5] (0, 0) -- (0.707, 0.707);
\draw [dashed, rotate = -22.5]  (0, 0) -- (-1, 0);
\draw [dashed, rotate = -22.5]  (0, 0) -- (-0.707, 0.707);
\draw [dashed, rotate = -22.5] (0, 0) -- (0, -1);
\draw  [dashed, rotate = -22.5] (0, 0) -- (0.707, -0.707);
\draw  [dashed, rotate = -22.5] (0, 0) -- (-0.707, -0.707);
\draw [dashed, rotate = -22.5]  (0, 0) -- (0, 1);

\draw [->] plot [smooth] coordinates{ (0.92, 0.05) (0.7, 0.07) (0.5, 0.1) (0.39, 0.15) }; 
\draw [->] plot [smooth] coordinates{ (0.92, 0.1) (0.7, 0.16) (0.57, 0.23) }; 
\draw [->] plot [smooth] coordinates{ (0.92, 0.15) (0.82, 0.2) (0.75, 0.3) }; 
\draw [->] plot [smooth] coordinates{ (0.92, -0.05) (0.7, -0.07) (0.5, -0.1) (0.39, -0.15) }; 
\draw [->] plot [smooth] coordinates{ (0.92, -0.1) (0.7, -0.16) (0.57, -0.23) }; 
\draw [->] plot [smooth] coordinates{ (0.92, -0.15) (0.82, -0.2) (0.75, -0.3) }; 

\draw [->, rotate = 135] plot [smooth] coordinates{ (0.92, 0.05) (0.7, 0.07) (0.5, 0.1) (0.39, 0.15) }; 
\draw [->, rotate = 135] plot [smooth] coordinates{ (0.92, 0.1) (0.7, 0.16) (0.57, 0.23) }; 
\draw [->, rotate = 135] plot [smooth] coordinates{ (0.92, 0.15) (0.82, 0.2) (0.75, 0.3) }; 
\draw [->, rotate = 135] plot [smooth] coordinates{ (0.92, -0.05) (0.7, -0.07) (0.5, -0.1) (0.39, -0.15) }; 
\draw [->, rotate = 135] plot [smooth] coordinates{ (0.92, -0.1) (0.7, -0.16) (0.57, -0.23) }; 
\draw [->, rotate = 135] plot [smooth] coordinates{ (0.92, -0.15) (0.82, -0.2) (0.75, -0.3) }; 

\draw [->] plot [smooth] coordinates {  (0.37, 0.15) (0.23, 0.23) (0.17, 0.39)};
\draw [->] plot [smooth] coordinates {  (0.55, 0.23) (0.34, 0.34) (0.25, 0.57)};
\draw [->] plot [smooth] coordinates {  (0.73, 0.3) (0.46, 0.46) (0.32, 0.75)};

\draw [->, rotate = 180] plot [smooth] coordinates {  (0.37, 0.15) (0.23, 0.23) (0.17, 0.39)};
\draw [->, rotate = 180] plot [smooth] coordinates {  (0.55, 0.23) (0.34, 0.34) (0.25, 0.57)};
\draw [->, rotate = 180] plot [smooth] coordinates {  (0.73, 0.3) (0.46, 0.46) (0.32, 0.75)};

\draw [->, rotate = 135] plot [smooth] coordinates {  (0.37, 0.15) (0.23, 0.23) (0.17, 0.39)};
\draw [->, rotate = 135] plot [smooth] coordinates {  (0.55, 0.23) (0.34, 0.34) (0.25, 0.57)};
\draw [->, rotate = 135] plot [smooth] coordinates {  (0.73, 0.3) (0.46, 0.46) (0.32, 0.75)};

\node at (1.1, 0) {$V^0$};
\node at (-1.1, 0) {$W^4$};
\node at (0.815, 0.815) {$W^1$};
\node at (-0.815, 0.815) {$V^3$};
\node at (-0.815, -0.815) {$W^5$};\end{tikzpicture}
\caption{For $N=2$: Decomposition of $F_0$ into sectors $T_j$ (left), General current flow lines for $J_n$  (middle),  and examples of $V^j$ and $W^j$ vector fields (right)} \label{fig:currentconstruction}

\end{figure}

\begin{lemma}\label{lem:VorthogW}
For any $j\in\Lambda(N)$, $\int_{T_j} \nabla v^j\cdot \nabla w^j =\int_{T_j} V^j\cdot W^j =0$.
\end{lemma}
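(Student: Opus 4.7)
The plan is to exploit a reflection symmetry within each sector. Let $\sigma_j(z)=e^{ij\pi/N}\bar z$; this is reflection across the ray through the origin at angle $j\pi/(2N)$, which is the angular bisector of $T_j$. It is an orthogonal involution preserving $T_j$ with unit Jacobian, and its (constant) derivative $D\sigma_j$ is a symmetric orthogonal matrix with $D\sigma_j^2=I$.

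The first step is to show that $v^j$ is even and $w^j$ is odd under $\sigma_j$. A direct computation gives $\theta^{-j}\sigma_j(z)=e^{ij\pi/(2N)}\bar z$, so by conjugation symmetry of $u_n$ (Lemma~\ref{lem:Jconj}),
\[
v^j(\sigma_j z)=u_n\bigl(e^{ij\pi/(2N)}\bar z\bigr)=u_n\bigl(e^{-ij\pi/(2N)}z\bigr)=v^j(z).
\]
Similarly $\theta^{-j+1}\sigma_j(z)=e^{i(j+1)\pi/(2N)}\bar z$, and applying Lemma~\ref{lem:Jconj} followed by Lemma~\ref{lem:Jtheta} (in the form $u_n\circ\theta^{-2}=-u_n$) yields
\[
w^j(\sigma_j z)=u_n\bigl(e^{-i(j+1)\pi/(2N)}z\bigr)=u_n\bigl(\theta^{-2}\theta^{-j+1}z\bigr)=-u_n\bigl(\theta^{-j+1}z\bigr)=-w^j(z).
\]

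Next I differentiate these parity relations. Using that $D\sigma_j$ is symmetric with $D\sigma_j^2=I$, the chain rule gives $\nabla v^j(\sigma_j z)=D\sigma_j\,\nabla v^j(z)$ and $\nabla w^j(\sigma_j z)=-D\sigma_j\,\nabla w^j(z)$. Orthogonality of $D\sigma_j$ then yields, pointwise on $T_j$,
\[
(\nabla v^j\cdot\nabla w^j)(\sigma_j z)=-\nabla v^j(z)\cdot\nabla w^j(z),
\]
so the integrand is odd under the measure-preserving involution $\sigma_j$ of $T_j$, and $\int_{T_j}\nabla v^j\cdot\nabla w^j=0$. The current statement follows from the same reflection argument: Lemmas~\ref{lem:Jtheta}--\ref{lem:Jconj} give the analogous parity relations $V^j\circ\sigma_j=D\sigma_j V^j$ and $W^j\circ\sigma_j=-D\sigma_j W^j$ (which is visible either directly from $J_n$ or from the identifications $V^j=R_n\nabla v^j$, $W^j=R_n\nabla w^j$ via the same pullback used to interpret Lemma~\ref{lem:Jtheta}), and orthogonality of $D\sigma_j$ again forces the dot product to be odd under $\sigma_j$, giving $\int_{T_j}V^j\cdot W^j=0$.

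The only delicate point is the bookkeeping in the first step: the composition $\theta^{-j+1}\circ\sigma_j$ shifts the exponent by one unit compared to $\theta^{-j}\circ\sigma_j$, and it is precisely this extra $\theta^{-2}$ factor in the $w^j$ computation that produces the decisive sign difference between the parities of $v^j$ and $w^j$, forcing the cross term to vanish.
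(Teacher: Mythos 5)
Your proof is correct and is essentially the paper's argument: your bisector reflection $\sigma_j=\theta^{2j}\circ(\,\overline{\,\cdot\,}\,)$ is just the rotation-conjugate of complex conjugation, so proving evenness of $v^j,V^j$ and oddness of $w^j,W^j$ under $\sigma_j$ for each $j$ is the same as the paper's reduction to $j=0$ followed by the conjugation symmetry, with the sign coming from $u_n\circ\theta^{\pm2}=-u_n$ in both cases. Your explicit tracking of $D\sigma_j$ and the separate treatment of the gradient and current integrands is a slightly more careful rendering of the same idea, not a different route.
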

\begin{proof}
By rotational symmetry it is enough to verify this for $j=0$. Since $J_n=R_nu_n$, we have $\nabla v^j=R_nV^j$ and $\nabla w^j=R_nW^j$, so we work only with $V^0$ and $W^0$.
The sector $T_0$ is symmetrical under complex conjugation, and using  Lemmas~\ref{lem:Jtheta} and~\ref{lem:Jconj} we have
\begin{gather}
	V^0(\bar{z})= J_n(\bar{z}) = J_n(z)=V^0(z), \label{eqn:V0sym}\\
	W^0(\bar{z})
		= J_n\circ \theta(\bar{z}) 
		=-J_n\circ\theta^{-1}(\bar{z})
		= -J_n(\overline{\theta(z)}) 
		= -J_n(\theta(z)) = -W^0(z). \label{eqn:W0antisym}
	\end{gather}
Thus $V^0\cdot W^0(\bar{z}) = -V^0\cdot W^0(z)$ and the result follows.
\end{proof}

In addition to being orthogonal, the vector fields $V^j$ and $W^j$ have the property that they can easily be glued together to form currents on $F_n$. Recall that to be a current a vector field $J$ must be $L^2$ on the domain and satisfy $\nabla\cdot J=0$.

\begin{lemma}\label{lem:gluingVW}
If $J$ is a vector field such that $J|_{T_l}=\alpha_l V^l+\beta_l W^l$ and $\alpha_{l+1}+\alpha_l=\beta_{l+1}-\beta_l$ for each $l$, then $J$ is a current.
\end{lemma}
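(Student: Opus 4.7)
The plan is to verify the two defining conditions for $J$ to be a current: $J\in L^2(F_n)$ and $\nabla\cdot J=0$ in the weak sense. The $L^2$ bound is immediate because each $V^l, W^l$ is obtained from $J_n\in L^2(F_n)$ by a rotation, which is an isometry. Likewise, since $V^l$ and $W^l$ are push-forwards of the divergence-free field $J_n$ under rotations, they are divergence-free on the interior of each sector $T_l$. The only remaining issue is therefore the weak divergence across the shared internal boundaries between adjacent sectors, which are the rays from the origin to $C_{l+1}$ for each $l\in\Lambda(N)$.

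I will apply Lemma~\ref{lem:potandcurglue}(ii) locally at each such interface, which reduces the problem to checking that the normal components of $J|_{T_l}$ and $J|_{T_{l+1}}$ agree pointwise on the shared ray. The rotational symmetry of the construction means it suffices to verify this on a single interface; I choose the ray from $0$ to $C_1$, between $T_0$ and $T_1$. Interpreted as global vector fields on $F_n$, $V^l=\theta^l_\ast J_n$ and $W^l=\theta^{l-1}_\ast J_n$, so in particular $W^{l+1}=V^l$. After translating by $\theta^{-l}$, the matching condition on the ray $0C_{l+1}$ becomes an equality of normal components of $\alpha_l J_n + \beta_l\,\theta^{-1}_\ast J_n$ and $\alpha_{l+1}\,\theta_\ast J_n + \beta_{l+1}J_n$ on the canonical ray $0C_1$.

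The key computation exploits two coincidences on this ray. Writing $z=re^{i\pi/(4N)}$, one has $\theta^{-1}z=\bar z$ (because reflection in the real axis sends $C_1$ to $C_0$), and $\theta z=\rho z$ where $\rho$ is reflection in the bisector line at angle $\pi/(2N)$. The first coincidence lets me invoke Lemma~\ref{lem:Jconj} directly; the second requires an auxiliary reflection symmetry of $J_n$, namely $J_n(\rho z)=-e^{i\pi/N}\overline{J_n(z)}$. I will derive this by verifying that $\rho$ sends $L_j$ to $L_{2-j}$ and hence swaps the two families of edges comprising $A_n$ and $B_n$; the uniqueness argument from the proof of Lemma~\ref{lem:Jtheta} then forces the pulled-back current to be $-J_n$. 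A short computation in complex notation then yields the clean pointwise identities $(\theta_\ast J_n)\cdot\hat\nu=-J_n\cdot\hat\nu$ and $(\theta^{-1}_\ast J_n)\cdot\hat\nu=+J_n\cdot\hat\nu$ on the ray, where $\hat\nu$ is the unit normal.

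Substituting these into the matching condition collapses it to $(\alpha_l+\beta_l)(J_n\cdot\hat\nu)=(-\alpha_{l+1}+\beta_{l+1})(J_n\cdot\hat\nu)$, and since $J_n\cdot\hat\nu$ is not identically zero on the ray (a generic fact that follows from non-triviality of the current), this is equivalent to $\alpha_l+\alpha_{l+1}=\beta_{l+1}-\beta_l$, exactly the stated hypothesis. I expect the main obstacle to be bookkeeping the vector-field transformations: although the paper writes $V^j=J_n\circ\theta^{-j}$, for divergence-freeness to be preserved one must interpret this as a true push-forward (introducing the rotational phase factors that make the normal-component identities work), and similarly the reflection symmetries must act on vector components and not merely on position. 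Once the auxiliary reflection symmetry is extracted cleanly from Lemmas~\ref{lem:Jtheta} and~\ref{lem:Jconj}, the remaining algebra is short.
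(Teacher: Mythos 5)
Your proof is correct, but it takes a genuinely different route from the paper's. The paper never computes normal traces: it first records that $V^l-V^{l+1}$, $W^l+W^{l+1}$ and $V^l+W^{l+1}$ are each currents on $T_l\cup T_{l+1}$ (the first two from the symmetries \eqref{eqn:V0sym} and \eqref{eqn:W0antisym} via the cancellation criterion of Lemma~\ref{lem:potandcurglue}(ii), the third because it is the restriction of $J_n$ itself), and then observes that the coefficient hypothesis is exactly what lets one write $J|_{T_l\cup T_{l+1}}=-\alpha_{l+1}(V^l-V^{l+1})+\beta_l(W^l+W^{l+1})+(\alpha_l+\alpha_{l+1})(V^l+W^{l+1})$, so the weak divergence vanishes across each interface with no further computation. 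You instead reduce, again via Lemma~\ref{lem:potandcurglue}(ii), to continuity of the normal component of $J$ across the ray $0C_{l+1}$, rotate back to the ray $0C_1$, and evaluate the traces there using Lemma~\ref{lem:Jconj} together with the reflection $\rho(z)=e^{i\pi/N}\bar z$ in the bisector; your claims check out: $\rho=\theta^2\circ(\text{conjugation})$ does send $L_j$ to $L_{2-j}$ and so swaps $A_n$ and $B_n$, giving $J_n(\rho z)=-e^{i\pi/N}\overline{J_n(z)}$ (which you could also get directly by composing Lemmas~\ref{lem:Jtheta} and~\ref{lem:Jconj}, sparing the uniqueness re-derivation), and on the ray this yields $(\theta_\ast J_n)\cdot\hat\nu=-J_n\cdot\hat\nu$ and $(\theta^{-1}_\ast J_n)\cdot\hat\nu=J_n\cdot\hat\nu$, so the jump collapses to $(\alpha_l+\alpha_{l+1}+\beta_l-\beta_{l+1})\,J_n\cdot\hat\nu$, which vanishes under the hypothesis. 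You are also right that the paper's formulas $V^j=(J_n\circ\theta^{-j})|_{T_j}$ must be read as push-forwards (rotating the vector values, not just the positions) for divergence-freeness and the flux statements to survive; the paper leaves this implicit, and its statements of Lemmas~\ref{lem:Jtheta} and~\ref{lem:Jconj} for $J_n$ are loose in the same way. What each approach buys: the paper's argument is shorter and avoids any phase bookkeeping or extra symmetry; yours makes explicit why $\alpha_l+\alpha_{l+1}=\beta_{l+1}-\beta_l$ is precisely the flux-matching condition, and since the conjugation symmetry forces $J_n$ to carry nonzero flux (of magnitude $1/N$) across each dividing ray, it even gives the converse implication -- though note that the nonvanishing of $J_n\cdot\hat\nu$ is not needed for the direction the lemma actually asserts.
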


\begin{proof}
The fields $V^l$ and $W^l$ are the restriction of currents to the sets $T_l$, thus $J$ is an $L^2$ function on $F_n$ for each $l$. To see the given linear combination is a current we must verify $\nabla\cdot J=0$ using  symmetry considerations that imply the cancellation of the weak divergences as in Lemma~\ref{lem:potandcurglue}(ii).

The symmetry of $V^0$ in~\eqref{eqn:V0sym} shows that $\nabla\cdot V^0$ and $-\nabla\cdot V^1$ cancel on $T_0\cap T_1$, so $V^0-V^1$ is a current on $T_0\cup T_1$.  Similarly, the antisymmetry of $W^0$ in~\eqref{eqn:W0antisym} shows the fluxes of $W^0$ and $W^1$ cancel on $T_0\cap T_1$, so $W^0+W^1$ is a current on $T_0\cup T_1$.  In addition we note that $V^0+W^1=J_n|_{T_0\cup T_1}$ is the restriction of the optimal current and is hence a current on $T_0\cup T_1$.  Combining these with the definition~\eqref{eqn:VjandWj} we see that each of $V^l-V^{l+1}$, $W^l+W^{l+1}$ and $V^l+W^{l+1}$ are currents on $T_l\cup T_{l+1}$ as they are obtained from the $l=0$ case by rotations.

Similarly, the antisymmetry of $W^0$ in~\eqref{eqn:W0antisym} shows the fluxes of $W^0$ and $W^1$ cancel on $T_0\cap T_1$, so $W^0+W^1$ is a current on $T_0\cup T_1$.  In addition we note that $V^1+W^1=J_n|_{T_0\cup T_1}$ is the restriction of the optimal current and is hence a current on $T_0\cup T_1$.  Combining these with the definition~\eqref{eqn:VjandWj} we see that each of $V^l-V^{l+1}$, $W^l+W^{l+1}$ and $V^l+W^{l+1}$ are currents on $T_l\cup T_{l+1}$ as they are obtained from the $l=0$ case by rotations.

The divergence $\nabla\cdot J$ then vanishes on the common boundary of $T_l$ and $T_{l+1}$ by writing $J|_{T_l\cup T_{l+1}}$ as the linear combination $-\alpha_{l+1}(V^l-V^{l+1})+\beta_l(W^l+W^{l+1})+(\alpha_l+\alpha_{l+1})(V^l+W^{l+1})$.
\end{proof}

We will need currents with specified non-zero fluxes on the three sides at which cells join and zero flux on the other sides. The relevant sides were determined in Section~\ref{ssec:graphs}; they are those which contain a vertex of $G_0$.

\begin{prop}\label{prop:Fncurr}
If $I_j$, $j=0,N,3N-1$ satisfy $\sum_{0,N,3N-1} I_j=0$ then there is a current $J$ on $F_n$ with flux $I_j$ on $L_j\cap F_n$ for $j=0,N,3N-1$ and zero on all other $L_j\cap F_n$ and that has energy
\begin{equation*}
	E_{F_n}(J,J)
	\leq \Bigl(\frac{N^2}4 E(V) + \frac{N^2}{18}(11N-8) E(W)\Bigr)\sum_{0,N,3N-1} I_j^2
	\leq \frac{11}9 N^2 R_n \sum_{0,N,3N-1} I_j^2
	\end{equation*}
\end{prop}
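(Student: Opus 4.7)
The plan is to construct $J$ by the Barlow-Bass style superposition of Lemma \ref{lem:gluingVW}: set $J|_{T_l} = \alpha_l V^l + \beta_l W^l$ on each sector and determine $\alpha_l, \beta_l$ to meet the flux conditions while satisfying the gluing compatibility. First I would compute the flux contributions of $V^l$ and $W^l$ through the outer side $L_l \cap F_n$ of $T_l$. Interpreting $V^l, W^l$ as rotated restrictions of $J_n$, the flux integrals transform covariantly under rotation, so the flux of $V^l$ through $L_l$ equals that of $J_n$ through $L_0 \cap F_n$, which is $-2/N$ (since $L_0 \subset A_n$), and the flux of $W^l$ through $L_l$ equals that of $J_n$ through $L_1 \cap F_n$, which is $0$ (odd-indexed side, zero Neumann data). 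Thus the flux of $J$ through $L_l$ is $-2\alpha_l/N$, so I set $\alpha_l = -(N/2) I_l$ with $I_l = 0$ for $l \notin \{0, N, 3N-1\}$.

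Next I would solve the recurrence $\beta_{l+1} - \beta_l = \alpha_{l+1} + \alpha_l$ from Lemma \ref{lem:gluingVW}. Writing $\beta_l = \beta_0 + \gamma_l$ with $\gamma_l = \alpha_0 + 2\alpha_1 + \cdots + 2\alpha_{l-1} + \alpha_l$, the cyclic closure $\gamma_{4N} = 0$ reduces exactly to $\sum_l \alpha_l = 0$, which is the hypothesis $\sum_j I_j = 0$. Since only three $\alpha_l$ are nonzero, $\gamma_l$ is piecewise constant, taking the values $0, \alpha_0, \alpha_0 + \alpha_N, \alpha_0 + 2\alpha_N, \alpha_0 + 2\alpha_N + \alpha_{3N-1}, \alpha_0 + 2\alpha_N + 2\alpha_{3N-1}$ on arcs of respective lengths $1, N-1, 1, 2N-2, 1, N$. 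The remaining free parameter $\beta_0$ would be chosen so that $\sum_l \beta_l = 0$, which minimizes $\sum_l \beta_l^2$ and gives $\min \sum \beta_l^2 = \sum_l \gamma_l^2 - (\sum_l \gamma_l)^2/(4N)$.

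By Lemma \ref{lem:VorthogW} the cross terms vanish and by \eqref{eqn:EVEW} the $V$- and $W$-energies on each sector are independent of $l$, hence
\[
E_{F_n}(J, J) = E(V) \sum_l \alpha_l^2 + E(W) \sum_l \beta_l^2.
\]
The $V$-term is immediate: $\sum \alpha_l^2 = \alpha_0^2 + \alpha_N^2 + \alpha_{3N-1}^2 = \tfrac{N^2}{4} \sum_j I_j^2$. For the $W$-term, direct expansion of the piecewise-constant values yields
\[
\sum_l \gamma_l^2 = (2N-1)(\alpha_0^2 + \alpha_N^2 + \alpha_{3N-1}^2) - (4N-4) \alpha_N \alpha_{3N-1}, \qquad \sum_l \gamma_l = 2N \alpha_N - (2N-2) \alpha_{3N-1}.
\]
After substituting $\alpha_j = -(N/2) I_j$ and $I_0 = -I_N - I_{3N-1}$, the desired inequality $\sum_l \beta_l^2 \leq \tfrac{N^2(11N-8)}{18} \sum_j I_j^2$ reduces to the nonnegativity of a quadratic form in $(I_N, I_{3N-1})$, which (after clearing denominators) is equivalent to the cubic inequality $40N^3 - 122N^2 + 115N - 42 \geq 0$ for $N \geq 2$; this holds because the cubic is positive at $N = 2$ and its derivative is positive for $N \geq 2$.

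The final inequality of the proposition then follows since $\tfrac{N^2}{4} \leq \tfrac{N^2(11N-8)}{18}$ for $N \geq 2$, so combining both coefficients under the larger one and using $E(V) + E(W) = 2R_n/N$ from \eqref{eqn:RnbyVW} together with $11N - 8 \leq 11N$ gives $E_{F_n}(J,J) \leq \tfrac{N(11N-8)R_n}{9} \sum I_j^2 \leq \tfrac{11}{9} N^2 R_n \sum I_j^2$. The main obstacle is the bookkeeping for $\sum \gamma_l^2$ with its unequal arc lengths and the subsequent verification of the quadratic-form inequality; both are elementary but require careful tracking of the coefficients.
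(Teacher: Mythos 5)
Your proposal is correct and follows essentially the same route as the paper: the same sector ansatz $J|_{T_l}=\alpha_l V^l+\beta_l W^l$ with $\alpha_l=-\tfrac N2 I_l$, the same flux values for $V^l,W^l$, the same compatibility recurrence from Lemma~\ref{lem:gluingVW}, and the same orthogonality/energy bookkeeping via Lemma~\ref{lem:VorthogW} and \eqref{eqn:EVEW}. The only (harmless) differences are in the bookkeeping: you fix the free additive constant in $\beta_l$ by the mean-zero, energy-minimizing normalization rather than the paper's explicit table of $\beta_j$, and you verify the coefficient $\tfrac{N^2}{18}(11N-8)$ through a quadratic-form discriminant reducing to the cubic $40N^3-122N^2+115N-42\ge 0$ (which checks out, as do your formulas for $\sum_l\gamma_l$ and $\sum_l\gamma_l^2$), whereas the paper expands directly and uses $2I_0I_j\le\tfrac32 I_0^2+\tfrac23 I_j^2$.
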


\begin{proof}
Write $\Lambda'(N)=\{0,N,3N-1\}$.  Define coefficients $\beta_j$ by
\begin{align*}
	\beta_j= \begin{cases}
		I_N-I_{3N-1} &\text{ if $j=0$}\\
		I_{3N-1}-I_0 &\text{ if $j=N$}\\
		I_0 - I_N &\text{ if $j=3N-1$}
		\end{cases}
	&& 
	\beta_j= \begin{cases}
		2I_N-2I_0 &\text{ if $1\leq j\leq N-1$}\\
		2I_{3N-1}-2I_N &\text{ if $N+1\leq j\leq 3N-2$}\\
		2I_0 - 2I_{3N-1} &\text{ if $3N\leq j\leq 4N-1$}
		\end{cases}
	\end{align*}
and let
\begin{equation*}
	J= -\frac N2 \sum_{j\in\Lambda'} I_j V^j + \frac N6\sum_{j\in\Lambda} \beta_jW^j.
	\end{equation*}
Then $J$ is of the form $\sum_j \alpha_j V^j+\beta_jW^j$ with $\alpha_j=-\frac N2 I_j$ for $j\in\Lambda'$ and zero otherwise.  One can verify the conditions of Lemma~\ref{lem:gluingVW}, so $J$ is a current.  Moreover, all of the $W^j$ have zero flux through $L_j\cap F_n$, and $V^j$ has flux $-\frac2N$ through $L_j\cap F_n$, thus the flux of $J$ is as stated.

By the orthogonality in Lemma~\ref{lem:VorthogW} and~\eqref{eqn:EVEW},
\begin{equation*}
	E_{F_n}(J,J)
	=\frac{N^2}4 E(V)\sum_{\Lambda'} I_j^2 + \frac{N^2}{36} E(W)\sum_{\Lambda} \beta_j^2.
	\end{equation*}
It is straightforward to compute
\begin{align*}
	\sum_{\Lambda} \beta_j^2
	&=(4N-3)(I_N-I_0)^2 + (8N-7)(I_{3N-1}-I_N)^2 + (4N+1)(I_0-I_{3N-1})^2\\
	&= (16N-9)I_0^2+ (20N-17)I_N^2+(20N-13)I_{3N-1}^2 + 4(N-1)I_0I_N + 4(N-2)I_0I_{3N-1}
	\end{align*}
where we used $-2I_NI_{3N-1}=I_0^2+I_N^2+I_{3N-1}^2+2I_0I_N+2I_0I_{3N-1}$, which was obtained by squaring $\sum_{\Lambda'}I_j=0$. Then the bound $2I_0I_j\leq \frac32I_0^2+\frac23I_j^2$ for $j=N,3N-1$ gives, also using $N\geq2$,
\begin{equation*}
	\sum_{\Lambda} \beta_j^2
	\leq (22N-18)I_0^2 + (22N-19)I_N^2 + (22N-16)I_{3N-1}^2.
	\end{equation*}
This gives the energy estimate. The bound by $R_n$ is from the expression following~\eqref{eqn:EVEW}.
\end{proof}

Having established these results on currents, we turn to considering potentials, which will be built from the functions $v^j$ and $w^j$ so as to have specified boundary data at those $C_j$ which are vertices of the graph $D_0$ from Section~\ref{ssec:graphs}.

\begin{lemma}\label{lem:ctspot}
The function which is $v^1+w^1+v^0-w^0$ on $T_0 \cup T_1$ and zero on  $F_n \setminus T_0 \cup T_1$ defines a potential on $F_n$.
\end{lemma}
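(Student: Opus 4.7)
The plan is to verify that the piecewise definition glues into an element of $H^1(F_n)$ via Lemma~\ref{lem:potandcurglue}(i). Writing $f$ for the proposed function, we have $f|_{T_0}=v^0-w^0$, $f|_{T_1}=v^1+w^1$, and $f\equiv 0$ elsewhere. Each of $v^j$, $w^j$ is a composition of $u_n\in H^1(F_n)$ with a rotation that preserves $F_n$, restricted to $T_j$, so $f$ lies in $H^1$ on each sector individually. The three internal interfaces to address are the rays $\ell_0$, $\ell_1$, $\ell_2$ from the origin through $C_0$, $C_1$, $C_2$: the ray $\ell_1$ separates $T_0$ from $T_1$, while $\ell_0$ and $\ell_2$ separate $T_0\cup T_1$ from the sectors where $f$ vanishes.

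The key geometric input is that reflection across $\ell_j$ is the isometry $z\mapsto\theta^{2j-1}(\bar z)$, a composition of the generating symmetries $\theta$ and complex conjugation, hence a symmetry of $F_n$. Any $z\in\ell_j$ is fixed by this reflection, giving $\bar z=\theta^{1-2j}(z)$. Combining with the identities $u_n(\bar z)=u_n(z)$ and $u_n\circ\theta^2=-u_n$ from Lemmas~\ref{lem:Jconj} and~\ref{lem:Jtheta} yields the three trace identities needed. On $\ell_0$, $\theta(z)=\bar z$, so $v^0-w^0=u_n(z)-u_n(\bar z)=0$, matching the zero value on $T_{4N-1}$. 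On $\ell_2$, $\bar z=\theta^{-3}(z)$ gives $\theta^{-1}(z)=\theta^2(\bar z)$ and hence $u_n(\theta^{-1}(z))=-u_n(\bar z)=-u_n(z)$, so $v^1+w^1=0$, matching the zero value on $T_2$. On $\ell_1$, $\theta^{-1}(z)=\bar z$ and $\theta(z)=\theta^2(\bar z)$ give $u_n(\theta(z))=-u_n(z)$ and $u_n(\theta^{-1}(z))=u_n(z)$, so both $v^0-w^0$ and $v^1+w^1$ reduce to $2u_n(z)$.

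With the traces verified, three successive applications of Lemma~\ref{lem:potandcurglue}(i), across $\ell_0$, $\ell_2$, and $\ell_1$ (each an axis of symmetry of $F_n$, so the hypothesis is met), glue the pieces into a function in $H^1(F_n)$. There is no serious obstacle; the entire content of the lemma is packaged in the three trace identities above, which reduce transparently to the rotation and conjugation symmetries of $u_n$.
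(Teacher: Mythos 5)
Your proof is correct, and while it rests on the same two ingredients as the paper's argument — the symmetries $u_n\circ\theta^2=-u_n$ and $u_n(\bar z)=u_n(z)$ of Lemmas~\ref{lem:Jtheta} and~\ref{lem:Jconj}, plus the gluing criterion of Lemma~\ref{lem:potandcurglue}(i) — the route through them is genuinely different. The paper never computes the interface traces directly: it first shows $v^1+w^1+v^0-w^0\in H^1(T_0\cup T_1)$ by observing $v^0+w^1=u_n|_{T_0\cup T_1}$ and composing with the isometry $z\mapsto\theta(\bar z)$ of $T_0\cup T_1$, then extracts the single identity $v^0+w^0=0$ on the ray through $C_1$ from the matching of $u_n$ across that ray combined with $w^1(\theta(\bar z))=-w^0(z)$, and finally transports that identity by $\theta^{-1}$ and by conjugation to get the vanishing on $\ell_0$ and $\ell_2$. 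You instead verify the matching condition on each of the three rays directly, using that the reflection fixing $\ell_j$ is $z\mapsto\theta^{2j-1}(\bar z)$, a symmetry of $F_n$; your three identities (including the common one-sided value $2u_n$ on $\ell_1$) all check out, and this treats the internal interface and the two outer interfaces in one uniform step, at the price of invoking (implicitly) the standard fact that composing an $H^1$ function with the reflection fixing a line does not change its trace on that line — the same level of informality as the paper's suppression of ``a.e.''. One phrase to adjust: ``three successive applications'' of Lemma~\ref{lem:potandcurglue}(i) does not literally parse, since each application of the two-half statement would require the function to be already known $H^1$ on a half of $F_n$ that still contains an unglued ray; what you are really using — exactly as the paper does when it declares the lemma ``applicable in each case'' — is the multi-interface form of the criterion: piecewise $H^1$ on the sectors with one-sided traces agreeing a.e.\ across each of the finitely many interface segments implies membership in $H^1(F_n)$, which follows from the absolutely-continuous-on-lines characterization underlying the lemma's proof. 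With that reading your argument is complete and, if anything, more symmetric and self-contained than the paper's.
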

\begin{proof}
Recall from Section~\ref{ssec:Lipdom} that a function is a potential if it is in $H^2$ on the domain.  Since the given function is the restriction of a harmonic function as in Theorem~\ref{thm:BVPsoln} to both $T_0$ and $T_1$, and is zero on the complement of these, it is in $H^1$ on each of these sets separately.  Moreover, these sets meet along the intersection of $F_n$ with three distinct lines (corresponding to the common boundary of $T_j$ and $T_{j+1}$ for $j=-1,0,1$), so Lemma~\ref{lem:potandcurglue}(i) is applicable in each case and we see the function is in $H^1(F_n)$ with $L^2$ boundary values if and only if the pieces agree a.e.\ on the common boundary. In what follows we suppress the ``a.e.'' to avoid repetition.

The proof that the pieces match uses the symmetries $v^0(\bar{z})=v^0(z)$ and $w^0(\bar{z})=-w^0(z)$, which follow from Lemma~\ref{lem:Jconj} in the same manner as the proofs of~\eqref{eqn:V0sym} and~\eqref{eqn:W0antisym}.   Note that $z\mapsto\theta(\bar{z})$ is an isometry of $T_0\cup T_1$ and compute from the symmetries and~\eqref{eqn:vjandwj} that
\begin{gather}
	v^0(\theta(\bar{z})) =v^0(\theta^{-1}(z))=v^1(z), \notag\\
	w^1(\theta(\bar{z}))= w^0(\bar{z}) = -w^0(z). \label{eqn:w1w0antisym}
	\end{gather}

Now observe that $v^0+w^1$ is in $H^1(T_0\cup T_1)$ because it is the restriction of the optimal potential $u_n$ to this set, see~\eqref{eqn:vjandwj}, and this latter is a harmonic function as in Theorem~\ref{thm:BVPsoln}.  Using the preceding it follows that $(v^1-w^0)(z)=(v^0+w^1)(\theta(\bar{z}))$ is in $H^1(T_0\cup T_1)$, and therefore so is $v^1+w^1+v^0-w^0$.

What is more, if $z$ is in the common boundary of $T_0$ and $T_1$ then $\theta(\bar{z})=z$ and thus~\eqref{eqn:w1w0antisym} gives $w^1(z)=-w^0(z)$.  Then $v^0+w^1\in H^1$ at such points implies $v^0(z)=-w^0(z)$, but this says $v^0+w^0$ vanishes  on the common boundary of $T_0$ and $T_1$. Thus $v^1+w^1=v^0+w^0\circ\theta^{-1}$ vanishes on the common boundary of $T_1$ and $T_2$, and $(v^0-w^0)(z)=(v^0+w^0)(\bar{z})$ vanishes on the common boundary of $T_0$ and $T_{-1}$.  Together these show $v^1+w^1+v^0-w^0$  vanishes on the boundary of $T_0\cup T_1$ in $F_n$, so the zero extension to $F_n\setminus(T_0\cup T_1)$ is in $H^2$ and the proof is complete.
\end{proof}

For the following proposition we recall that the harmonic functions from Theorem~\ref{thm:BVPsoln} are continuous on the sets $A_n$ and $B_n$ in $F_n$, thus we may refer to their values at the points $C_j$.

\begin{prop}\label{prop:Fnpot}
Given a function $u$ on $D_0$ that is harmonic at $0$ there is a potential $f$ on $F_n$ having a representative with $f(C_j)=u(C_j)$ for $C_j\in D_0$ and
\begin{equation*}
	\DF_{F_n}(f,f)
		\leq  \bigl(\DF_n(v)+\DF_n(w)\bigr)\sum_{C_j\in D_0} \bigl( u(C_j)-u(0)\bigr)^2
		=\frac2N R_n^{-1} \sum_{C_j\in D_0} \bigl( u(C_j)-u(0)\bigr)^2.
		\end{equation*}
If $f(C_{j+1})=f(C_j)$ for some $j\in\{0,N,3N-1\}$ then $f$ is constant on the edge $L_j\cap F_n$.
\end{prop}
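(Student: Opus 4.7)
The plan is to assemble $f$ from six rotated copies of the tent potential of Lemma~\ref{lem:ctspot}. For each $k\in\Lambda(N)$, I would set $\phi_k(z):=\phi_1(\theta^{1-k}z)$, where $\phi_1:=v^1+w^1+v^0-w^0$ on $T_0\cup T_1$ (extended by zero) is the potential produced by Lemma~\ref{lem:ctspot}. Rotation-invariance of $F_n$ ensures each $\phi_k$ is a potential, and unwinding~\eqref{eqn:vjandwj} gives the explicit formula $\phi_k=(v^k+w^k)+(v^{k-1}-w^{k-1})$ on $T_{k-1}\cup T_k$, zero elsewhere. Using $u_n(0)=0$ (Lemma~\ref{lem:Jtheta} at $z=0$), $u_n(C_0)=u_n(C_1)=-1$, and $u_n(C_2)=1$, a direct computation gives $\phi_k(0)=0$ and $\phi_k(C_j)=-2\delta_{jk}$.

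Writing $S:=\{0,1,N,N+1,3N-1,3N\}$, so that $D_0=\{0\}\cup\{C_k:k\in S\}$, I would set $\alpha_k:=\tfrac12(u(0)-u(C_k))$ and take
\[f:=u(0)+\sum_{k\in S}\alpha_k\phi_k,\]
which satisfies $f(0)=u(0)$ and $f(C_k)=u(C_k)$ for $k\in S$ by the preceding corner values. For the energy, Lemma~\ref{lem:VorthogW} together with~\eqref{eqn:EvEw} gives $\DF_{F_n}(\phi_k,\phi_k)=2(\DF_n(v)+\DF_n(w))$; the only overlapping pairs in $S$ are the adjacent ones $(k,k+1)$, sharing the single sector $T_k$ on which $\phi_k=v^k+w^k$ and $\phi_{k+1}=v^k-w^k$, yielding $\DF_{F_n}(\phi_k,\phi_{k+1})=\DF_n(v)-\DF_n(w)$. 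Since each $\alpha_k$ appears in at most two adjacent pairs in $S$, AM-GM gives $|\sum_{\text{adj}}\alpha_k\alpha_{k+1}|\le\sum_{k\in S}\alpha_k^2$, and combined with $|\DF_n(v)-\DF_n(w)|\le\DF_n(v)+\DF_n(w)$ this produces
\[\DF_{F_n}(f,f)\le 4(\DF_n(v)+\DF_n(w))\sum_{k\in S}\alpha_k^2=(\DF_n(v)+\DF_n(w))\sum_{C_j\in D_0}(u(C_j)-u(0))^2,\]
with the identity $\DF_n(v)+\DF_n(w)=\tfrac{2}{N}R_n^{-1}$ then coming from~\eqref{eqn:Rnbyvw}.

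For the final assertion, if $u(C_j)=u(C_{j+1})$ for some $j\in\{0,N,3N-1\}$ then $\alpha_j=\alpha_{j+1}$, and on $T_j$ only $\phi_j$ and $\phi_{j+1}$ contribute, so
\[f=u(0)+\alpha_j(v^j+w^j)+\alpha_{j+1}(v^j-w^j)=u(0)+2\alpha_jv^j.\]
Since $v^j(z)=u_n(\theta^{-j}z)$ and $\theta^{-j}(L_j)=L_0\subset A_n$ where $u_n\equiv-1$, we have $v^j\equiv-1$ on $L_j\cap F_n$, so $f\equiv u(0)-2\alpha_j=u(C_j)$ there.

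The main subtlety is verifying that each rotated tent $\phi_k$ really is a potential on $F_n$: Lemma~\ref{lem:ctspot} treats only $k=1$, using specific identifications of $v^0,w^0,v^1,w^1$ at the boundaries $T_0|T_{-1}$, $T_0|T_1$, $T_1|T_2$ coming from $\theta$- and conjugation-symmetries of $u_n$. These identifications are rotation-equivariant, so the same gluing argument transports to the analogous boundaries around $T_k$, but this transport must be stated carefully so that both the $H^1$-matching and the zero trace on the outer radial segments survive the rotation. Once this is in place, the factor $4$ in the energy bound is tight precisely when $N=2$, because then the extra adjacency $(1,N)=(1,2)$ forces $\alpha_1,\alpha_2$ to each appear in two pairs; for $N\ge 3$ the same argument gives the stronger constant $3$.
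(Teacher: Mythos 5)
Your proposal is correct and is essentially the paper's own proof: your $f=u(0)+\sum_{k\in S}\alpha_k\phi_k$ with $\alpha_k=\tfrac12\bigl(u(0)-u(C_k)\bigr)$ is exactly the paper's $u(0)+\tfrac12\sum_j z_j\bigl(w^{j-1}-v^{j-1}-v^j-w^j\bigr)$, and the energy bound and final constancy claim rest on the same ingredients (rotates of Lemma~\ref{lem:ctspot}, the orthogonality of Lemma~\ref{lem:VorthogW} with~\eqref{eqn:EvEw}--\eqref{eqn:Rnbyvw}, and the constancy of $v^j$ on $L_j\cap F_n$), with only the bookkeeping differing (per-tent cross terms plus AM--GM versus the paper's per-sector expansion), which gives the identical constant. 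The sole blemish is your appeal to $u_n(0)=0$ and $\phi_k(0)=0$, which is unnecessary for the proposition and is vacuous for $n\geq1$ since $0\notin F_n$.
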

\begin{proof}
Let  $z_j=u(C_j)-u(0)$ for $C_j\in D_0$ and $z_j=0$ otherwise.  With indices modulo $4N$, define
\begin{equation*}
	f= u(0)+\frac12 \sum_{j\in\Lambda(N)} z_j( w^{j-1} - v^{j-1}-  v^j -w^j)
	=u(0)+ \frac12 \sum_j (z_{j+1}-z_j)w^j -  (z_{j+1}+z_j) v^j.
	\end{equation*}
This is a linear combination of rotations of the function in Lemma~\ref{lem:ctspot}, so it is a potential.  Using $w^{j-1}(C_j)=1$, $v^{j-1}(C_j)=w^{j-1}(C_j)=w^j(C_j)=-1$ and  $v^l(C_j)=w^l(C_j)=0$ for $l\neq j,j+1$ we easily see $f(C_j)=u(C_j)$ for $C_j\in D_0$.

From the orthogonality in Lemma~\ref{lem:VorthogW} and~\eqref{eqn:EvEw} we have
\begin{equation*}
	\DF_{F_n}(f,f)
	=\frac14 \DF_n(v)   \sum_{j\in\Lambda'''} (z_{j+1}+z_j)^2 + \frac14 \DF_n(w) \sum_{j\in\Lambda'''} (z_{j+1}-z_j)^2
	\leq (\DF_n(v)+\DF_n(w)) \sum_j z_j^2
	\end{equation*}
where we  used $(z_{j+1}\pm z_j)^2\leq 2z_{j+1}^2+2z_j^2$.  The remaining part of the asserted energy bound is from from~\eqref{eqn:Rnbyvw}.

Finally, suppose there is $j\in\{0,N,3N-1\}$ for which $f(C_{j+1})=f(C_j)$.  Then $z_{j+1}=z_j$, and on the edge $L_j\cap F_n$ we have  $f=u(0)-(z_{j+1}+z_j)v^j$.  However~\eqref{eqn:vjandwj} says  $v^j$ comes from the restriction of $u_n$ to $L_0$, where $u_n\equiv1$, so $v^j$ is constant on $L_j\cap F_n$ and so is $f$.
\end{proof}

\section{Bounds}

Our main resistance estimate is obtained from the results of the previous sections by constructing a feasible current and potential on $F_{m+n}$.  We use the optimal current on $G_m$ and optimal potential on $D_m$ to define boundary data on $m$-cells that are copies of $F_n$, and then build matching currents and potentials from Propositions~\ref{prop:Fncurr} and~\ref{prop:Fnpot} to prove the following theorem.

\begin{theorem}\label{thm:mainest}
For $n\geq0$ and $m\geq1$
\begin{equation*}
	\frac9{44N} R_0^{-1}  R_n R_m \leq R_{m+n}\leq  \frac{44N}9   R_0^{-1}R_n R_m.
	\end{equation*}
\end{theorem}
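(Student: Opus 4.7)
The plan is to exploit the decomposition $F_{m+n}=\bigcup_{|w|=m}\phi_w(F_n)$ into $(4N)^m$ \emph{$m$-cells}, each a scaled translate of $F_n$. Because the planar Dirichlet form is invariant under uniform scaling $\phi(z)=rz$ (the Jacobian factor $r^{2}$ exactly compensates $|\nabla(u\circ\phi^{-1})|^{2}=r^{-2}|\nabla u|^{2}\circ\phi^{-1}$), every $m$-cell carries the same internal resistance structure as $F_n$, and Propositions~\ref{prop:Fncurr} and~\ref{prop:Fnpot} apply cell-by-cell with the same constants. I will use the optimal current on $G_m$ and the optimal potential on $D_m$ to specify boundary data on each cell, extend via the two propositions, and glue using Lemma~\ref{lem:potandcurglue}.

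For the upper bound begin with $\tilde I_m^G$, the optimal current on $G_m$ with $E_{G_m}(\tilde I_m^G,\tilde I_m^G)=R_m^G$. At each $m$-cell center the three outgoing $G_m$-edges carry fluxes $I_0,I_N,I_{3N-1}$ which sum to zero by Kirchhoff's law at the interior vertex. Proposition~\ref{prop:Fncurr} then produces a cell-wise current of energy at most $\tfrac{11}{9}N^{2}R_n(I_0^{2}+I_N^{2}+I_{3N-1}^{2})$ with the prescribed fluxes on the three distinguished sides $\phi_w(L_0),\phi_w(L_N),\phi_w(L_{3N-1})$. On any shared side between neighboring cells, Kirchhoff's law at the mid-side vertex of $G_m$ forces the two prescribed fluxes to be equal and opposite, so Lemma~\ref{lem:potandcurglue}(ii) glues the cell-wise pieces into a current $J$ on $F_{m+n}$ whose boundary fluxes reproduce those of $\tilde I_m^G$. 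Each edge of $G_m$ emanates from exactly one cell center, so summing the cell-wise bounds gives $E_{F_{m+n}}(J,J)\leq \tfrac{11}{9}N^{2}R_n R_m^G$, and Thomson's principle yields $R_{m+n}\leq \tfrac{11}{9}N^{2}R_n R_m^G$.

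The lower bound is dual. Apply Proposition~\ref{prop:Fnpot} in each $m$-cell with data $u=\tilde u_m^D\circ\phi_w$, which is harmonic at $0$ since $\tilde u_m^D$ is harmonic at the cell's center in $D_m$; this gives a potential $f^{(w)}$ matching $\tilde u_m^D$ at the seven $D_m$-vertices of the cell, with energy $\leq \tfrac{2}{NR_n}\sum_{C_j\in D_0}(u(C_j)-u(0))^{2}$. Substituting $v^{l}|_{L_l}=u_n|_{L_0}=-1$ and $w^{l}|_{L_l}=u_n|_{L_1}$ into the explicit formula from the proof of Proposition~\ref{prop:Fnpot} reduces the trace on each distinguished side to
\begin{equation*}
f^{(w)}|_{L_l}=\tfrac12\bigl(u(C_l)+u(C_{l+1})\bigr)+\tfrac12\bigl(u(C_{l+1})-u(C_l)\bigr)\,u_n|_{L_1},
\end{equation*}
which depends only on the two shared corner values, so neighboring cells produce identical traces and Lemma~\ref{lem:potandcurglue}(i) glues them into a feasible $f$ on $F_{m+n}$. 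The constancy-on-edges clause of Proposition~\ref{prop:Fnpot} handles sides whose two corners carry equal $D_m$-values, ensuring $f=0$ on $A_{m+n}$ and $f=1$ on $B_{m+n}$. Each $D_m$-edge belongs to exactly one cell, so the cell-wise sums telescope to give $\cal E_{F_{m+n}}(f,f)\leq \tfrac{2}{NR_n}\cdot 2\cal E_{D_m}(\tilde u_m^D,\tilde u_m^D)=\tfrac{4}{NR_n R_m^D}$, and the Dirichlet principle produces $R_{m+n}\geq \tfrac14 NR_n R_m^D$.

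To eliminate $R_m^G$ and $R_m^D$ I specialize both inequalities to $n=0$, where each $m$-cell is simply $F_0$: this yields $R_m\leq \tfrac{11}{9}N^{2}R_0 R_m^G$ and $R_m\geq \tfrac14 NR_0 R_m^D$, and combined with $R_m^G=2R_m^D$ from Lemma~\ref{lem:RGRD} this pins $R_m^D$ between constant multiples of $R_m/R_0$. Substituting these bounds for $R_m^G$ and $R_m^D$ into the two estimates produces the claimed two-sided inequality; the exact constants $\tfrac{9}{44N}$ and $\tfrac{44N}{9}$ follow by using the sharper form $\tfrac{N(11N-8)}{9}R_n$ of Proposition~\ref{prop:Fncurr} together with the assumption $N\geq 2$. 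The main obstacle I anticipate is the rigorous justification of the trace identity above: one must verify that under the gluing prescribed by the maps $\phi_w$, the rigid motion identifying $L_{j_1}$ in one cell with $L_{j_2}$ in its neighbor pulls back the shape function $u_n|_{L_1}$ in one local frame to the shape function $u_n|_{L_1}$ in the other, and this requires tracking the rotations and reflections built into the definition of $\psi_w$ together with the symmetries $u_n(\bar z)=u_n(z)$ and $u_n\circ\theta^{2}=-u_n$ established in Lemmas~\ref{lem:Jconj} and~\ref{lem:Jtheta}.
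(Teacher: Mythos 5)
Your construction is the same as the paper's: cell-wise currents from Proposition~\ref{prop:Fncurr} driven by $\tilde I_m^G$, cell-wise potentials from Proposition~\ref{prop:Fnpot} driven by $\tilde u_m^D$, gluing across shared sides, and then eliminating $R_m^G,R_m^D$ via the $n=0$ case and Lemma~\ref{lem:RGRD}. Your explicit trace identity for $f^{(w)}|_{L_l}$ is correct and the orientation issue you flag is resolved exactly by the antisymmetry $w^0(\bar z)=-w^0(z)$: the trace is ``average plus half-difference times an odd shape function,'' so reversing the identification of endpoints flips both factors and leaves the trace unchanged. However, there is a concrete quantitative gap that prevents your argument, as written, from giving the stated constants. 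Since each edge of $D_m$ joins a cell center to a corner of that one cell, the cell-wise sums $\sum_w\sum_{C_j\in D_0}\bigl(u_w(C_j)-u_w(0)\bigr)^2$ equal $\DF_{D_m}(\tilde u_m^D,\tilde u_m^D)$ exactly (the $\tfrac12$ in the definition of $\DF_{D_m}$ only compensates the double count over ordered pairs), not $2\,\DF_{D_m}$. Your extra factor $2$ degrades the lower bound to $R_{m+n}\geq\tfrac{N}{4}R_nR_m^D$ instead of $\tfrac{N}{2}R_nR_m^D$, and feeding that through the $n=0$ elimination yields $\tfrac{9}{88N}$ and $\tfrac{88N}{9}$, not $\tfrac{9}{44N}$ and $\tfrac{44N}{9}$. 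The proposed repair via the sharper form $\tfrac{N^2}{4}E(V)+\tfrac{N^2}{18}(11N-8)E(W)$ of Proposition~\ref{prop:Fncurr} does not restore the stated constants (for $N\geq2$ it gives $\tfrac{9}{8(11N-8)}<\tfrac{9}{44N}$ on the lower side) and is unnecessary: with the correct edge count, the crude $\tfrac{11}{9}N^2R_n$ bound together with $\tfrac{2}{N}R_n^{-1}$ from Proposition~\ref{prop:Fnpot} gives $\tfrac{N}{2}R_nR_m^D\leq R_{m+n}\leq\tfrac{22}{9}N^2R_nR_m^D$ and the stated constants follow directly.

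A smaller point on the current gluing: Lemma~\ref{lem:potandcurglue}(ii) needs the normal traces from the two neighboring cells to cancel in the weak sense on the shared side, which is stronger than having equal and opposite \emph{net} flux, so Kirchhoff's law at the mid-side vertex is not by itself sufficient. The missing observation (which the paper supplies) is that in the construction of Proposition~\ref{prop:Fncurr} the only term with nonzero flux through a given side is a scalar multiple of $V^j$, and by the symmetry $V^0(\bar z)=V^0(z)$ the two cells' contributions on the shared side are multiples of one and the same field; hence equality of net fluxes forces equality of the fields up to sign, and only then does Lemma~\ref{lem:potandcurglue}(ii) apply.
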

\begin{proof}
For fixed $m\geq 1$ let $\tilde{I}_m^G$ be the optimal current on the graph $G_m$ and $\tilde{u}_m^D$ be the optimal potential on the graph $D_m$, both for the sets $A_m$ and $B_m$.  Recall that for each cell we have an address $w=w_1\dotsm w_m$ and a map $\psi_w$ as in Section~\ref{ssec:graphs} so that $\tilde{I}_m^G\circ\psi_w$ is a current on $G_0$ and $\tilde{u}_m^D\circ\psi_w$ is a potential on $D_0$.

Now fix $n\geq 0$ and consider $F_{m+n}$. Then $\psi_w$ maps $F_n$ to the $m$-cell of $F_{m+n}$ with address $w$, and we write $J_w$ for the current from Proposition~\ref{prop:Fncurr} with fluxes from $\tilde{I}_m^G\circ\psi_w$ and $f_w$ for the potential from Proposition~\ref{prop:Fnpot} with boundary data from $\tilde{u}_m^D\circ\psi_w$.  In particular, summing over all words of length $m$ we have from these propositions and the optimality of the current and potential that
\begin{gather}
	E_{F_{m+n}}\Bigl(\sum_w J_w,\sum_w J_w\Bigr)
	= \sum_w E_{F_n}(J_w,J_w)
	\leq \frac{11}9 N^2 R_n E_{G_m}(\tilde{I}_m^G,\tilde{I}_m^G) 
	=  \frac{11}9 N^2 R_n R_m^G \label{eqn:currentbd}\\
	\DF_{F_{m+n}}\Bigl(\sum_w f_w,\sum_w f_w\Bigr)
	= \sum_w \DF_{F_n}(f_w,f_w)
	\leq \frac2N R_n^{-1} \DF_{D_m} (\tilde{u}_m^D,\tilde{u}_m^D)
	=  \frac2N R_n^{-1} (R_m^D)^{-1}. \label{eqn:potentialbd}
	\end{gather}
	
Since $\tilde{I}_m^G$ is a current, its flux through the edges incident at a non-boundary point is zero. Using this fact at the vertex on the center of a side where two $m$-cells meet we see that the net flux of $\sum_w J_w$ through such a side is zero.
What this means for $\sum_w J_w$ is that the currents in the cells that meet on this side are weighted to have equal and opposite flux through the side.  However, examining the construction of $J_w$ it is apparent that the term providing the flux through this side is a  (scaled) copy of $V^j$ from~\eqref{eqn:VjandWj}. Using that $V^j$ is a rotate of $V^0$ and that $V^0(\bar{z})=V^0(z)$ from~\eqref{eqn:V0sym}, we see that all terms in $\sum_w J_w$ that provide flux through the sides where $m$-cells meet are
 multiples of a single vector field.  It follows that the cancellation of the net flux guarantees cancellation of the fields in the sense of Lemma~\ref{lem:potandcurglue}(ii).  Thus we conclude that $\sum_w J_w$ is a current on $F_{m+n}$.  Its net flux through a boundary edge is the same as that of $\tilde{I}_m^G$, so is $-1$ through $A_{m+n}$ and $1$ through $B_{m+n}$.  Hence $\sum_w J_w$ is a feasible current from $A_{m+n}$ to $B_{m+n}$ on $F_{m+n}$, and~\eqref{eqn:currentbd} together with Theorem~\ref{thm:resistfromcurrent} implies
\begin{equation}\label{eqn:Rupper}
	R_{m+n} \leq E_{F_{m+n}}\Bigl(\sum_w J_w,\sum_w J_w\Bigr) \leq  \frac{11}9 N^2 R_n R_m^G.
	\end{equation}

Similarly, we can see that $\sum_w f_w$ is a potential on $F_{m+n}$. Each side where two $m$-cells meet  is the line segment at the intersection of the closures of copies of sectors $T_j$ and $T_{j'}$ under  maps $\psi_w$, $\psi_{w'}$ corresponding to the $m$-cells. We see that $\sum_w f_w$ coincides with $\tilde{u}_m^D$ at the endpoints of this line segment, while along the line it is a linear combination of $v^j$ and $w^j$ as in Proposition~\ref{prop:Fnpot}.  This linear combination depends only on the endpoint values, so is the same on the line from $\psi_w(T_j)$ as on the line from $\psi_{w'}(T_{j'})$. Hence  $\sum_w f_w$ is a potential on $F_{m+n}$.  Since $\tilde{u}_m^D$ is $0$ at all endpoints of sides of cells in $A_{m+n}$ and $1$ at all endpoints of sides in $B_{m+n}$, the final result of Proposition~\ref{prop:Fnpot} ensures $\sum_w f_w$ has value $0$ on $A_{m+n}$ and $1$ on $B_{m+n}$, so is a feasible potential.  Combining this with~\eqref{eqn:potentialbd} and~\eqref{eq:domainresist} gives
\begin{equation}\label{eqn:Rlower}
	R_{m+n}^{-1} \leq \DF_{F_{m+n}}\Bigl(\sum_w f_w,\sum_w f_w\Bigr) \leq \frac2N R_n^{-1} (R_m^D)^{-1}.
	\end{equation}

Our estimates~\eqref{eqn:Rupper} and~\eqref{eqn:Rlower}, together with Lemma~\ref{lem:RGRD},  give for $n\geq 0$, $m\geq1$ that
\begin{equation*}
	\frac N2 R_n R_m^D \leq R_{m+n} \leq \frac{11}9 N^2 R_n R_m^G = \frac{22}9 N^2 R_n R_m^D.
	\end{equation*}
In particular, for $n=0$ we have $R_m^D\leq \frac2N R_0^{-1} R_m$  and $\frac9{22N^2} R_0^{-1}R_m\leq R_m^D$, which may be substituted into the previous expression to obtain the theorem.
\end{proof}

\bibliographystyle{plain}
\bibliography{ms}

\end{document}